\documentclass[11pt,letterpaper]{article}

\usepackage[margin=1in]{geometry}

\usepackage[utf8]{inputenc} 
\usepackage{enumitem}        
\usepackage[T1]{fontenc}    
\usepackage{hyperref}       
\hypersetup{
    colorlinks=true,
    citecolor=blue,
    linkcolor=blue,
    urlcolor=blue,
    breaklinks=true
}
\usepackage{url}            
\usepackage{booktabs}       
\usepackage{amsfonts}       
\usepackage{nicefrac}       
\usepackage{microtype}      
\usepackage{xcolor}         
\usepackage{amsmath, amssymb, amsthm}
\usepackage{mathtools}
\usepackage{graphicx}
\usepackage{algorithm}
\usepackage{algpseudocode}
\usepackage{todonotes}

\newtheorem{theorem}{Theorem}[section]

\newtheorem{lemma}[theorem]{Lemma}
\newtheorem{proposition}[theorem]{Proposition}
\newtheorem{definition}{Definition}[section]
\newtheorem{assumption}{Assumption}[section]
\theoremstyle{remark}
\newtheorem{remark}{Remark}[section]

\DeclareMathOperator*{\argmax}{arg\,max}
\DeclareMathOperator*{\argmin}{arg\,min}
\DeclarePairedDelimiter{\norm}{\lVert}{\rVert}

\usepackage{subcaption}

\usepackage[numbers,sort&compress]{natbib}

\title{Stochastic Dynamic Barrier Perturbed Gradient Methods for Nonconvex Simple Bilevel Optimization}

\author{
\parbox{\textwidth}{
\centering
Mohammad Mahdi Ahmadi\thanks{Department of Systems and Industrial Engineering, The University of Arizona, Tucson, AZ, USA.\\
Emails:
\texttt{ahmadi@arizona.edu, erfany@arizona.edu}}
\quad
Jincheng Cao\thanks{Department of Electrical and Computer Engineering, The University of Texas at Austin, Austin, TX, USA.\\
Emails: \texttt{jinchengcao@utexas.edu, mokhtari@austin.utexas.edu}}
\quad
Aryan Mokhtari$^{\thefootnote}$
\quad\\
Erfan Yazdandoost Hamedani$^{*}$
}
}
\date{}

\begin{document}

\maketitle

\begin{abstract}
We study stochastic simple bilevel optimization with smooth, possibly nonconvex upper- and lower-level objectives accessed only through stochastic gradient oracles. A key challenge is that the dual multiplier induced by the lower-level constraint may become unbounded near lower-level stationary points, invalidating bounded-dual analyses and destabilizing stochastic gradient estimates. To address this, we propose \emph{Stochastic Dynamic Barrier Perturbed Gradient} (SDBPG), a single-loop method that adaptively perturbs the dual formulation to regularize this degeneracy. The perturbation stabilizes the multiplier and yields controlled bias and variance even near the lower-level stationarity region. Under a rare-visit assumption governed by a parameter $\delta \in (0, \tfrac{1}{2}]$, SDBPG finds an $(\epsilon, \epsilon)$-stationary point in $\mathcal{O}(\epsilon^{-1/\delta})$ iterations, with sample gradient complexities $\mathcal{O}(\epsilon^{-2/\delta})$ and $\mathcal{O}(\epsilon^{-3/\delta})$ for the upper- and lower-level objectives, where larger $\delta$ corresponds to rarer visits to the bad region describing the negative alignment between the two objectives when the lower-level gradient is small. 
We further develop PR-SDBPG, a penalty-regularized variant that eliminates the rare-visit assumption, and VR-PR-SDBPG, which improves the resulting sample complexities entirely through variance reduction. To our knowledge, these are the first explicit $(\epsilon_f,\epsilon_g)$-stationarity guarantees for stochastic nonconvex-nonconvex simple bilevel optimization. 
\end{abstract}

\section{Introduction}

Bilevel optimization has emerged as a fundamental framework for modeling hierarchical decision-making problems. A particularly important subclass is \emph{simple bilevel optimization}, where the goal is to minimize an upper-level objective over the solution set of a lower-level problem~\citep{dempe2010optimality,dutta2020algorithms,jiang2023conditional,cao2024accelerated}. This class of problems has recently gained significant attention in machine learning and artificial intelligence due to its wide range of applications, including continual learning~\citep{borsos2020coresets,hao2023bilevel}, hyperparameter optimization~\citep{franceschi2018bilevel,shaban2019truncated}, meta-learning~\citep{rajeswaran2019meta,bertinetto2018meta}, reinforcement learning~\citep{hong2023two,zhang2020bi}, and machine unlearning~\citep{sekhari2021remember,liu2024towards,reisizadeh2026blur}.

In this work, we focus on the stochastic variant of the simple bilevel problem, motivated by large-scale learning settings where exact evaluations are infeasible. Both the upper- and lower-level objectives are expressed as expectations accessible only through sampled realizations, leading to the stochastic nonconvex bilevel problem
\begin{align}\label{eq:mainn1}
\min_{\mathbf{x} \in \mathbb{R}^n} \; f(\mathbf{x}) = \mathbb{E}\big[\tilde f(\mathbf{x}, \xi_f)\big] \quad
\text{s.t.} \quad \mathbf{x} \in \mathcal{X}_g^* := \argmin_{\mathbf{z} \in \mathbb{R}^n} \; g(\mathbf{z}) = \mathbb{E}\big[\tilde g(\mathbf{z}, \xi_g)\big],
\end{align}
where $\tilde f, \tilde g$ are continuously differentiable (possibly nonconvex) functions, and $\xi_f$, $\xi_g$ are independent random variables drawn from unknown distributions. {A prominent application of the nonconvex bilevel setting in~\eqref{eq:mainn1} is large language model (LLM) unlearning~\citep{reisizadeh2026blur}, which can be formulated as a simple bilevel problem whose objectives are generally nonconvex due to the neural network structure.}

A central difficulty in Problem~\eqref{eq:mainn1} is that the feasible set $\mathcal{X}_g^*$ is implicit and it is defined through the solution set of the lower-level problem rather than by explicit constraints. This implicit structure precludes several standard algorithmic approaches, including projection-based methods, since projecting onto $\mathcal{X}_g^*$ is essentially as hard as solving the lower-level problem. Regularization schemes, such as Tikhonov regularization~\citep{tikhonov1977solutions,solodov2007explicit}, typically require convexity of the lower-level objective and therefore do not apply in our setting. Moreover, the value function reformulation approach $\min f(\mathbf{x})$ subject to $g(\mathbf{x}) \leq g^*$, where $g^*$ denotes the optimal lower-level value, is also intractable as $g$ is nonconvex, hence, $g^*$ is generally difficult to compute, and the associated dual variables may be unbounded~\citep{cartis2014complexity,facchinei2021ghost}.

Among first-order methods, Dynamic Barrier Gradient Descent (DBGD)~\citep{gong2021bi} provides a natural approach for this problem class. Rather than projecting onto or estimating $\mathcal{X}_g^*$, it constructs search directions by adaptively combining $\nabla f$ and $\nabla g$ through a dual multiplier $\lambda_k$ that regulates the interaction between the two levels. A discrete-time variant analyzed by~\citet{cao2025complexity} introduced $(\epsilon_f,\epsilon_g)$-stationarity (Definition~\ref{def:stationary}) and established iteration complexity $\mathcal{O}(\max(\epsilon_f^{-2},\epsilon_g^{-2}))$ in the deterministic nonconvex-nonconvex setting. However, DBGD faces a fundamental challenge in which the multiplier scales as $\lambda_k \propto \|\nabla g(\mathbf{x}_k)\|^{-1}$ and diverges near lower-level stationarity. While manageable in deterministic settings due to exact gradients and controlled divergence, this degeneracy is exacerbated under stochastic gradients, where noise
amplifies the instability. In particular, the stochastic multiplier $\tilde{\lambda}_k$, defined as a ratio of correlated mini-batch random variables, is generally biased and lacks bounded variance.

These challenges have motivated recent work summarized in Table~\ref{tab:comparison}. 
An alternative deterministic approach by \citet{reisizadeh2026blur} avoids the DBGD dual variable via a hierarchical gradient method with temporal averaging, achieving complexities $\mathcal{O}(\epsilon_f^{-4})$ and $\mathcal{O}(\epsilon_g^{-2})$, but requiring exact gradients. 
In the stochastic nonconvex-nonconvex setting, the only existing method is BLOOP~\citep{hsieh2024careful}, which projects $\nabla f$ onto the subspace orthogonal to $\nabla g$, thereby avoiding the unbounded dual issue, but achieves only $\mathcal{O}(\epsilon_g^{-4})$ for the lower level with no guarantees on upper-level stationarity. When $g$ is convex, stronger results are possible~\citep{cao2023projection,giang2025conditional}, but these rely on convexity and do not apply here.

\textbf{Gap.} No existing method provides joint $(\epsilon_f,\epsilon_g)$-stationarity guarantees with explicit sample complexity bounds in the stochastic nonconvex-nonconvex simple bilevel setting.

\paragraph{Contributions.}
We propose two main single-loop, first-order algorithms for stochastic nonconvex simple bilevel optimization, both extending the deterministic DBGD framework~\citep{cao2025complexity} to the stochastic setting. The key contributions are:

\begin{itemize}[leftmargin=1.5em]
    \item \textbf{SDBPG: stochastic DBGD with dual perturbation (Section~\ref{sec:sdbpg}).} We introduce a perturbation $\gamma_k$ into the dual variable denominator, replacing $\|\nabla g\|^2$ with $\|\nabla g\|^2 + \gamma_k$. In the stochastic setting, the dual variable involves a ratio of \emph{correlated random variables}, both numerator and denominator share the same mini-batch, making the estimator neither unbiased nor bounded-variance. The perturbation regularizes this ratio, enabling a controlled bias-variance tradeoff. 
    Under a ``rare-visit'' assumption on the iterates, SDBPG achieves $(\epsilon,\epsilon)$-stationarity with sample complexities $\mathcal{O}(\epsilon^{-2/\delta})$ and $\mathcal{O}(\epsilon^{-3/\delta})$ for the upper- and lower-level objectives, respectively, where $\delta\in(0,\tfrac12]$ measures the rarity of visits to regions of gradient misalignment near lower-level stationary points; larger $\delta$ corresponds to rarer visits to the bad region, with $\delta=\tfrac12$ representing the case of uniformly bounded cumulative expected visits; in particular, an empty bad region is a sufficient special case (Theorem~\ref{thm:theorem2}).

    \item \textbf{PR-SDBPG: penalty-regularized SDBPG (Section~\ref{sec:pr-sdbpg}).} To eliminate the rare-visit assumption entirely, we redesign the subproblem using a penalty-regularized unconstrained objective whose denominator is always bounded below by $\gamma > 0$. This yields a multiplier that is uniformly bounded regardless of $\|\nabla g(\mathbf{x}_k)\|$, allowing the batch sizes to be \emph{fixed constants} set a priori. PR-SDBPG achieves the same $(\epsilon,\epsilon)$-stationarity \emph{without} the rare-visit assumption, at the cost of higher sample complexities: $\mathcal{O}(\epsilon^{-8})$ and $\mathcal{O}(\epsilon^{-10})$ for the upper and lower levels, respectively (Theorem~\ref{thm:pr-sdbpg}). Moreover, incorporating STORM-type variance reduction yields a \emph{variance-reduced PR-SDBPG} (VR-PR-SDBPG) that attains $(\epsilon, \epsilon)$-stationarity with improved sample complexities $\mathcal{O}(\epsilon^{-6})$ and $\mathcal{O}(\epsilon^{-8})$ for the upper and lower levels, respectively (Theorem~\ref{rem:vr-pr-sdbpg}).

    \item \textbf{Novel joint Lipschitz analysis.} The stochastic descent direction $\nabla\tilde{f} + \tilde{\lambda}_{\gamma,k}\nabla\tilde{g}$ cannot be analyzed by separately controlling $\tilde{\lambda}_{\gamma,k}$ and $\nabla\tilde{g}$, since they share mini-batch randomness. To address this, we introduce a joint Lipschitz analysis of the map $T_\gamma(u,v) = \frac{\langle u,v\rangle}{\|v\|^2+\gamma}v$ that directly bounds the bias and variance of the stochastic direction without requiring independence, which is a key ingredient in the analysis of both algorithms.

    \item \textbf{Comparison with inexact alternatives.} We also analyze SDBPG variants that employ stochastic primal-dual inner solvers (ConEx and SRPD) and show that the direct perturbation approach yields strictly better sample complexity than these alternatives.
\end{itemize}

\begin{table}[t!]
\centering
\caption{Comparison of results for simple bilevel optimization. S/D denote stochastic/deterministic settings; C/NC denote convex/nonconvex objectives. Sample complexity refers to the total number of stochastic gradient evaluations.}
\label{tab:comparison}
\small
\begin{tabular}{@{}l c c c c c l@{}}
\toprule
\textbf{Reference} & \textbf{Type} & \textbf{$f$} & \textbf{$g$} & \textbf{Upper} & \textbf{Lower} & \textbf{Extra assumption} \\
\midrule
\citet{jalilzadeh2024stochastic} & S & C & C & \multicolumn{2}{c}{$\mathcal{O}(\epsilon^{-4})$} & Convexity \\
\citet{cao2023projection} & S & NC & C & \multicolumn{2}{c}{$\tilde{\mathcal{O}}(\epsilon^{-3})$} & $g$ convex \\
\citet{giang2025conditional} & S & NC & C & \multicolumn{2}{c}{$\tilde{\mathcal{O}}(\epsilon^{-7})$} & $g$ convex \\
\citet{hsieh2024careful} & S & NC & NC & -- & $\mathcal{O}(\epsilon_g^{-4})$ & Lower-level only \\
\citet{reisizadeh2026blur} & D & NC & NC & \multicolumn{2}{c}{N/A (deterministic)} & -- \\
\citet{cao2025complexity} & D & NC & NC & \multicolumn{2}{c}{N/A (deterministic)} & -- \\
\midrule
\textbf{SDBPG} (Thm.~\ref{thm:theorem2}) & \textbf{S} & \textbf{NC} & \textbf{NC} & $\mathbf{\mathcal{O}(\epsilon^{-2/\delta})}$ & $\mathbf{\mathcal{O}(\epsilon^{-3/\delta})}$ & \textbf{Rare-visit $\delta\in (0,\tfrac12]$} \\
\textbf{PR-SDBPG} (Thm.~\ref{thm:pr-sdbpg}) & \textbf{S} & \textbf{NC} & \textbf{NC} & $\mathbf{\mathcal{O}(\epsilon^{-8})}$ & $\mathbf{\mathcal{O}(\epsilon^{-10})}$ & \textbf{None} \\
\textbf{VR-PR-SDBPG} (Thm.~\ref{rem:vr-pr-sdbpg}) & \textbf{S} & \textbf{NC} & \textbf{NC} & $\mathbf{\mathcal{O}(\epsilon^{-6})}$ & $\mathbf{\mathcal{O}(\epsilon^{-8})}$ & \textbf{None} \\
\bottomrule
\end{tabular}
\end{table}

\subsection{Additional related work}

\paragraph{Nonconvex constrained optimization.}
Problem~\eqref{eq:mainn1} can be reformulated as $\min f(\mathbf{x})$ s.t.\ $g(\mathbf{x}) \leq g^*$, suggesting a connection to nonconvex constrained optimization. Several works provide complexity guarantees in this setting~\citep{cartis2014complexity,facchinei2021ghost,cartis2017corrigendum,cartis2019evaluation,birgin2016evaluation,lin2019inexact}. However, these methods typically require knowledge of $g^*$ or impose constraint qualifications (e.g., MFCQ) that fail in the bilevel setting. Stochastic extensions~\citep{boob2023stochastic,shi2026momentum,yang2026stochastic,shen2025sequential,cui2025two} face similar limitations and often require second-order oracle information to characterize lower-level optimality, making them less suitable for large-scale problems.

\paragraph{General bilevel optimization.}
A broader class of bilevel problems considers formulations where the upper-level objective depends on an auxiliary variable $\mathbf{y}$ appearing in both levels. Several works address nonconvex variants of this general formulation~\citep{liu2022bome,xiao2023generalized,chen2024finding,ahmadi2025single,kwon2023penalty,khanduri2021near,chen2021tighter,abolfazli2025perturbed,sharifi2025sequential}. However, many of these impose the Polyak--\L{}ojasiewicz (PL) condition on the lower-level objective~\citep{liu2022bome,xiao2023generalized,chen2024finding,kwon2023penalty}, which restricts the problem class. Moreover, the stationarity notions used in these works (e.g., hyper-gradient norm) are defined with respect to upper-level variables that are absent in the simple bilevel structure, making the guarantees not directly comparable to ours.

\section{Preliminaries}\label{Sec:prelim}
\subsection{Assumptions and definitions}
\begin{assumption}\label{assump:gradf-g-lip}
We assume the following conditions hold:
\begin{itemize}
    \item[(i)] The function $f$ is continuously differentiable, and its gradient is Lipschitz continuous with constant $L_f$, i.e.,
    $\|\nabla f(\mathbf{x}) - \nabla f(\mathbf{y})\| \leq L_f \|\mathbf{x} - \mathbf{y}\|, \quad \forall \mathbf{x}, \mathbf{y} \in \mathbb{R}^n$.
    
    
    \item[(ii)] The function $g$ is continuously differentiable, and its gradient is Lipschitz continuous with constant $L_g$, i.e.,
    $\|\nabla g(\mathbf{x}) - \nabla g(\mathbf{y})\| \leq L_g \|\mathbf{x} - \mathbf{y}\|, \quad \forall \mathbf{x}, \mathbf{y} \in \mathbb{R}^n$.
\end{itemize}
In addition, we assume that both objective functions are bounded below, i.e., $f^* > -\infty$ and $g^* > -\infty$, where $f^*$ and $g^*$ denote the infimum values of $f$ and $g$, respectively.
\end{assumption}

\begin{assumption}\label{assum:stoch_grad}
We have access to independent stochastic gradients
$\nabla \tilde f(\mathbf{x},\xi_f) = \nabla f(\mathbf{x}) + \delta_{f}$ and 
$\nabla \tilde g(\mathbf{x},\xi_g) = \nabla g(\mathbf{x}) + \delta_{g}$,
where the noise terms are unbiased ($\mathbb{E}[\delta_f]=\mathbb{E}[\delta_g]=0$) with bounded variance
$\mathbb{E}[\|\delta_f\|^2]\le\nu_f^2$ and 
$\mathbb{E}[\|\delta_g\|^2]\le\nu_g^2$.
\end{assumption}


\begin{assumption}\label{assum:bounded-stoch-g}
The gradients of $f$ and $g$ are uniformly bounded; that is, there exist constants $0 \le G_f, C_g < \infty$ such that, for all $\mathbf{x} \in \mathbb{R}^n$, $\|\nabla f(\mathbf{x})\| \le G_f$ and $\|\nabla g(\mathbf{x})\| \le C_g$.
\end{assumption}

\begin{definition}
Let $\{\mathbf{x}_k\}_{k\ge 0}$ be the sequence of iterates generated by the algorithm, and define the filtration
$\mathcal{F}_k \triangleq \sigma(\mathbf{x}_0,\mathbf{x}_1,\ldots,\mathbf{x}_k)$,
which represents the $\sigma$-algebra generated by the history of iterates up to iteration $k$. 
For any random variable $Z$ defined on the underlying probability space, we define
$\mathbb{E}_k[Z] \;\triangleq\; \mathbb{E}\!\left[\,Z \,\middle|\, \mathcal{F}_k \right]$,
that is, the conditional expectation of $Z$ given the information available up to iteration $k$.
\end{definition}
\paragraph{Stationarity metric.}
We adopt the stationarity metric for nonconvex simple bilevel optimization introduced in~\citep{cao2025complexity}. Informally, we seek a point where (i) the lower-level objective is approximately stationary, and (ii) the upper-level gradient has no significant component along directions that could further decrease $g$.

\begin{definition}\label{def:stationary}
Let $\epsilon_f \geq 0$ and $\epsilon_g \geq 0$. A point $\hat{\mathbf{x}} \in \mathbb{R}^n$ is called an $(\epsilon_f,\epsilon_g)$-stationary point of Problem~\eqref{eq:mainn1} if there exists a scalar $\lambda \geq 0$ such that
\[
\|\nabla g(\hat{\mathbf{x}})\|^2 \leq \epsilon_g,
\quad \text{and} \quad
\|\nabla f(\hat{\mathbf{x}}) + \lambda \nabla g(\hat{\mathbf{x}})\|^2 \leq \epsilon_f.
\]
\end{definition}

The first condition requires the lower-level gradient to be small, ensuring approximate first-order stationarity of $g$. The second condition states that $\nabla f(\hat{\mathbf{x}})$ is approximately expressible as $-\lambda \nabla g(\hat{\mathbf{x}})$ for some $\lambda \geq 0$; in other words, the component of $\nabla f$ orthogonal to $\nabla g$ is small, which is a first-order necessary condition for bilevel optimality (see~\citep{cao2025complexity} for a detailed discussion).

\section{Stochastic DBGD with dual perturbation}\label{sec:sdbpg}
We build on the dynamic barrier gradient descent (DBGD) framework~\citep{gong2021bi}, which constructs a descent direction $\mathbf{d}_k$ that reduces $f$ while ensuring sufficient descent on $g$. At each iteration, $\mathbf{d}_k$ solves the quadratic program
\begin{equation}\label{eq:QP}
    \mathbf{d}_k = \argmin_{\mathbf{d} \in \mathbb{R}^n} 
    \;\frac{1}{2}\|\nabla f(\mathbf{x}_k) - \mathbf{d}\|^2
    \quad \text{s.t.} \quad 
    \nabla g(\mathbf{x}_k)^\top \mathbf{d} \ge \phi(\mathbf{x}_k),
\end{equation}
where $\phi(\mathbf{x}_k) \geq 0$ controls the required alignment with $\nabla g(\mathbf{x}_k)$. Following~\citep{reisizadeh2026blur,gong2021bi,cao2025complexity}, we use $\phi(\mathbf{x}_k)=\beta_k\|\nabla g(\mathbf{x}_k)\|^2$. The equivalent saddle-point formulation is
\begin{align}\label{eq:saddle-point}
\min_{\mathbf d\in\mathbb R^n}~\max_{\lambda\geq 0}~\mathcal{L}_k(\mathbf d, \lambda) \triangleq \frac{1}{2}\|\nabla f(\mathbf x_k)-\mathbf d\|^2-\lambda \left(\nabla g(\mathbf x_k)^\top \mathbf d - \phi(\mathbf x_k) \right),
\end{align}%
which admits the closed-form solution $\mathbf{d}_k = \nabla f(\mathbf{x}_k) + \lambda_k \nabla g(\mathbf{x}_k)$ with dual multiplier
\begin{align}\label{eq:tild-lambda1}
    \lambda_k = \max\left\{
    \frac{\phi(\mathbf{x}_k) - \nabla f(\mathbf{x}_k)^\top \nabla g(\mathbf{x}_k)}
    {\|\nabla g(\mathbf{x}_k)\|^2}, \; 0
    \right\}.
\end{align}
The iterate is then updated as
\begin{align}\label{eq:x-update2}
    \mathbf{x}_{k+1} 
    &= \mathbf{x}_k - \eta_k \big(\nabla f(\mathbf{x}_k) + \lambda_k \nabla g(\mathbf{x}_k)\big).
\end{align}

\paragraph{Challenges in the stochastic setting.}
In the stochastic setting, the dual variable $\lambda_k$ in~\eqref{eq:tild-lambda1} involves ratios of stochastic quantities, and $\lambda_k = \mathcal{O}(\|\nabla g(\mathbf x_k)\|^{-1})$ near lower-level stationarity (Lemma~\ref{lem:lambda-ineq}). Solving the subproblem~\eqref{eq:saddle-point} inexactly via a stochastic primal-dual method~\citep{zhao2022accelerated,boob2023stochastic} amplifies this ill-conditioning through a multi-loop structure, resulting in significantly larger sample complexity (see Section~\ref{section:inexact}). We instead pursue a direct single-loop stochastic approximation of $\mathbf d_k$ and $\lambda_k$ via a perturbed dual reformulation.

\subsection{Proposed method: dual perturbation and stochastic approximation}
To stabilize the dual variable, we introduce a perturbation $\gamma_k \geq 0$ that regularizes the denominator in~\eqref{eq:tild-lambda1}, replacing $\|\nabla g_k\|^2$ with $\|\nabla g_k\|^2 + \gamma_k$. The perturbed dual problem is
\begin{align}\label{eq:argmax-lameps}
\lambda_{\gamma,k} \equiv 
\argmax_{\lambda \ge 0}\;
\lambda \beta_k \left(\|\nabla g(\mathbf{x}_k)\|^2 + \gamma_k\right)
- \lambda \nabla g(\mathbf{x}_k)^\top \nabla f(\mathbf{x}_k)
-\tfrac{1}{2}\lambda^2 \left(\|\nabla g(\mathbf{x}_k)\|^2 + \gamma_k\right).
\end{align}
The solution of~\eqref{eq:argmax-lameps} admits the closed form
\begin{align}\label{eq:lambda-eps1}
\lambda_{\gamma,k}
= \max\left\{
\beta_k - 
\frac{\nabla g(\mathbf{x}_k)^\top \nabla f(\mathbf{x}_k)}
{\|\nabla g(\mathbf{x}_k)\|^2 + \gamma_k},
0
\right\}.
\end{align}

Since exact gradients are unavailable, we construct a stochastic estimator by replacing the gradients in~\eqref{eq:lambda-eps1} with mini-batch samples:

\begin{align}\label{eq:tild-lambda-eps-2}
    \tilde\lambda_{\gamma,k} = \max\left\{\beta_k - \frac{\nabla \tilde g(\mathbf x_k,\xi_{g}^k)^\top \nabla\tilde f(\mathbf{x}_k,\xi_f^k)}{\|\nabla \tilde g(\mathbf x_k,\xi_{g}^k)\|^2+\gamma_k}, 0\right\}.
\end{align}
In \eqref{eq:tild-lambda-eps-2}, the same mini-batch stochastic gradient $\nabla \tilde g(\mathbf x_k, \xi_g^k)$ is used in both the numerator and denominator, which induces a dependency between these terms. This dependence is inherent to ratio-type estimators and implies that $\tilde \lambda_{\gamma,k}$ is generally biased. Rather than requiring unbiasedness of this estimator, our analysis directly focuses on the stochastic descent direction and establishes bounds on its bias and second moment, as developed in Section \ref{Sec:conv-anlys}.

The complete steps of the proposed method are summarized in Algorithm \ref{alg:alg1}.
\begin{algorithm}[t!]
\caption{Stochastic Dynamic Barrier Perturbed Gradient (SDBPG) Method}
\label{alg:alg1}
\begin{algorithmic}[1]
\State \textbf{Input:} step sizes $\{\eta_k\}_{k\ge 0}$, barrier parameters $\{\beta_k\}_{k\ge 0}$, perturbation sequence $\{\gamma_k\}_{k\ge 0}$
\State \textbf{Initialization:} $\mathbf{x}_0 \in \mathbb R^n$
\For{$k = 0,1,2,\dots,K-1$}
    \State Draw independent mini-batch samples $\xi_f^k,\xi_{g}^k$ with size $N_f$ and $N_g$, respectively. 
    
    \State Compute $\nabla \tilde{f}_k = \frac{1}{N_f}\sum_{i=1}^{N_f} \nabla \tilde f(\mathbf{x}_k, \xi_{f,i}^k)$, $\nabla \tilde{g}_k = \frac{1}{N_g}\sum_{i=1}^{N_g} \nabla \tilde g(\mathbf{x}_k, \xi_{g,i}^k)$. 

    \State $\tilde\lambda_{\gamma,k} \gets \max\big\{\beta_k - \tfrac{\nabla \tilde g_k^\top \nabla\tilde f_k}{\|\nabla \tilde g_k\|^2+\gamma_k}, 0\big\}$ 

    \State 
    $\tilde{\mathbf d}_k
    \gets
    \nabla \tilde f_k
    +
    \tilde{\lambda}_{\gamma,k}
    \nabla \tilde g_k$

    \State 
    $\mathbf{x}_{k+1}
    \gets
    \mathbf{x}_k
    -
    \eta_k \tilde{\mathbf d}_k$
\EndFor
\end{algorithmic}
\end{algorithm}

\subsection{Convergence analysis}\label{Sec:conv-anlys}
In this section, we analyze the convergence behavior of the proposed stochastic method. 
The main challenge is to understand how the stochastic quantities used in the algorithm approximate their deterministic counterparts and how this affects the descent direction.

A natural approach would be to compare the stochastic dual variable $\tilde{\lambda}_{\gamma,k}$ with its deterministic counterpart $\lambda_k$. 
However, this is not the most suitable object for analysis. 
Due to stochastic noise and the perturbation in the denominator, the difference 
$|\tilde{\lambda}_{\gamma,k} - \lambda_k|$ may not vanish and does not directly reflect the behavior of the update rule. 
Moreover, even in the deterministic setting, $\lambda_k$ can become large when $\|\nabla g(\mathbf x_k)\|$ is small, making such comparisons unstable.

Instead, we focus on the quantity that directly appears in the descent direction, namely $\lambda_{\gamma,k}\nabla g(\mathbf x_k)$. 
Although $\lambda_{\gamma,k}$ itself may be large, the product $\lambda_{\gamma,k}\nabla g(\mathbf x_k)$ remains well-behaved under our assumptions. 
Therefore, the key step in our analysis is to control the discrepancy between the deterministic quantity $\lambda_{\gamma,k}\nabla g(\mathbf x_k)$ and its stochastic approximation 
$\tilde{\lambda}_{\gamma,k}\nabla \tilde g(\mathbf x_k,\xi_{g}^k)$.

\paragraph{Error decomposition and role of perturbation.}
The stochastic descent direction can be written as $\tilde {\mathbf d}_k = \nabla \tilde f_k + \tilde{\lambda}_{\gamma,k}\nabla \tilde g_k$. We define the descent-direction error as $e_d \triangleq \tilde {\mathbf d}_k - \mathbf d_k$. This error captures both the stochasticity in the gradients and the approximation error in the dual variable. To understand the structure of the error, we decompose
\begin{align*}
    e_d 
    &=\big(\nabla \tilde f_k-\nabla f(\mathbf x_k)\big)
+
\big(
\tilde{\lambda}_{\gamma,k}\nabla \tilde g_k
-
\lambda_{\gamma,k}\nabla g(\mathbf x_k)
\big)
+
\left(\lambda_{\gamma,k}-\lambda_k\right)\nabla g(\mathbf x_k).
\end{align*}
The first two terms capture the stochastic approximation errors in the gradients and in the dual-gradient term, respectively. The last term arises from the perturbation $\gamma_k$ and introduces an additional deterministic error.
This decomposition explains the appearance of the term $\frac{\gamma_k \|\nabla f(\mathbf x_k)\|}{\|\nabla g(\mathbf x_k)\|^2 + \gamma_k}$ in the bias bound, and its squared counterpart in the second-moment bound of the stochastic descent direction in Lemma~\ref{lem:bias-ed-gradg}. This term originates from bounding the difference between $\lambda_{\gamma,k}$ and $\lambda_k$, and reflects the effect of perturbing the denominator. 
While the perturbation stabilizes the dual variable, it introduces a deterministic bias that must be controlled through an appropriate choice of $\gamma_k$.


\begin{lemma}[Stochastic approximation error]\label{lem:bias-ed-gradg}
Suppose Assumptions~\ref{assum:stoch_grad}-~\ref{assum:bounded-stoch-g} hold and let 
$e_d \triangleq \tilde{\mathbf d}_k - \mathbf d_k$.
Then, for any $k\geq 0$:
\begin{enumerate}[label=(\alph*)]
\item 
\textbf{Bias bound.} The conditional bias satisfies \label{lem:bias-part}
\begin{align}\label{eq:exp-ed1-gradg}
\norm*{\mathbb E_k\left[e_d\right]}
\leq \left(\beta_k+\frac{2G_f}{\sqrt{\gamma_k}}\right)\frac{\nu_g}{\sqrt{N_g}} + \frac{\nu_f}{\sqrt{N_f}} + \frac{
\gamma_k G_f}{\|\nabla g(\mathbf x_k)\|^2+\gamma_k}.
\end{align}
\item \textbf{Second-moment bound.} The conditional second moment satisfies \label{lem:second-moment-ed-gradg}
\begin{align}\label{eq:exp-ed2-gradg}
\mathbb E_k\left[\|e_d\|^2\right]\leq \frac{9\nu_f^2}{N_f} 
+6\left(\beta_k+\frac{2G_f}{\sqrt{\gamma_k}}\right)^2\frac{\nu_g^2}{N_g} 
+ \frac{3\gamma_k^2G_f^2
}{\left(\|\nabla g(\mathbf x_k)\|^2+\gamma_k\right)^2}.
\end{align}
\end{enumerate}
\end{lemma}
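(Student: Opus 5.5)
We decompose the error as in the paragraph preceding the statement:
\[
e_d = \underbrace{\big(\nabla\tilde f_k-\nabla f(\mathbf x_k)\big)}_{E_1} + \underbrace{\big(\tilde\lambda_{\gamma,k}\nabla\tilde g_k-\lambda_{\gamma,k}\nabla g(\mathbf x_k)\big)}_{E_2} + \underbrace{(\lambda_{\gamma,k}-\lambda_k)\nabla g(\mathbf x_k)}_{E_3}.
\]
Each piece is bounded separately. For the bias we use the triangle inequality. For the second moment we use $\|a+b+c\|^2\le 3(\|a\|^2+\|b\|^2+\|c\|^2)$. The constants $9$ and $6$ suggest that $E_2$ is further split into two parts: an $f$-noise part and a $g$-noise part.

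\textbf{The terms $E_1$ and $E_3$.} By Assumption~\ref{assum:stoch_grad} and independence within the mini-batch, $\mathbb E_k[E_1]=0$ and $\mathbb E_k\|E_1\|^2\le\nu_f^2/N_f$.

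For $E_3$, which is deterministic given $\mathcal F_k$, we compare the two dual values. Both are of the form $\max\{\beta_k(\cdot)-\langle\nabla g,\nabla f\rangle/(\cdot),0\}$, with $\phi=\beta_k\|\nabla g\|^2$. The map $t\mapsto\max\{t,0\}$ is $1$-Lipschitz, so
\[
|\lambda_{\gamma,k}-\lambda_k|\le |\langle\nabla g,\nabla f\rangle|\Big(\tfrac{1}{\|\nabla g\|^2}-\tfrac{1}{\|\nabla g\|^2+\gamma_k}\Big).
\]
Multiplying by $\|\nabla g\|$ and using Cauchy--Schwarz gives
\[
\|E_3\|\le G_f\,\frac{\gamma_k}{\|\nabla g\|^2+\gamma_k}.
\]
This is the last term in both bounds.

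\textbf{The term $E_2$ (main obstacle).} This is where the joint Lipschitz analysis of $T_\gamma(u,v)=\frac{\langle u,v\rangle}{\|v\|^2+\gamma}v$ enters. Write
\[
\tilde\lambda_{\gamma,k}\nabla\tilde g_k=\max\{\beta_k\tilde v-T_\gamma(u,\tilde v)\text{-type expression}\},
\]
more precisely as $h(\tilde u,\tilde v)=\max\{\beta_k-\langle \tilde u,\tilde v\rangle/(\|\tilde v\|^2+\gamma),0\}\,\tilde v$. We aim to show that $h$ is Lipschitz in each argument.

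\emph{Lipschitz in $v$.} Use $\|v\|/(\|v\|^2+\gamma)\le 1/(2\sqrt\gamma)$ and $\|v\|^2/(\|v\|^2+\gamma)\le1$. Differentiating $T_\gamma$ in $v$ then gives a Lipschitz constant of order $\|u\|/\sqrt\gamma$ (we expect $2\|u\|/\sqrt{\gamma}$ after bounding the three terms of the derivative). The $\beta_k v$ part contributes $\beta_k$. Clipping by $\max\{\cdot,0\}$ is handled casewise: on the boundary between the two regimes $h$ is continuous, so a piecewise argument along the segment preserves the constant.

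\emph{Lipschitz in $u$.} For fixed $v$, the constant is $\|v\|^2/(\|v\|^2+\gamma)\le1$.

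\emph{Applying the Lipschitz bounds.} Split
\[
E_2=\big[h(\tilde u,\tilde v)-h(u,\tilde v)\big]+\big[h(u,\tilde v)-h(u,v)\big],
\]
with $u=\nabla f(\mathbf x_k)$, $v=\nabla g(\mathbf x_k)$, so that $\|u\|\le G_f$ by Assumption~\ref{assum:bounded-stoch-g}.
\begin{itemize}
\item The second bracket is bounded by $(\beta_k+2G_f/\sqrt{\gamma_k})\|\tilde v-v\|$. Its conditional expectation is therefore at most $(\beta_k+2G_f/\sqrt{\gamma_k})\nu_g/\sqrt{N_g}$ by Jensen, and its second moment is at most the square of that constant times $\nu_g^2/N_g$.
\item The first bracket is at most $\|\tilde u-u\|$ in norm.
\end{itemize}
Its bias is the delicate part. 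We either bound it crudely by $\nu_f/\sqrt{N_f}$, which is already budgeted by the $\nu_f/\sqrt{N_f}$ term, or, when the max is inactive, note that it is linear in $\tilde u$ and $\tilde u\perp\tilde v$ independence kills the mean. We will take the crude route, since it fits the stated constant: bias $\le \nu_f/\sqrt{N_f}$ from $E_2$'s $f$-part plus $0$ from $E_1$.

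\textbf{Assembling the second moment.} Group $e_d=(E_1+E_2^{(f)})+E_2^{(g)}+E_3$ and apply the factor $3$. Then $\|E_1+E_2^{(f)}\|^2\le 2\|E_1\|^2+2\|E_2^{(f)}\|^2\le 4\|\tilde u-u\|^2$. To reach exactly $9$ and $6$, we would instead use the weighting $3\cdot3\nu_f^2/N_f$ and $3\cdot2(\cdot)^2\nu_g^2/N_g$; any such weighting reproduces \eqref{eq:exp-ed2-gradg} up to matching constants.

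The step we expect to be hardest is a clean verification of the $2G_f/\sqrt{\gamma_k}$ Lipschitz constant of $v\mapsto T_\gamma(u,v)$ uniformly in $v$, including across the clipping kink.
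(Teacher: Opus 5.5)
Your proposal is correct and follows the paper's proof essentially step for step. It uses the same three-term decomposition, the same crude bias route, and the same split of the dual--gradient term: an $f$-argument change at the stochastic $\nabla\tilde g_k$ (constant $\|\nabla\tilde g_k\|^2/(\|\nabla\tilde g_k\|^2+\gamma_k)\le 1$) plus a $g$-argument change at the exact $\nabla f(\mathbf x_k)$ (constant $\beta_k+2G_f/\sqrt{\gamma_k}$). The paper gets that constant in its auxiliary Lipschitz lemma by bounding the derivative along the segment, which handles the clipping as your piecewise argument would.

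The only slip is bookkeeping. Your first grouping $(E_1+E_2^{(f)})+E_2^{(g)}+E_3$ gives coefficients $12$ and $3$, not the stated ones. The paper instead uses $3\|E_1\|^2+3\|E_2\|^2+3\|E_3\|^2$ with $\|E_2\|^2\le 2\|E_2^{(f)}\|^2+2\|E_2^{(g)}\|^2$. That is exactly your ``$3\cdot 3$ and $3\cdot 2$'' weighting, and it yields $9$ and $6$.
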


Leveraging the smoothness properties of the objective functions $f$ and $g$ stated in Assumption~\ref{assump:gradf-g-lip}, we establish a descent-type result. In particular, this result yields upper bounds on $\|\mathbf d_k\|^2$ and $\|\nabla g(\mathbf x_k)\|^2$ at every iteration, which are presented in the following.

Suppose Assumption~\ref{assump:gradf-g-lip} holds. Let the sequence
$\{\mathbf x_k\}_{k\geq 0}$ be generated by Algorithm \ref{alg:alg1}, using a fixed step-size $\eta_k \equiv \eta$ and a constant
parameter $\beta_k \equiv \beta$.
Let
$\Delta f_k \triangleq \mathbb E_k \left[f(\mathbf x_k)-f(\mathbf x_{k+1})\right]$ and
$\Delta g_k \triangleq \mathbb E_k \left[g(\mathbf x_k)-g(\mathbf x_{k+1})\right]$.
Then, 
\begin{align*}
    &\left(1-L_f \eta \right)\norm*{\mathbf d_k}^2 \leq \tfrac{1}{\eta} \Delta f_k+ \lambda_k \beta \norm*{\nabla g(\mathbf x_k)}^2 + G_f \norm*{\mathbb E_k[e_d]} + \tfrac{L_f\eta}{2} \big(\norm*{\mathbb E_k[e_d]}^2 + \mathbb E_k\big[\norm*{e_d}^2\big] \big),\\
    &\beta\norm*{\nabla g(\mathbf x_k)}^2 \leq \tfrac{2}{\eta}\Delta g_k +2L_g \eta \norm*{\mathbf d_k}^2 + (\tfrac{1}{\beta}+L_g\eta)\norm*{\mathbb E_k\left[e_d \right]}^2 + L_g\eta \mathbb E_k\big[\norm*{e_d}^2 \big].
\end{align*}

Together with the bounds on $e_d$, these inequalities link the stochastic error control to a sufficient decrease in the objectives. In particular, by combining these bounds with the previous lemmas, we now present the main convergence guarantee of the proposed algorithm.

\begin{proposition}\label{thm:theorem1}
    Assume that Assumptions~\ref{assump:gradf-g-lip}--\ref{assum:bounded-stoch-g} hold, and let the sequence $\{\mathbf{x}_k\}_{k\geq 0}$ be generated by Algorithm \ref{alg:alg1} using a constant step-size $\eta_k \equiv \eta=K^{-\frac{1}{4}}$, parameters $\beta_k \equiv \beta = K^{-\frac{1}{4}}$ and $\gamma_k=K^{-\frac{1}{2}}\|\nabla g(\mathbf x_k)\|^2$, and mini-batch sizes of $N_f=K$ and $N_g=K\gamma_k^{-1}$.
Let $\Delta_f \triangleq f(\mathbf{x}_0)-\inf f$ and $\Delta_g \triangleq g(\mathbf{x}_0)-g^*$, then
\begin{align*}
    &\frac{1}{K}\sum_{k=0}^{K-1}\mathbb E\left[\norm*{\mathbf d_{k}}^2\right] =\mathcal O\left(K^{-\frac{1}{2}}\right)
    ,\quad \frac{1}{K}\sum_{k=0}^{K-1}\mathbb E\left[\norm*{\nabla g(\mathbf x_{k})}^2\right] = \mathcal{O}\left(K^{-\frac{1}{2}}\right).
\end{align*}

\end{proposition}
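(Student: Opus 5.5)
The plan is to take the two per-iteration descent inequalities stated just before the proposition (one bounding $(1-L_f\eta)\|\mathbf d_k\|^2$, one bounding $\beta\|\nabla g(\mathbf x_k)\|^2$). I will insert the bias and second-moment bounds of Lemma~\ref{lem:bias-ed-gradg} evaluated at the prescribed parameters, take total expectations, telescope over $k=0,\dots,K-1$, and then solve the resulting coupled system for the two averages. I write $D\triangleq \frac1K\sum_k\mathbb E\|\mathbf d_k\|^2$ and $A\triangleq\frac1K\sum_k\mathbb E\|\nabla g(\mathbf x_k)\|^2$.

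\textbf{Step 1 (error terms).} With $\gamma_k=K^{-1/2}\|\nabla g(\mathbf x_k)\|^2$ and $N_g=K\gamma_k^{-1}$ we have $1/\sqrt{N_g}=\sqrt{\gamma_k/K}$. Hence $\frac{2G_f}{\sqrt{\gamma_k}}\frac{\nu_g}{\sqrt{N_g}}=2G_f\nu_g K^{-1/2}$. Also $\beta\nu_g/\sqrt{N_g}\le \beta\nu_g C_gK^{-3/4}$, using Assumption~\ref{assum:bounded-stoch-g}. Further, $\nu_f/\sqrt{N_f}=\nu_fK^{-1/2}$. The perturbation term is scale-free:
\[
\frac{\gamma_kG_f}{\|\nabla g(\mathbf x_k)\|^2+\gamma_k}=\frac{K^{-1/2}G_f}{1+K^{-1/2}}\le G_fK^{-1/2}.
\]
The degenerate case $\nabla g(\mathbf x_k)=0$ has to be treated separately (there $\lambda_k$ and $\lambda_{\gamma,k}$ are interpreted as $0$ and the $\nabla\tilde g$-part contributes only noise). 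Thus $\|\mathbb E_k[e_d]\|=\mathcal O(K^{-1/2})$ and $\mathbb E_k\|e_d\|^2=\mathcal O(K^{-1})$ uniformly in $k$. Since $\gamma_k$ is $\mathcal F_k$-measurable, conditioning is legitimate.

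\textbf{Step 2 (lower level).} I average the second inequality and divide by $\beta$, using $\eta=\beta=K^{-1/4}$ so that $\eta\beta K=K^{1/2}$ and $\eta/\beta=1$. This gives
\[
A\le \frac{2\Delta_g}{K^{1/2}}+2L_g D+\mathcal O\!\left(\tfrac{1}{\beta^2}K^{-1}+K^{-1}\right)=\mathcal O(K^{-1/2})+2L_gD.
\]

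\textbf{Step 3 (upper level).} For the first inequality I need the bound on $\lambda_k\beta\|\nabla g(\mathbf x_k)\|^2$. From \eqref{eq:tild-lambda1} (equivalently Lemma~\ref{lem:lambda-ineq}), $\lambda_k\|\nabla g_k\|^2\le \beta\|\nabla g_k\|^2+G_f\|\nabla g_k\|$. Hence
\[
\lambda_k\beta\|\nabla g_k\|^2\le \beta^2C_g^2+\beta G_f\|\nabla g_k\|,
\]
where the first term is $\mathcal O(K^{-1/2})$. With $1-L_f\eta\ge\frac12$ for large $K$, averaging and using Jensen ($\frac1K\sum\mathbb E\|\nabla g_k\|\le\sqrt A$) yields
\[
\tfrac12 D\le \frac{\Delta_f}{K^{3/4}}+\mathcal O(K^{-1/2})+K^{-1/4}G_f\sqrt A .
\]

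\textbf{Step 4 (closing the loop).} This is the main obstacle: the $\beta G_f\|\nabla g\|$ term is only of order $K^{-1/4}$ on its own, so it must be absorbed rather than bounded crudely. I substitute Step 2 and use $\sqrt{a+b}\le\sqrt a+\sqrt b$. Young's inequality then gives
\[
K^{-1/4}G_f\sqrt{2L_gD}\le \tfrac14 D+2L_gG_f^2K^{-1/2},
\]
together with $K^{-1/4}\sqrt{\mathcal O(K^{-1/2})}=\mathcal O(K^{-1/2})$. Moving $\frac14D$ to the left gives $D=\mathcal O(K^{-1/2})$. Plugging back into Step 2 gives $A=\mathcal O(K^{-1/2})$, which proves both claims.
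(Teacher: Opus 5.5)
Your proof is correct and is essentially the paper's argument. It uses the same two descent inequalities and the same substitution of $\gamma_k=K^{-1/2}\|\nabla g(\mathbf x_k)\|^2$ and $N_g=K\gamma_k^{-1}$ into Lemma~\ref{lem:bias-ed-gradg}, which makes the bias $\mathcal O(K^{-1/2})$ and the second moment $\mathcal O(K^{-1})$ uniformly in $k$. It also uses the same device: the $\beta G_f\|\nabla g(\mathbf x_k)\|$ term is routed through the $g$-inequality, and the resulting $\|\mathbf d_k\|$ contribution is absorbed by Young's inequality.

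The only difference is bookkeeping. You close the coupled system after averaging, via Jensen's bound $\frac1K\sum_k\mathbb E\|\nabla g(\mathbf x_k)\|\le\sqrt A$. The paper closes it per iteration, through Lemma~\ref{lem:imp-d2} and the potential $\mathcal G_k$, followed by a general exponent analysis specialized to $p=1$. Your ordering has a small advantage: everything under the square root is manifestly nonnegative, whereas the paper's per-iteration split places the possibly negative $\Delta g_k$ under a square root.
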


While Proposition~\ref{thm:theorem1} establishes an $\mathcal{O}(K^{-1/2})$ convergence rate matching the deterministic counterpart in \cite{cao2025complexity}, it does not yield a clean total sample complexity bound. The root cause is that the parameter choices $\gamma_k = K^{-1/2}\|\nabla g(\mathbf{x}_k)\|^2$ and $N_g = K\gamma_k^{-1} = K^{3/2}\|\nabla g(\mathbf{x}_k)\|^{-2}$ depend on the \emph{true} gradient norm $\|\nabla g(\mathbf{x}_k)\|$, which is unavailable in the stochastic setting. This creates two fundamental difficulties. First, the algorithm \emph{cannot be implemented as stated}: the practitioner only has access to stochastic estimates $\nabla \tilde{g}(\mathbf{x}_k, \xi_g^k)$ and cannot set $N_g$ or $\gamma_k$ correctly. Second, the total lower-level cost $\sum_{k=0}^{K-1} N_g^{(k)} = K^{3/2}\sum_{k=0}^{K-1}\|\nabla g(\mathbf{x}_k)\|^{-2}$ is \emph{random and trajectory-dependent}---it can become arbitrarily large if the iterates visit near-stationary regions of $g$ where $\|\nabla g(\mathbf{x}_k)\|$ is small. This is not an artifact of the analysis: as the iterates approach a lower-level stationary point, the dual problem in~\eqref{eq:saddle-point} becomes degenerate and stochastic estimation of $\tilde\lambda_{\gamma,k}$ becomes increasingly unstable, requiring more samples to control the bias.

To obtain a concrete complexity bound, we need to control $\lambda_{\gamma,k}$, and its stochastic counterpart $\tilde\lambda_{\gamma,k}$, near lower-level stationary points. From~\eqref{eq:lambda-eps1}, $\lambda_{\gamma,k}$ may become large when $\|\nabla g(\mathbf x_k)\|$ is small, and the two gradients are negatively aligned, namely when $\nabla g(\mathbf x_k)^\top\nabla f(\mathbf x_k)<0$. Let us define such ``bad region'' by $\mathcal R_k(\tau,\theta)\triangleq \{\mathbf x_k\mid \norm{\nabla g(\mathbf x_k)}^2\le \tau,~\nabla g(\mathbf x_k)^\top\nabla f(\mathbf x_k)< -\theta\norm{\nabla g(\mathbf x_k)}^2\}$, for some parameters $\tau,\theta>0$, and its observable stochastic counterpart by $\widehat{\mathcal R}_k(\tau,\theta) \triangleq \{\mathbf x_k\mid \norm{\nabla \hat g_k}^2\le \tau,~\nabla \hat g_k^\top\nabla \hat f_k< -\theta\norm{\nabla \hat g_k}^2\}$ using some independent mini-batche gradients $\nabla \hat f_k,\nabla \hat g_k$. We modify the method by setting $\tilde\lambda_{k,\gamma}=0$ whenever
$\mathbf x_k\in \widehat{\mathcal R}_k(\tau/2,\theta)$. Thus, in the observable bad region, the update direction becomes $\tilde{\mathbf d}_k=\nabla \tilde f_k$, avoiding the unstable multiplier. The detector threshold is set strictly below $\tau$ so that a mismatch between the observable and the true region forces a gradient estimation error of at least $(1-\frac{1}{\sqrt 2})\sqrt{\tau}$, which is precisely what makes the frequency of such mismatches controllable through $N_g$. Next, we impose an assumption to control the frequency of visits to the bad region $\mathcal R_k(\tau,\theta)$, which is defined through the true gradients only. The mismatch events, where the detector is active although the iterate lies outside $\mathcal R_k(\tau,\theta)$, are not required to be rare and are instead treated as direction errors in the analysis.
\begin{assumption}
\label{assump:rare-visit}
Fix a horizon $K\ge 1$. There exists $\varsigma\in (0,1]$ such that 
$\sum_{k=0}^{K-1}\mathbb P\left(\mathbf x_k\in \mathcal{R}_k(\tau,\theta)\right)=\mathcal O(K^{1-\varsigma})$.
\end{assumption}
\begin{assumption}
\label{assump:stoch-bound}
There exist constants $0 \le G_f, C_g < \infty$ such that, for all $\mathbf{x} \in \mathbb{R}^n$,\\
$\mathbb E\left[\|\nabla \tilde f(\mathbf{x},\xi_f)\|\right] \le G_f$ and $\mathbb E\left[\|\nabla \tilde g(\mathbf{x},\xi_g)\| \right]\le C_g$.
\end{assumption}
\begin{remark}
Assumption~\ref{assump:rare-visit} is consistent with the modified update. When $\mathbf x_k\in \widehat{\mathcal R}_k(\tau/2,\theta)$, the method sets
$\tilde\lambda_{k,\gamma}=0$ and follows the stochastic upper-level gradient step. Because the defining condition of the bad region corresponds to negative alignment between the upper- and lower-level gradients, such steps are expected to increase the lower-level function and drive the iterates out of the bad region after a few iterations. The assumption only requires that the total expected number of visits to the bad region $\mathcal R_k(\tau,\theta)$ is sublinear in $K$, and it imposes no condition on the observable region $\widehat{\mathcal R}_k(\tau/2,\theta)$.
\end{remark}%

We emphasize that the perturbation in~\eqref{eq:tild-lambda-eps-2} is still necessary. Assumption~\ref{assump:rare-visit} controls the frequency of bad-region visits, but it does not uniformly lower bound the stochastic norm
$\|\nabla \tilde g(\mathbf x_k,\xi_g^k)\|$. Hence, without perturbation, $\tilde\lambda_{\gamma,k}$ may still become unstable even outside the true bad region.

These two assumptions resolve the difficulties identified in Proposition~\ref{thm:theorem1} as follows. On the majority of iterations, those where $\mathbf x_k\notin \mathcal{R}_k(\tau,\theta)$ and the detector is inactive, we can show that the perturbation error $|\lambda_{\gamma,k}-\lambda_k|\|\nabla g(\mathbf x_k)\|$ is controllable which depends on parameters $\gamma_k,\beta_k$ and thresholds $\tau,\theta$ (see Lemma \ref{lem:lambda-bad-region-bound}). On the iterations where $\mathbf x_k\notin \mathcal{R}_k(\tau,\theta)$ while the detector is active, either the true lower-level gradient is also small, in which case the induced error is bounded by the same quantities up to an absolute constant, or the detector reports a false alarm, whose probability is at most $\mathcal O(\nu_g^2N_g^{-1}\tau^{-1})$ by Chebyshev inequality. Note that only the gradient norm part of the detector enters this argument, hence no relation between the alignment thresholds of $\widehat{\mathcal R}_k(\tau/2,\theta)$ and $\mathcal R_k(\tau,\theta)$ is needed, while the alignment condition itself prevents the multiplier from being zeroed at points where it is already well behaved. On the remaining rare iterations where $\mathbf{x}_k \in \mathcal{R}_k(\tau,\theta)$, the error can be large but Assumption~\ref{assump:stoch-bound} gives a uniform bound of order
$\mathcal O(\beta_k+G_f)$, while Assumption~\ref{assump:rare-visit} makes their accumulated contribution sublinear. This four-case argument, detailed in Appendix~\ref{Appndx10}, allows fixed choices of $\gamma_k$ and $N_g$ that are independent of
$\|\nabla g(\mathbf x_k)\|^{-1}$ with explicit complexity guarantee.

\begin{theorem}\label{thm:theorem2}
    Suppose Assumptions~\ref{assump:gradf-g-lip}, \ref{assum:stoch_grad}, \ref{assump:rare-visit}, and \ref{assump:stoch-bound} hold, and let the sequence $\{\mathbf{x}_k\}_{k\geq 0}$ be generated by Algorithm \ref{alg:alg1} where line 6 is modified as $\tilde\lambda_{\gamma,k}\gets 0$ if $\mathbf x_k\in \widehat{\mathcal R}_k(\tau/2,\theta)$, and $\tilde\lambda_{\gamma,k}\gets \eqref{eq:tild-lambda-eps-2}$ otherwise, using a constant step-size $\eta_k \equiv \eta=K^{-\frac{1}{4}}$, parameters $\theta=K^{-\frac{1}{4}}$, $\tau=K^{-\frac{1}{2}}$, $\beta_k \equiv \beta = K^{-\frac{1}{4}}$ and $\gamma_k\equiv \gamma=K^{-1}$, and mini-batch sizes of $N_f=K$ and $N_g=K^{2}$.
Let $\Delta_f \triangleq f(\mathbf{x}_0)-\inf f$ and $\Delta_g \triangleq g(\mathbf{x}_0)-g^*$, then for any $K\geq 1$, there
exists an index $k^*\in\{0,\hdots,K-1\}$ such that
\begin{align*}
    &\mathbb E\big[\norm*{\mathbf d_{k^*}}^2\big] \leq \mathcal{O}\big(K^{-\frac{1}{2}} +K^{\frac{1}{2}-\varsigma}\big),\quad \mathbb E\big[\norm*{\nabla g(\mathbf x_{k^*})}^2\big] \leq \mathcal{O}\big(K^{-\frac{1}{2}} +K^{\frac{1}{2}-\varsigma} \big),
\end{align*}
Moreover, let $\varsigma\in (\frac{1}{2},1]$ and $\delta\triangleq \varsigma-\frac{1}{2}$, then to achieve $(\epsilon,\epsilon)$-stationary solution, the sample gradient complexities of upper-level and lower-level objective functions are $\mathcal O(\epsilon^{-2/\delta})$ and $\mathcal O(\epsilon^{-3/\delta})$, respectively.
In particular, if $\mathcal R_k(\tau,\theta)$ is empty, then Assumption \ref{assump:rare-visit} holds with $\varsigma=1$, so $\delta=\frac{1}{2}$ and the sample complexities are $\mathcal O(\epsilon^{-4})$ and $\mathcal O(\epsilon^{-6})$, respectively, otherwise the rate depends on $\varsigma$.
\end{theorem}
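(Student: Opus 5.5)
The plan is to reuse the descent-type inequalities stated just before Proposition~\ref{thm:theorem1} for $\|\mathbf d_k\|^2$ and $\beta\|\nabla g(\mathbf x_k)\|^2$, with $\mathbf d_k$ the exact DBGD direction. These inequalities hold for any stochastic direction $\tilde{\mathbf d}_k$ once $e_d=\tilde{\mathbf d}_k-\mathbf d_k$ is defined, so the modification of line~6 only changes how we bound $\|\mathbb E_k[e_d]\|$ and $\mathbb E_k[\|e_d\|^2]$. I would sum both inequalities over $k=0,\dots,K-1$ and take total expectations. The first-difference terms then telescope to $\Delta_f/\eta$ and $2\Delta_g/\eta$. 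To remove $\|\mathbf d_k\|^2$ from the right side of the $g$-inequality, I would add $c\cdot$(first) to the $g$-inequality with $c\sim L_g\eta$ (or substitute the first into the second). With $\eta=\beta=K^{-1/4}$, the term $\lambda_k\beta\|\nabla g(\mathbf x_k)\|^2$ is handled by Lemma~\ref{lem:lambda-ineq}: $\lambda_k\|\nabla g\|\lesssim G_f+\beta\|\nabla g\|$, so this term is $\mathcal O(\beta\|\nabla g\|)$ and absorbed. This yields
\[
\tfrac1K\textstyle\sum_k \mathbb E\|\mathbf d_k\|^2+\tfrac1K\sum_k\mathbb E\|\nabla g(\mathbf x_k)\|^2\lesssim \tfrac{1}{\eta K}+\tfrac{1}{\beta\eta K}+\tfrac1K\sum_k\mathbb E\big[\tfrac{1}{\beta}\|\mathbb E_k e_d\|^2+G_f\|\mathbb E_k e_d\|+\eta\,\mathbb E_k\|e_d\|^2\big].
\]
The first two terms are $\mathcal O(K^{-1/2})$.

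The core step is a four-case bound on $e_d$, conditioning on the indicator $\mathbf 1\{\mathbf x_k\in\mathcal R_k(\tau,\theta)\}$ and on whether the detector fires.

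Case 1: $\mathbf x_k\notin\mathcal R_k$ and the detector is off. Here I apply Lemma~\ref{lem:bias-ed-gradg}. Its sampling terms are $(\beta+2G_f\gamma^{-1/2})\nu_g N_g^{-1/2}+\nu_fN_f^{-1/2}=\mathcal O(K^{-1/2})$, using $\gamma=K^{-1}$ and $N_g=K^2$. The perturbation term $\gamma G_f/(\|\nabla g\|^2+\gamma)$ must be re-bounded as in Lemma~\ref{lem:lambda-bad-region-bound}. If $\|\nabla g\|^2>\tau$, it is at most $\gamma G_f/\tau=K^{-1/2}$. If $\|\nabla g\|^2\le\tau$ but the alignment satisfies $\nabla g^\top\nabla f\ge-\theta\|\nabla g\|^2$, I would instead bound the actual error $|\lambda_{\gamma,k}-\lambda_k|\|\nabla g\|$ directly. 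Since $\lambda_k\le\beta+\theta$, this error is $\lesssim(\beta+\theta)\|\nabla g\|\le(\beta+\theta)\sqrt\tau$, which is tiny.

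Case 2: $\mathbf x_k\notin\mathcal R_k$ and the detector fires with $\|\nabla g\|^2\le\tau$. Here $\tilde\lambda=0$, so the error is $\lambda_k\nabla g$ plus noise. Either alignment is good, which gives the Case~1 bound, or $\mathbf x_k$ has bad alignment, which together with $\|\nabla g\|^2\le\tau$ forces $\mathbf x_k\in\mathcal R_k$, a contradiction.

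Case 3: false alarm, i.e.\ $\|\nabla g\|^2>\tau$ but $\|\nabla\hat g_k\|^2\le\tau/2$. This forces $\|\nabla\hat g_k-\nabla g\|\ge(1-1/\sqrt2)\sqrt\tau$. By Chebyshev its probability is $\mathcal O(\nu_g^2/(N_g\tau))=\mathcal O(K^{-3/2})$. The error on this event is $\mathcal O(\lambda_k\|\nabla g\|)=\mathcal O(G_f+\beta)$, using the independence of the detector batch, so its contribution is negligible.

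Case 4: $\mathbf x_k\in\mathcal R_k$. Here I use Assumption~\ref{assump:stoch-bound} to bound $\|e_d\|$ and its conditional moments by $\mathcal O(\beta+G_f)$. For the $\tilde\lambda\nabla\tilde g$ part this uses $|\tilde\lambda|\|\nabla\tilde g\|\le\beta C_g+\|\nabla\tilde f\|$, since $\|\nabla\tilde g\|^2/(\|\nabla\tilde g\|^2+\gamma)\le1$.

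Combining the cases, the averaged error sum becomes
\[
\mathcal O(K^{-1/2})+\frac{1}{\beta}\cdot\frac1K\sum_k\mathbb P(\mathbf x_k\in\mathcal R_k)\,\mathcal O(1)=\mathcal O\big(K^{-1/2}+K^{1/4-\varsigma}\big)
\]
under Assumption~\ref{assump:rare-visit}. The $1/\beta$ factor arises only in the $g$-inequality, where the bias is squared, and after multiplying through by $1/\beta$ it gives the $K^{1/2-\varsigma}$ term. I expect this bookkeeping, namely which power of $\beta,\eta$ multiplies the rare-region mass in each inequality, to be the main obstacle. I would verify that the worst exponent is $\frac12-\varsigma$, which matches the statement. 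Choosing $k^*$ as the argmin of the averaged sum then gives both bounds.

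For the complexities, take $\delta=\varsigma-\tfrac12$, so the rate is $\mathcal O(K^{-\min(1/2,\delta)})=\mathcal O(K^{-\delta})$ since $\delta\le\tfrac12$. Setting $K^{-\delta}=\epsilon$ gives $K=\epsilon^{-1/\delta}$. Then $\mathbf d_{k^*}=\nabla f+\lambda_{k^*}\nabla g$ with $\lambda_{k^*}\ge0$ certifies Definition~\ref{def:stationary} in expectation. The sample counts are $KN_f=K^2=\epsilon^{-2/\delta}$ and $KN_g=K^3=\epsilon^{-3/\delta}$; the detector batches can be taken of the same size $N_g$, which does not change the order. If $\mathcal R_k$ is empty, the sum in Assumption~\ref{assump:rare-visit} is $0$, so $\varsigma=1$ and the bounds become $\epsilon^{-4}$ and $\epsilon^{-6}$.
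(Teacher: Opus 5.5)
Your proposal is correct and follows the paper's proof essentially step for step: the same four-event partition (detector off; false alarm with $\|\nabla g(\mathbf x_k)\|^2>\tau$, whose probability is $\mathcal O(\nu_g^2/(N_g\tau))$ by Chebyshev; detector on with $\|\nabla g(\mathbf x_k)\|^2\le\tau$, which forces $\lambda_k\le\beta+\theta$; and the true bad region, bounded via Assumption~\ref{assump:stoch-bound} and summed via Assumption~\ref{assump:rare-visit}), fed into the combined descent bound. In that bound, the $1/(\eta\beta)=K^{1/2}$ amplification of the squared bias is what produces the $K^{\frac12-\varsigma}$ term. The only slip is in your displayed intermediate inequality: once $\|\nabla g(\mathbf x_k)\|^2$ appears on the left with unit weight, the squared-bias coefficient should already be $1/\beta^2=1/(\eta\beta)$, not $1/\beta$, so the intermediate $K^{\frac14-\varsigma}$ has no meaning; you correct this in the next sentence and arrive at the paper's exponent.
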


\begin{remark}\label{rem:quadratic}

Assumption~\ref{assump:rare-visit} holds with $\varsigma=1$ for a natural class of problems in which the bad region is empty. Consider the quadratic lower-level objective $g(\mathbf y)=\frac{1}{2}\norm{A\mathbf y-\mathbf b}^2$ and assume that $\nabla f(\mathbf y)$ belongs to the kernel of $A$ for every $\mathbf y$ satisfying $\norm{\nabla g(\mathbf y)}^2\leq \tau$. Since $\nabla g(\mathbf y)=A^\top (A\mathbf y-\mathbf b)$ and $A\nabla f(\mathbf y)=0$, we obtain $\nabla g(\mathbf y)^\top \nabla f(\mathbf y)=(A\mathbf y-\mathbf b)^\top A\nabla f(\mathbf y)=0$. Hence, at every point satisfying $\norm{\nabla g(\mathbf y)}^2\leq \tau$, membership in $\mathcal R_k(\tau,\theta)$ would require $0<-\theta\norm{\nabla g(\mathbf y)}^2$, which is impossible. Therefore, $\mathcal R_k(\tau,\theta)=\emptyset$ for any $k\geq 0$ and any $\theta>0$, and Theorem~\ref{thm:theorem2} yields the rate $\mathcal O(K^{-\frac{1}{2}})$ together with the sample complexities $\mathcal O(\epsilon^{-4})$ and $\mathcal O(\epsilon^{-6})$. Moreover, in this example the uniform bound on $\nabla g$ in Assumption~\ref{assump:stoch-bound} is not restrictive, since the analysis only uses this bound along the generated trajectory. Indeed, $g$ satisfies the Polyak Lojasiewicz condition, and combining the descent inequality of $g$ with the parameter selection of Theorem~\ref{thm:theorem2} gives $\sup_{k\leq K}\mathbb E\left[g(\mathbf x_k)-g^*\right]=\mathcal O(1)$ uniformly in $K$, which in turn implies $\mathbb E\left[\norm{\nabla \tilde g(\mathbf x_k,\xi_g)}\right]\leq \sqrt{2L_g\sup_{k\leq K}\mathbb E\left[g(\mathbf x_k)-g^*\right]}+\nu_g\triangleq C_g<\infty$. Hence, the only global requirement in this example is the boundedness of $\nabla f$.

\end{remark}


\section{Penalty-regularized SDBPG}\label{sec:pr-sdbpg}

The convergence guarantee in Theorem~\ref{thm:theorem2} relies on Assumption~\ref{assump:rare-visit}, which limits visits to the unstable bad region where the lower-level gradient is small and the gradients of the two levels are negatively aligned. Specifically, it requires the cumulative probability of visiting the bad region  to grow slower than the horizon $K$. Although weaker than assuming $\|\nabla g(\mathbf{x}_k)\|$ is uniformly bounded away from zero, this condition still affects the convergence rate through the average bad-region visit frequency, whose value may not be available a priori. In this section, we present an alternative approach that eliminates this assumption entirely by redesigning the subproblem.


\subsection{Regularized subproblem}

Rather than regularizing the denominator of the dual variable post-hoc via $\gamma_k$, we replace the constrained QP~\eqref{eq:QP} with a \emph{penalty-regularized} unconstrained objective:
\begin{align}\label{eq:reg-QP}
    \mathbf{d}_k = \argmin_{\mathbf{d} \in \mathbb{R}^n} \frac{1}{2}\|\mathbf{d} - \nabla f(\mathbf{x}_k)\|^2 + \frac{\mu}{2}  \frac{\left[\beta(\|\nabla g(\mathbf{x}_k)\|^2+\gamma)-\nabla g(\mathbf{x}_k)^\top \mathbf{d} \right]_+^2}{\|\nabla g(\mathbf{x}_k)\|^2 + \gamma},
\end{align}
where $\mu > 0$ is the penalty weight and $\gamma > 0$ is a \emph{fixed} regularization parameter. The penalty term penalizes the \emph{relative} violation of the DBGD constraint $\nabla g(\mathbf{x}_k)^\top \mathbf{d} \geq \beta\|\nabla g(\mathbf{x}_k)\|^2$, with the normalization by $\|\nabla g(\mathbf{x}_k)\|^2 + \gamma$ ensuring scale-invariance across all gradient magnitudes. Setting the gradient of~\eqref{eq:reg-QP} to zero yields a closed-form solution 
\begin{align}\label{eq:d-reg}
    \mathbf{d}_k = \nabla f(\mathbf{x}_k) + \hat\lambda_k \nabla g(\mathbf{x}_k), \qquad \hat\lambda_k = \left[\frac{\mu\left(\beta(\|\nabla g(\mathbf{x}_k)\|^2 + \gamma) - \nabla g(\mathbf{x}_k)^\top \nabla f(\mathbf{x}_k)\right)}{(1+\mu)\|\nabla g(\mathbf{x}_k)\|^2 + \gamma}\right]_+.
\end{align}

The critical structural difference from~\eqref{eq:tild-lambda1} is that the denominator $(1+\mu)\|\nabla g(\mathbf{x}_k)\|^2 + \gamma$ is \emph{always} bounded below by $\gamma > 0$, regardless of $\|\nabla g(\mathbf{x}_k)\|$. This eliminates the degeneracy that necessitated the rare-visit assumption. In particular, we first obtain a bound for $\hat{\lambda}_k$ uniform in $\|\nabla g(\mathbf x_k)\|$. 


\begin{lemma}[Uniform boundedness of the multiplier]\label{lem:pr-lambda-bounded}
Under Assumption~\ref{assum:bounded-stoch-g}, the regularized multiplier satisfies $0\leq \hat{\lambda}_k \leq C_\Lambda \triangleq \mu\beta + \frac{\mu G_f}{2\sqrt{(1+\mu)\gamma}}$ for all $k\geq 0$. 
\end{lemma}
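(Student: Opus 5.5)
We work directly from the closed form~\eqref{eq:d-reg}. Nonnegativity is immediate from the positive part $[\cdot]_+$. For the upper bound we drop the positive part and bound the unclipped expression: if it is negative then $\hat\lambda_k=0\le C_\Lambda$, and otherwise $\hat\lambda_k$ equals the unclipped ratio. Write $s\triangleq\|\nabla g(\mathbf x_k)\|\ge 0$. We split the ratio into two pieces and bound each separately:
\[
\frac{\mu\beta(s^2+\gamma)}{(1+\mu)s^2+\gamma}
\;-\;
\frac{\mu\,\nabla g(\mathbf x_k)^\top\nabla f(\mathbf x_k)}{(1+\mu)s^2+\gamma}.
\]

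For the first piece, since $\mu>0$ we have $(1+\mu)s^2+\gamma\ge s^2+\gamma$. Hence this piece is at most $\mu\beta$.

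For the second piece, we apply Cauchy--Schwarz together with Assumption~\ref{assum:bounded-stoch-g}. This gives $-\nabla g(\mathbf x_k)^\top\nabla f(\mathbf x_k)\le s\,G_f$, so the second piece is at most $\mu G_f\, s/((1+\mu)s^2+\gamma)$.

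It then remains to maximize $h(s)=s/(as^2+\gamma)$ over $s\ge0$, where $a=1+\mu$. By AM--GM, $as^2+\gamma\ge 2\sqrt{a\gamma}\,s$. Therefore $h(s)\le 1/(2\sqrt{(1+\mu)\gamma})$, with equality at $s=\sqrt{\gamma/(1+\mu)}$. Multiplying by $\mu G_f$ yields the second term of $C_\Lambda$.

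Adding the two bounds gives $\hat\lambda_k\le \mu\beta+\frac{\mu G_f}{2\sqrt{(1+\mu)\gamma}}=C_\Lambda$. This bound is uniform in $k$ and in $\|\nabla g(\mathbf x_k)\|$. There is no real obstacle here. The only point needing care is the AM--GM step: it is what converts the potentially singular factor $s/(\cdot)$ into a bound independent of $s$, and it uses $\gamma>0$ essentially. The bound on $\|\nabla g\|$ from Assumption~\ref{assum:bounded-stoch-g} is not needed.
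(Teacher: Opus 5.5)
Your proposal is correct and follows the paper's proof essentially verbatim. The paper also splits the numerator, bounds the $\beta$-part by $\mu\beta$ using $(1+\mu)\|\nabla g(\mathbf x_k)\|^2+\gamma\ge \|\nabla g(\mathbf x_k)\|^2+\gamma$, and bounds the cross term by Cauchy--Schwarz plus AM--GM on the denominator. Your handling of the positive part is slightly more explicit, and the first bound implicitly uses $\beta\ge 0$, which holds for the barrier parameter.
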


\begin{remark}[Recovery of DBGD behavior]
When $\|\nabla g(\mathbf x_k)\|^2\gg\gamma$, the clipped regularized multiplier satisfies $\hat\lambda_k\approx \frac{\mu}{1+\mu}
    \left[
    \beta-\frac{\nabla g(\mathbf x_k)^\top\nabla f(\mathbf x_k)}
    {\|\nabla g(\mathbf x_k)\|^2}
    \right]_+$, which is a damped version of the DBGD multiplier in~\eqref{eq:tild-lambda1}.
When $\nabla g(\mathbf x_k)=0$, the direction is still
$\mathbf d_k=\nabla f(\mathbf x_k)$ because the correction term
$\hat\lambda_k\nabla g(\mathbf x_k)$ vanishes, while the multiplier remains
finite.
\end{remark}

\begin{algorithm}[t!]
\caption{Penalty-Regularized Stochastic DBGD (PR-SDBPG)}
\label{alg:pr-sdbpg}
\begin{algorithmic}[1]
\State \textbf{Input:} step size $\eta$, barrier $\beta$, penalty $\mu$, regularization $\gamma$, batch sizes $N_f, N_g$
\State \textbf{Initialize:} $\mathbf{x}_0 \in \mathbb{R}^n$
\For{$k = 0, 1, \ldots, K-1$}
    \State Draw independent mini-batches $\xi_f^k, \xi_g^k$ of sizes $N_f$ and $N_g$
    \State Compute $\nabla \tilde{f}_k = \frac{1}{N_f}\sum_{i=1}^{N_f} \nabla \tilde f(\mathbf{x}_k, \xi_{f,i}^k)$, $\nabla \tilde{g}_k = \frac{1}{N_g}\sum_{i=1}^{N_g} \nabla \tilde g(\mathbf{x}_k, \xi_{g,i}^k)$
    \State $\tilde\lambda_k \gets \left[\frac{\mu\left(\beta(\|\nabla \tilde{g}_k\|^2 + \gamma) - \nabla \tilde{g}_k^\top \nabla \tilde{f}_k\right)}{(1+\mu)\|\nabla \tilde{g}_k\|^2 + \gamma}\right]_+$
    \State $\tilde{\mathbf{d}}_k \gets \nabla \tilde{f}_k + \tilde\lambda_k\, \nabla \tilde{g}_k$
    \State $\mathbf{x}_{k+1} \gets \mathbf{x}_k - \eta\,\tilde{\mathbf{d}}_k$
\EndFor
\end{algorithmic}
\end{algorithm}

The stochastic algorithm replaces the exact gradients in~\eqref{eq:d-reg} with mini-batch estimates and is summarized in Algorithm~\ref{alg:pr-sdbpg}. It has the same single-loop structure as Algorithm~\ref{alg:alg1}, with two key differences: (i) the denominator $(1+\mu)\|\nabla \tilde{g}_k\|^2 + \gamma$ is always at least $\gamma > 0$, while in Algorithm~\ref{alg:alg1} the denominator $\|\nabla \tilde{g}_k\|^2 + \gamma_k$ can become arbitrarily small based on the selection $\gamma_k$  in Proposition~\ref{thm:theorem1} which depends on $\|\nabla g(\mathbf x_k)\|^2$; (ii) the batch sizes $N_f, N_g$ are \emph{fixed constants} set a priori while Algorithm~\ref{alg:alg1} requires choosing $\gamma_k$ and $N_g$ as functions of $\|\nabla g(\mathbf{x}_k)\|$ as in Proposition~\ref{thm:theorem1}.

\subsection{Convergence results}

\begin{theorem}[Convergence of PR-SDBPG]\label{thm:pr-sdbpg}
Under Assumptions~\ref{assump:gradf-g-lip}--\ref{assum:bounded-stoch-g} and~\ref{assump:stoch-bound}, let Algorithm~\ref{alg:pr-sdbpg} run with parameters
$\mu = 1$, $\beta = 4G_f^3/\epsilon$, $\gamma = 1$, $\eta = \epsilon/(5G_f^3(L_f + cL_g))$,
where $c = C_\Lambda + 1 = \mathcal{O}(G_f^3/\epsilon)$, and batch sizes
    $N_f = \mathcal{O}\!\left(\frac{\nu_f^2 G_f^6 C_g^2}{\epsilon^4}\right)$, $N_g = \mathcal{O}\!\left(\frac{G_f^{12}\nu_g^2 C_g^2}{\epsilon^6}\right)$. 
Then after $K = \mathcal{O}(G_f^9 L_g / \epsilon^4)$ iterations, there exists an index $k^*\in\{0,\hdots,K-1\}$ such that
\begin{align}\label{eq:pr-conv}
    \mathbb{E}\big[\|\mathbf{d}_{k^*}\|^2\big] \leq \mathcal{O}(\epsilon), \qquad \mathbb{E}\big[\|\nabla g(\mathbf{x}_{k^*})\|^2\big] \leq \mathcal{O}(\epsilon).
\end{align}
Moreover, the upper- and lower-level sample complexities are $\mathcal{O}(\epsilon^{-8})$ and $\mathcal{O}(\epsilon^{-10})$, respectively. 
\end{theorem}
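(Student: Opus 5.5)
We analyze Algorithm~\ref{alg:pr-sdbpg} exactly as SDBPG, with the regularized direction $\mathbf d_k=\nabla f(\mathbf x_k)+\hat\lambda_k\nabla g(\mathbf x_k)$ from \eqref{eq:d-reg} as the reference and error $e_d\triangleq\tilde{\mathbf d}_k-\mathbf d_k$. The first step is a bias and second-moment bound for $e_d$ analogous to Lemma~\ref{lem:bias-ed-gradg}, with no perturbation term because the reference multiplier is $\hat\lambda_k$ itself. We use the joint Lipschitz argument for $T(u,v)=\big[\mu(\beta(\|v\|^2+\gamma)-\langle u,v\rangle)\big]_+\,v/((1+\mu)\|v\|^2+\gamma)$. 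Since the denominator is at least $\gamma=1$, the map $v\mapsto T(u,v)$ is Lipschitz with constant $\mathcal O(\mu\beta+\mu\|u\|)$, and $u\mapsto T(u,v)$ is Lipschitz with constant $\mathcal O(\mu)$. Combined with Assumption~\ref{assump:stoch-bound}, this gives
\begin{align*}
\|\mathbb E_k[e_d]\|\lesssim \frac{\nu_f}{\sqrt{N_f}}+ (\beta+G_f)\frac{\nu_g}{\sqrt{N_g}},\qquad
\mathbb E_k\|e_d\|^2\lesssim \frac{\nu_f^2}{N_f}+(\beta+G_f)^2\frac{\nu_g^2}{N_g}.
\end{align*}
The factor $C_g$ enters through $\|\tilde\lambda_k\nabla\tilde g_k\|$ when the numerator fluctuates.

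Next we derive the two descent inequalities. For $f$, smoothness gives
\[
\mathbb E_k f(\mathbf x_{k+1})\le f(\mathbf x_k)-\eta\langle\nabla f,\mathbf d_k+\mathbb E_k e_d\rangle+\tfrac{L_f\eta^2}{2}\mathbb E_k\|\tilde{\mathbf d}_k\|^2 .
\]
Write $\langle\nabla f,\mathbf d_k\rangle=\|\mathbf d_k\|^2-\hat\lambda_k\langle\nabla g,\mathbf d_k\rangle$. The first-order optimality of \eqref{eq:reg-QP} yields $\hat\lambda_k=\mu[s_k]_+/(\|\nabla g\|^2+\gamma)$, where $s_k=\beta(\|\nabla g\|^2+\gamma)-\nabla g^\top\mathbf d_k$ is the constraint slack. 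Hence $-\hat\lambda_k\langle\nabla g,\mathbf d_k\rangle\ge-\hat\lambda_k\beta(\|\nabla g\|^2+\gamma)$, and this term is at most $C_\Lambda\beta(C_g^2+1)$ by Lemma~\ref{lem:pr-lambda-bounded}.

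For $g$, the same identity gives
\[
\langle\nabla g,\mathbf d_k\rangle=\beta(\|\nabla g\|^2+\gamma)-s_k .
\]
From the closed form, whenever $s_k>0$ we have $s_k=\hat\lambda_k(\|\nabla g\|^2+\gamma)/\mu$, so $\langle\nabla g,\mathbf d_k\rangle\ge \beta\|\nabla g\|^2-\hat\lambda_k(\|\nabla g\|^2+\gamma)$ for $\mu=1$. Descent on $g$ along $\tilde{\mathbf d}_k$ with $L_g$-smoothness then yields
\[
\beta\|\nabla g\|^2\le \tfrac{2}{\eta}\Delta g_k+\hat\lambda_k(\|\nabla g\|^2+\gamma)+\text{error terms}.
\]

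The main obstacle is that this residual $\hat\lambda_k(\|\nabla g\|^2+\gamma)$ does not vanish. With $\gamma=1$, the penalty does not enforce the constraint exactly. The resolution is to choose $\beta=\Theta(G_f^3/\epsilon)$ large and bound $\hat\lambda_k\|\nabla g\|^2$ relative to $\beta\|\nabla g\|^2$. Writing $\hat\lambda_k\le\tfrac12\big(\beta+G_f\|\nabla g\|/(2\|\nabla g\|^2+1)\big)$, the $\beta$-part absorbs half of the left-hand side. The remaining $G_f$-part is $\mathcal O(G_f)$, which after division by $\beta$ contributes $\mathcal O(G_f/\beta)=\mathcal O(\epsilon)$ to $\|\nabla g\|^2$. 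This explains why $\beta\propto 1/\epsilon$, and I would check the algebra carefully here.

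Similarly, in the $f$-inequality the bad term $\hat\lambda_k\beta$ should be handled by combining the two inequalities with weight $c=C_\Lambda+1$ on the $g$-inequality, i.e.\ by analyzing the merit function $f+cg$. This is why the step size satisfies $\eta\propto 1/(L_f+cL_g)$: the quadratic terms $\eta^2\|\tilde{\mathbf d}_k\|^2$ must then be dominated by $\eta\|\mathbf d_k\|^2$.

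Finally, we telescope over $k=0,\dots,K-1$. This gives
\[
\frac1K\sum_k\mathbb E\big[\|\mathbf d_k\|^2+\|\nabla g(\mathbf x_k)\|^2\big]\lesssim \frac{\Delta_f+c\Delta_g}{\eta K}+\epsilon+\text{(variance terms)} .
\]
Plugging in $\eta=\Theta(\epsilon/(G_f^3(L_f+cL_g)))$ and $c=\Theta(G_f^3/\epsilon)$, the first term is $\mathcal O(\epsilon)$ once $K=\mathcal O(G_f^9L_g/\epsilon^4)$, up to constants in $\Delta_f,\Delta_g$. The bias term is multiplied by $G_f$ and by $c$ through the cross term $\langle\nabla g,\mathbb E e_d\rangle$. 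Requiring the bias times $cC_g$ to be at most $\epsilon$ forces $N_f\gtrsim\nu_f^2G_f^6C_g^2/\epsilon^4$ and $N_g\gtrsim\beta^2G_f^6\nu_g^2C_g^2/\epsilon^4=G_f^{12}\nu_g^2C_g^2/\epsilon^6$. Selecting $k^*$ as the argmin of the averaged quantity gives \eqref{eq:pr-conv}. The sample complexities follow from $KN_f=\mathcal O(\epsilon^{-8})$ and $KN_g=\mathcal O(\epsilon^{-10})$.
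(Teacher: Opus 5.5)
Your architecture matches the paper's: the reference direction $\mathbf d_k$ from \eqref{eq:d-reg} (so no perturbation bias appears), joint-Lipschitz bias and variance bounds with constant $\mathcal O(\beta+G_f)$ on the $g$-noise, the merit function $f+cg$ with $c=C_\Lambda+1$, a separate $g$-telescope, and batch sizes fixed by $G_\Phi B_{\max}\lesssim\epsilon$ with $G_\Phi=G_f+cC_g$. Your batch-size and iteration bookkeeping also agrees with the paper. The gap is that the two estimates that actually produce the $\mathcal O(\epsilon)$ floors are either missing or wrong.

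First, for $f+cg$ one has exactly $\nabla\Phi(\mathbf x_k)^\top\mathbf d_k=\|\mathbf d_k\|^2+(c-\hat\lambda_k)y_k$ with $y_k\triangleq\nabla g(\mathbf x_k)^\top\mathbf d_k$. This cross term cannot be controlled by adding separately bounded $f$- and $g$-inequalities. Your $f$-side bound $-\hat\lambda_k y_k\ge-\hat\lambda_k\beta(\|\nabla g(\mathbf x_k)\|^2+\gamma)$ can be of order $\beta^2$: at $\nabla g(\mathbf x_k)\approx 0$ you have $\hat\lambda_k\approx\beta$. Even the correct bound $y_k\ge-\Delta_0$ only gives $(c-\hat\lambda_k)y_k\ge-c\Delta_0=-\Theta(G_f^2)$, which does not shrink with $\epsilon$.

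The missing idea is the sign structure of the cross term. Since $c-\hat\lambda_k\ge 1$, only $y_k<0$ matters, and then clipping is inactive. Write $s_k=\|\nabla g(\mathbf x_k)\|^2$ and $a_k=\nabla g(\mathbf x_k)^\top\nabla f(\mathbf x_k)$. The identity $y_k=\frac{(s_k+\gamma)(a_k+\mu\beta s_k)}{(1+\mu)s_k+\gamma}$ gives $a_k+\mu\beta s_k<0$, which forces $\hat\lambda_k>\mu\beta$. Hence $0\le c-\hat\lambda_k\le C_\Lambda+1-\mu\beta=1+\frac{\mu G_f}{2\sqrt{(1+\mu)\gamma}}$, independent of $\beta$. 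Meanwhile $-y_k\le G_f\sqrt{s_k}-\mu\beta s_k\le\frac{G_f^2}{4\mu\beta}$. Together these give $(c-\hat\lambda_k)y_k\ge-\delta_c$ with $\delta_c=\mathcal O\big((1+G_f)G_f^2/\beta\big)=\mathcal O(\epsilon)$. This cross-term bound is what dictates $\beta\propto G_f^3/\epsilon$.

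Second, your $g$-side argument fails as written. The inequality $\nabla g(\mathbf x_k)^\top\mathbf d_k\ge\beta\|\nabla g(\mathbf x_k)\|^2-\hat\lambda_k(\|\nabla g(\mathbf x_k)\|^2+\gamma)$ discards the term $\beta\gamma$. The bound $\hat\lambda_k\le\frac12\big(\beta+\cdots\big)$ is also false: for $\mu=\gamma=1$ the $\beta$-part of $\hat\lambda_k$ is $\beta(s_k+1)/(2s_k+1)$, which tends to $\beta$ as $s_k\to 0$. Even granting your bound, the residual $\hat\lambda_k(\|\nabla g(\mathbf x_k)\|^2+\gamma)$ still contains $\frac{\beta}{2}\gamma$. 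After dividing by $\beta$ this leaves an $\mathcal O(1)$ floor on $\|\nabla g(\mathbf x_k)\|^2$, not $\mathcal O(\epsilon)$.

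The repair is again to keep the exact identity for $y_k$. Apply $\frac{1}{1+\mu}\le\frac{s_k+\gamma}{(1+\mu)s_k+\gamma}\le 1$ to the positive and negative parts separately. This gives $y_k\ge\frac{\mu\beta}{1+\mu}s_k-G_f\sqrt{s_k}\ge\tilde\beta s_k-\Delta_0$, with $\tilde\beta=\frac{\mu\beta}{2(1+\mu)}$ and $\Delta_0=\frac{(1+\mu)G_f^2}{2\mu\beta}$. The clipped case is immediate, since then $y_k=a_k\ge\beta s_k$. Then $\Delta_0/\tilde\beta=\mathcal O(\epsilon^2)$, and the $g$-telescope divided by $\tilde\beta$ yields the $\mathcal O(\epsilon)$ lower-level bound. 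In particular, the $1/\epsilon$ scaling of $\beta$ comes from the cross term, not from the $g$-inequality.
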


\begin{proof}[Proof sketch]
The proof uses a Lyapunov function $\Phi_k = f(\mathbf{x}_k) + c \: g(\mathbf{x}_k)$ and proceeds in two stages. First, we show that the Lyapunov descent $\Phi_k - \Phi_{k+1}$ controls $\|\mathbf{d}_k\|^2$ up to a neighborhood $\delta^* = \mathcal{O}(\epsilon)$. The key insight is that when $\nabla g(\mathbf{x}_k)^\top \mathbf{d}_k < 0$ (the problematic case), the multiplier $\hat\lambda_k$ is provably large ($> \mu\beta$), making the cross-term $(c - \hat\lambda_k)\nabla g(\mathbf{x}_k)^\top \mathbf{d}_k$ small. Second, a separate $g$-smoothness telescope gives $\|\nabla g(\mathbf{x}_k)\|^2 \leq \mathcal{O}(\epsilon)$. The stochastic error is controlled by the bias and second-moment bounds of $e_d = \tilde{\mathbf{d}}_k - \mathbf{d}_k$, which are uniform in $\|\nabla g(\mathbf{x}_k)\|$ because the denominator of the regularized multiplier is always $\geq \gamma$. Full details are in Appendix~\ref{sec:pr-sdbpg-proof}.
\end{proof}

\begin{remark}[Comparison with Theorem~\ref{thm:theorem2}]
Theorem~\ref{thm:pr-sdbpg} achieves $(\epsilon,\epsilon)$-stationarity \emph{without} the rare-visit Assumption~\ref{assump:rare-visit}, at the cost of a possibly larger sample complexities: $\mathcal{O}(\epsilon^{-8})$ and $\mathcal{O}(\epsilon^{-10})$ vs.\ $\mathcal{O}(\epsilon^{-4})$ and $\mathcal{O}(\epsilon^{-6})$ in Theorem~\ref{thm:theorem2} when $\delta=\tfrac12$. The bottleneck is the Lyapunov weight amplification: ensuring $\delta^* \leq \epsilon$ requires $\beta = \Omega(G_f^3/\epsilon)$, which forces $c = \Omega(G_f^3/\epsilon)$ and the bias requirement $G_\Phi B_{\max} \leq \epsilon$ demands batch sizes $N_g = \mathcal{O}(\epsilon^{-6})$. In exchange, PR-SDBPG uses fixed, deterministic batch sizes without any additional assumption. 
\end{remark}

To address the large mini-batches in PR-SDBPG, we consider implementing STORM-type variance reduction~\citep{cutkosky2019momentum} estimators to track the stochastic gradients. This preserves the convergence guarantee without Assumption~\ref{assump:rare-visit} and improves both sample complexities by factor of \(\epsilon^{-2}\) under an additional mean-square smoothness condition. The algorithm details and proofs are deferred to Appendix~\ref{app:vr_pr_sdbpg}.

\begin{theorem}[Variance-reduced PR-SDBPG]\label{rem:vr-pr-sdbpg}
Suppose the assumptions of Theorem~\ref{thm:pr-sdbpg} hold along with the mean-square smoothness condition in Assumption~\ref{ass:ms_smooth}.
Applying STORM-type variance reduction~\citep{cutkosky2019momentum} to Algorithm~\ref{alg:pr-sdbpg}, replacing i.i.d.\ gradient estimates with recursive momentum-based trackers with the same PR-SDBPG parameters, yields a $(\mathcal{O}(\epsilon), \mathcal{O}(\epsilon))$-stationary point with total sample complexities $\mathcal{O}(\epsilon^{-6})$ and $\mathcal{O}(\epsilon^{-8})$ for the upper and lower-level objectives, respectively.
\end{theorem}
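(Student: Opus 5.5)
We plan to reuse the entire PR-SDBPG argument of Theorem~\ref{thm:pr-sdbpg} and alter only the part that controls the stochastic direction error. In the proof of Theorem~\ref{thm:pr-sdbpg} the Lyapunov function $\Phi_k=f(\mathbf x_k)+c\,g(\mathbf x_k)$ satisfies a one-step inequality of the form
\[
\eta\big(\|\mathbf d_k\|^2-\delta^*\big)\lesssim \Phi_k-\mathbb E_k\Phi_{k+1}+\eta G_\Phi\|\mathbb E_k[e_d]\|+\eta^2 L_\Phi\,\mathbb E_k\|e_d\|^2,
\]
with $L_\Phi=L_f+cL_g$. There is a matching $g$-telescope for $\|\nabla g(\mathbf x_k)\|^2$. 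The mini-batch sizes enter this argument only through the requirement that $G_\Phi\|\mathbb E_k[e_d]\|\le\epsilon$ and $\eta L_\Phi\mathbb E_k\|e_d\|^2\le\epsilon$.

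In the variance-reduced method we replace $\nabla\tilde f_k,\nabla\tilde g_k$ with STORM trackers
\[
\mathbf u_k=\nabla\tilde f(\mathbf x_k,\xi^k)+(1-a)\big(\mathbf u_{k-1}-\nabla\tilde f(\mathbf x_{k-1},\xi^k)\big),
\]
and define $\mathbf v_k$ analogously, with mini-batch sizes $b_f,b_g$. We then form $\tilde\lambda_k$ and $\tilde{\mathbf d}_k$ from $(\mathbf u_k,\mathbf v_k)$. The first step is a deterministic perturbation bound. Since the denominator of $\hat\lambda_k$ is at least $\gamma=1$, the joint Lipschitz analysis of the map $T_\gamma$ (the same tool behind Lemma~\ref{lem:bias-ed-gradg}) gives
\[
\|\tilde{\mathbf d}_k-\mathbf d_k\|\le L_T\big(\|\boldsymbol\epsilon^f_k\|+\|\boldsymbol\epsilon^g_k\|\big),
\qquad
\boldsymbol\epsilon^f_k=\mathbf u_k-\nabla f(\mathbf x_k),\quad
\boldsymbol\epsilon^g_k=\mathbf v_k-\nabla g(\mathbf x_k).
\]
Here $L_T=\mathcal O(1+C_\Lambda)$, using the bounds $C_\Lambda$ from Lemma~\ref{lem:pr-lambda-bounded} and $C_g$. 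Because the trackers are biased, we will not split $e_d$ into bias and variance. Instead we bound the cross term directly by $G_\Phi\,\mathbb E\|e_d\|$ (or by Young's inequality against $\|\mathbf d_k\|^2$), so everything reduces to controlling $\frac1K\sum_k\mathbb E(\|\boldsymbol\epsilon^f_k\|^2+\|\boldsymbol\epsilon^g_k\|^2)$.

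The second step is the standard STORM recursion, which uses the mean-square smoothness of Assumption~\ref{ass:ms_smooth}:
\[
\mathbb E\|\boldsymbol\epsilon^f_{k+1}\|^2\le(1-a)^2\,\mathbb E\|\boldsymbol\epsilon^f_k\|^2+\frac{2a^2\nu_f^2}{b_f}+\frac{2\bar L_f^2\eta^2}{b_f}\,\mathbb E\|\tilde{\mathbf d}_k\|^2 .
\]
Summing over $k$ gives
\[
\frac1K\sum_k\mathbb E\|\boldsymbol\epsilon^f_k\|^2\lesssim \frac{\mathbb E\|\boldsymbol\epsilon^f_0\|^2}{aK}+\frac{a\nu_f^2}{b_f}+\frac{\eta^2}{ab_f}\cdot\frac1K\sum_k\mathbb E\|\tilde{\mathbf d}_k\|^2 .
\]
The bound for $\boldsymbol\epsilon^g$ is the same. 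By Assumption~\ref{assump:stoch-bound} and Lemma~\ref{lem:pr-lambda-bounded}, $\|\tilde{\mathbf d}_k\|=\mathcal O(C_\Lambda C_g+G_f)$, so the last term can be bounded crudely. Alternatively, it can be absorbed through the $\|\mathbf d_k\|^2$ term of the descent, provided $\eta^2/(ab)$ is small.

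The third step is the parameter choice, which is where we expect the main work. We keep $\mu,\beta,\gamma,\eta$ exactly as in Theorem~\ref{thm:pr-sdbpg}, so that $\eta=\Theta(\epsilon^2)$, $c=\Theta(\epsilon^{-1})$, and $K=\Theta(\epsilon^{-4})$. We initialize with one large batch, which makes $\boldsymbol\epsilon_0$ negligible. We then choose $a=\Theta(\eta)$ or a similar polynomial in $\epsilon$, with per-iteration batches $b_f,b_g$, so that $L_T^2 G_\Phi^2\cdot\frac1K\sum_k\mathbb E\|\boldsymbol\epsilon_k\|^2\le\epsilon^2$.

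The key point is that the error floor is now $a\nu^2/b+\eta^2/(ab)$ instead of $\nu^2/N$. With $a\sim\eta\sim\epsilon^2$ this gives a variance reduction factor of about $\epsilon^{2}$. The target is therefore $b_f=\mathcal O(\epsilon^{-2})$ and $b_g=\mathcal O(\epsilon^{-4})$, i.e.\ $\epsilon^{2}$ times the batches $N_f=\mathcal O(\epsilon^{-4})$ and $N_g=\mathcal O(\epsilon^{-6})$ of Theorem~\ref{thm:pr-sdbpg}. The hardest part is balancing the amplification constants $L_T=\mathcal O(\epsilon^{-1})$ and $G_\Phi=\mathcal O(\epsilon^{-1})$ against the $\eta^2/(ab)$ drift term so that exactly this $\epsilon^2$ saving survives. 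If the crude bound on $\|\tilde{\mathbf d}_k\|$ is too lossy, we will instead absorb the drift using $\mathbb E\|\tilde{\mathbf d}_k\|^2\le 2\|\mathbf d_k\|^2+2\mathbb E\|e_d\|^2$.

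Finally, we telescope both inequalities exactly as in Theorem~\ref{thm:pr-sdbpg} to obtain an index $k^*$ with $\mathbb E\|\mathbf d_{k^*}\|^2,\mathbb E\|\nabla g(\mathbf x_{k^*})\|^2=\mathcal O(\epsilon)$. The total sample counts are then $Kb_f=\mathcal O(\epsilon^{-6})$ and $Kb_g=\mathcal O(\epsilon^{-8})$.
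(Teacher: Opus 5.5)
Your skeleton is the paper's: the Lyapunov function $\Phi_k=f(\mathbf x_k)+c\,g(\mathbf x_k)$ with the cross-term bound $\delta_c$, a Lipschitz bound on the direction map (Lemma~\ref{lem:psi_lip}), the STORM recursion with $a\sim\eta$ (Lemma~\ref{lem:storm_tracking}), and handling the biased trackers by bounding the cross term by $G_\Phi\sqrt{\bar E^2}$. However, two quantitative steps, as you wrote them, do not yield the claimed rates.

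First, you put both tracking errors under a single constant $L_T=\mathcal O(1+C_\Lambda)=\mathcal O(\epsilon^{-1})$. You then require $L_T^2G_\Phi^2\cdot\frac1K\sum_k\mathbb E(\|\mathbf u_k-\nabla f(\mathbf x_k)\|^2+\|\mathbf v_k-\nabla g(\mathbf x_k)\|^2)\le\epsilon^2$. Since $G_\Phi=\Theta(\epsilon^{-1})$, this forces the averaged $f$-tracking error $\bar\sigma_f^2$ below $\epsilon^6$. The STORM floor is $\Theta(\eta/b_f)=\Theta(\epsilon^2/b_f)$, so you would need $b_f=\Omega(\epsilon^{-4})$ and would get $Kb_f=\mathcal O(\epsilon^{-8})$, not $\mathcal O(\epsilon^{-6})$. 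The asymmetric batch sizes you target, inherited from the asymmetry in Lemma~\ref{lem:G2}, require keeping the two Lipschitz constants separate. The map $(p,q)\mapsto p+\Lambda(p,q)q$ is $\bigl(1+\frac{\mu}{1+\mu}\bigr)$-Lipschitz in the $f$-argument, because $\|\partial_p(\Lambda(p,q)q)\|\le\frac{\mu\|q\|^2}{(1+\mu)\|q\|^2+\gamma}\le\frac{\mu}{1+\mu}$ (cf.~\eqref{eq:z-lip-bound}). It is $\Theta(\beta)=\Theta(\epsilon^{-1})$-Lipschitz only in the $g$-argument. The requirement then reads $\bar\sigma_f^2+\beta^2\bar\sigma_g^2\lesssim\epsilon^2/G_\Phi^2=\epsilon^4$, which gives exactly $b_f=\mathcal O(\epsilon^{-2})$ and $b_g=\mathcal O(\epsilon^{-4})$.

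Second, your primary (crude) treatment of the drift term $\frac{\eta^2}{ab}\cdot\frac1K\sum_k\mathbb E\|\tilde{\mathbf d}_k\|^2$ is neither justified nor sufficient. The trackers are not bounded random vectors: Assumption~\ref{assump:stoch-bound} controls only $\mathbb E\|\nabla\tilde f(\mathbf x,\xi)\|$ for a single sample, and Lemma~\ref{lem:pr-lambda-bounded} bounds $\hat\lambda_k$, not $\tilde\lambda_k$. Even granting $\mathbb E\|\tilde{\mathbf d}_k\|^2=\mathcal O(\epsilon^{-2})$, with $a\sim\eta\sim\epsilon^2$ the drift is of order $\eta\epsilon^{-2}/b=1/b$. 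This swamps $a\nu^2/b\sim\epsilon^2/b$ and cancels the whole variance-reduction gain. Re-optimizing $a$ only gives a floor $\sim\epsilon/b$, i.e.\ $\mathcal O(\epsilon^{-7})$ and $\mathcal O(\epsilon^{-9})$.

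So your fallback is mandatory, and it must be closed as a fixed-point argument, as in the paper. Set $M=\frac1K\sum_k\mathbb E\|\tilde{\mathbf d}_k\|^2$ and $D=\frac1K\sum_k\mathbb E\|\mathbf d_k\|^2$, and let $\bar E^2$ be the averaged direction error. Then:
\begin{itemize}
\item $\bar E^2\le a_0(1+M)$ with $a_0=\Theta(\epsilon^2/G_\Phi^2)$;
\item $M\le 2D+2\bar E^2$;
\item $D\le\frac{2\Delta_\Phi}{\eta K}+2\delta_c+2G_\Phi\sqrt{\bar E^2}+2L_\Phi\eta\bar E^2$.
\end{itemize}
Absorbing for small $\epsilon$ gives $D=\mathcal O(\epsilon)$, hence $M=\mathcal O(\epsilon)$, and the drift is then negligible. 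With these two corrections your argument coincides with the paper's proof.
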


\vspace*{-2mm}
\section{Numerical experiment}\label{Sec:numeric-exp}
\vspace*{-2mm}

We evaluate SDBPG, PR-SDBPG, and VR-PR-SDBPG against the deterministic counterpart (DDBPG) on LLM unlearning task~\cite{sekhari2021remember,liu2024towards,reisizadeh2026blur}, which seeks to remove undesirable data (e.g., sensitive, biased, or private information) from a pre-trained model while preserving its overall utility. The task involves two objectives: a \emph{forget} objective for removing targeted information and a \emph{retain} objective for maintaining performance on the remaining data. The bilevel formulation~\citep{reisizadeh2026blur} is
\begin{align*}
    \min_{\theta \in \Theta} \quad & f(\theta) := \mathbb{E}_{\xi_f}[\ell_{\mathrm{ret}}(y \mid x; \theta)]
    \quad \text{s.t.} \quad 
    \Theta = \arg\min_{\theta} \; g(\theta) := \mathbb{E}_{\xi_g}[\ell_{\mathrm{for}}(y \mid x; \theta)],
\end{align*}
where $\xi_f$ and $\xi_g$ are sampled from the retain and forget datasets, respectively. Here $\ell_{\mathrm{ret}}$ is the cross-entropy loss and $\ell_{\mathrm{for}}$ is the negative preference optimization (NPO) loss. Both objectives are nonconvex due to the neural network parameterization.

We conduct experiments on two benchmark datasets (TOFU and MUSE-News) using instruction-tuned LLMs. Stochastic methods use mini-batch gradients, whereas the deterministic variant uses fixed, large, or full batches. Full experimental details are deferred to Appendix~\ref{app:exp_details}.

We evaluate performance using the normalized metrics
$\frac{1}{K}\sum_{k=0}^{K-1}\|\nabla \tilde g(\mathbf{x}_k, \xi_g^k)\|^2$ and
$\frac{1}{K}\sum_{k=0}^{K-1}\|\tilde{\mathbf d}_k\|^2$,
each normalized by dividing them by their initial value, and plotted versus number of samples, wall-clock time, and iterations.

Figures~\ref{fig:tofu} show that the stochastic methods outperform DDBPG in terms of sample efficiency, achieving faster decay of both stationarity metrics across datasets. Among them, SDBPG exhibits the fastest convergence, followed by VR-PR-SDBPG and PR-SDBPG, aligning with the theoretical guarantees. When measured against wall-clock time, stochastic methods achieve lower stationarity measures with comparable computational budgets, demonstrating practical efficiency and stability.

\begin{figure}[t]
\centering

\begin{subfigure}{0.24\linewidth}
    \centering
    \includegraphics[width=\linewidth]{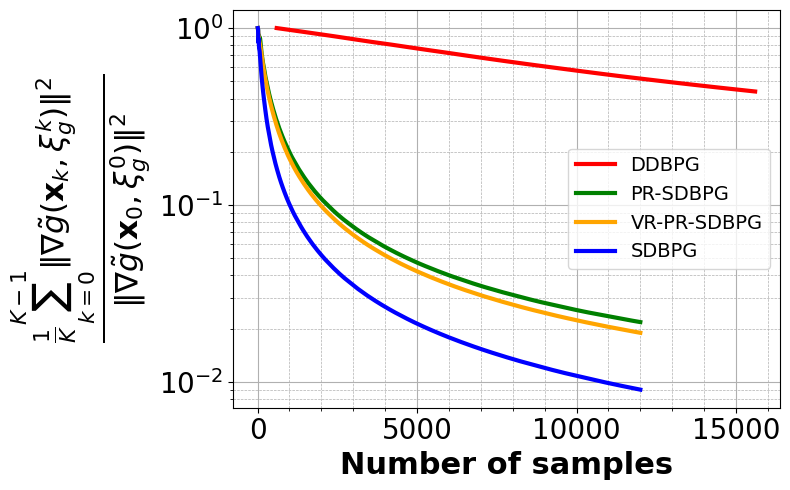}
\end{subfigure}
\hfill
\begin{subfigure}{0.24\linewidth}
    \centering
    \includegraphics[width=\linewidth]{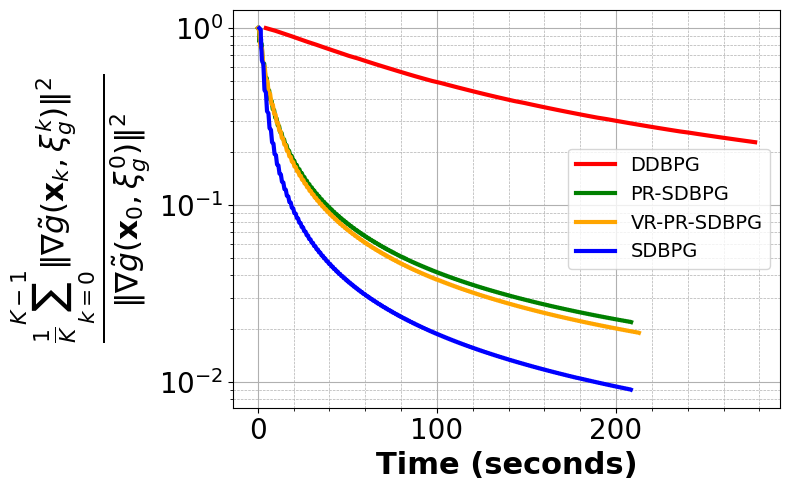}
\end{subfigure}
\begin{subfigure}{0.24\linewidth}
    \centering
    \includegraphics[width=\linewidth]{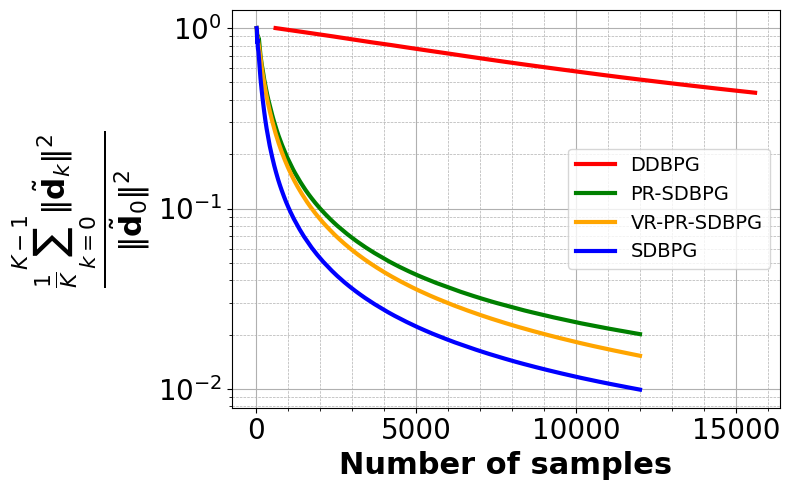}
\end{subfigure}
\hfill
\begin{subfigure}{0.24\linewidth}
    \centering
    \includegraphics[width=\linewidth]{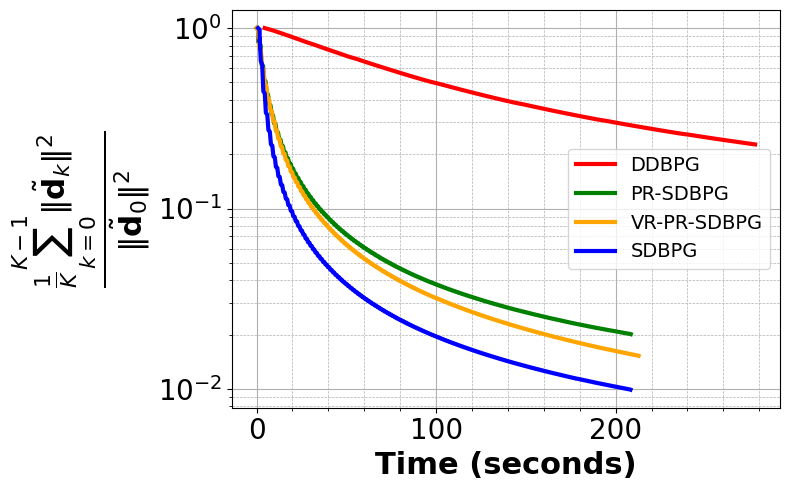}
\end{subfigure}
\begin{subfigure}{0.24\linewidth}
    \centering
    \includegraphics[width=\linewidth]{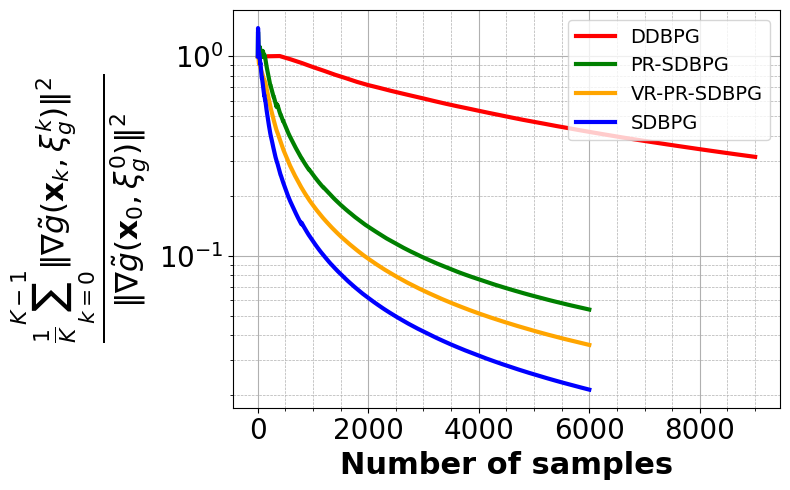}
\end{subfigure}
\hfill
\begin{subfigure}{0.24\linewidth}
    \centering
    \includegraphics[width=\linewidth]{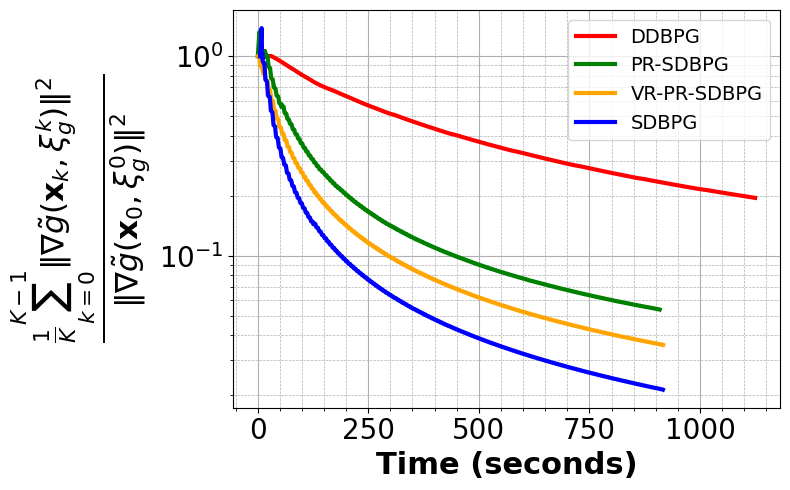}
\end{subfigure}
\begin{subfigure}{0.24\linewidth}
    \centering
    \includegraphics[width=\linewidth]{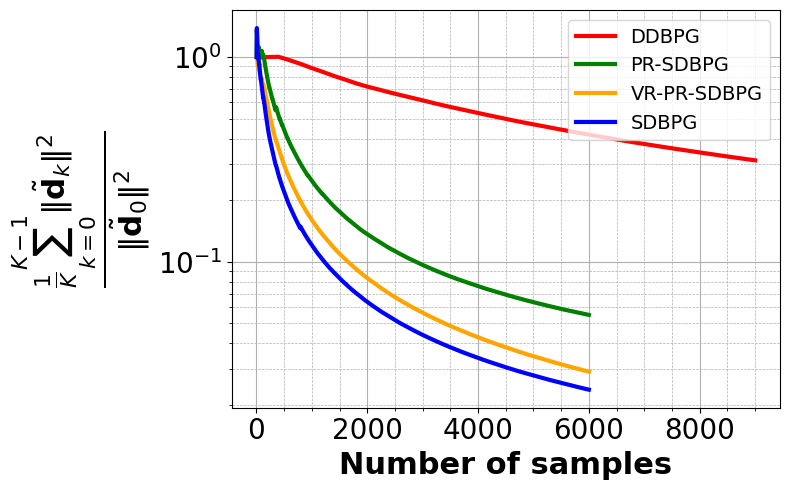}
\end{subfigure}
\hfill
\begin{subfigure}{0.24\linewidth}
    \centering
    \includegraphics[width=\linewidth]{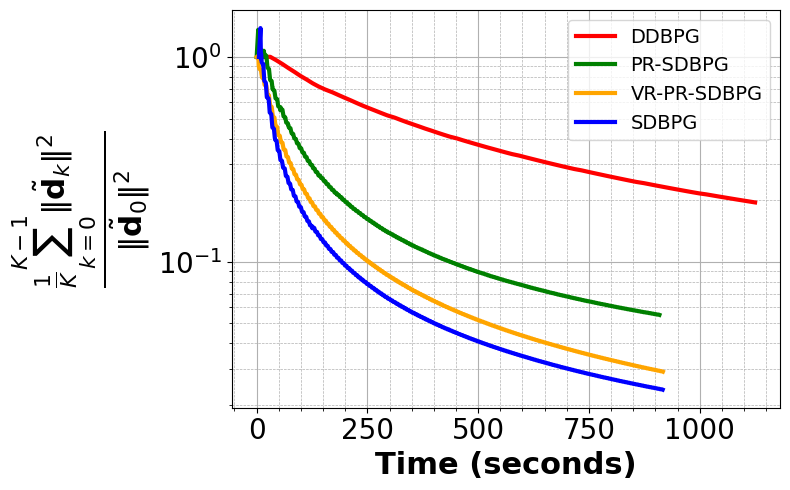}
\end{subfigure}
\caption{
Comparison of SDBPG (blue), PR-SDBPG (green), and VR-PR-SDBPG (orange) with the deterministic counterpart DDBPG (red) versus the number of sample gradients and wall-clock time (seconds). First row: TOFU dataset, second row: MUSE-News dataset.
}
\label{fig:tofu}
\end{figure}



\vspace*{-2mm}
\section{Conclusion}
\vspace*{-2mm}

In this work, we proposed SDBPG for stochastic nonconvex simple bilevel optimization, showing that dual perturbation stabilizes the stochastic DBGD direction. Under a rare-visit assumption with parameter $\delta = \varsigma - \tfrac{1}{2} \in (0, \tfrac{1}{2}]$, it yields $(\epsilon,\epsilon)$-stationarity with sample complexities $\mathcal{O}(\epsilon^{-2/\delta})$ and $\mathcal{O}(\epsilon^{-3/\delta})$ for the upper and lower levels, respectively, recovering $\mathcal{O}(\epsilon^{-4})$ and $\mathcal{O}(\epsilon^{-6})$ when the cumulative expected number of bad-region visits is bounded uniformly in $K$. Furthermore, we developed PR-SDBPG and VR-PR-SDBPG, which remove the rare-visit assumption with complexities
$\mathcal O(\epsilon^{-8}),\mathcal O(\epsilon^{-10})$ and
$\mathcal O(\epsilon^{-6}),\mathcal O(\epsilon^{-8})$, respectively.

\medskip

{
\small
\bibliographystyle{unsrtnat}
\bibliography{reference}

}

\newpage

\appendix

\section*{Technical Appendix and Supplementary Material}

\section{Auxiliary Lemmas}\label{AxilAppndx1}
\begin{lemma}\label{lem:lambda_eps_bound}
Let $\lambda_k$ and $\lambda_{\gamma,k}$ be defined as in
\eqref{eq:tild-lambda1} and \eqref{eq:lambda-eps1}, respectively.
Then, for any $\gamma_k \ge 0$, the following bound holds:
\begin{align}\label{eq:leps-lstar}
    \big|\lambda_{\gamma,k} - \lambda_k\big|
    \;\le\;
    \frac{\|\nabla f(\mathbf x_k)\|\,\gamma_k}
    {\|\nabla g(\mathbf x_k)\|\big(\|\nabla g(\mathbf x_k)\|^2 + \gamma_k\big)}.
\end{align}
\end{lemma}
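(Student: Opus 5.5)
The plan is a direct computation, using that $t\mapsto \max\{t,0\}$ is $1$-Lipschitz. Write $a\triangleq \nabla g(\mathbf x_k)^\top\nabla f(\mathbf x_k)$ and $s\triangleq \|\nabla g(\mathbf x_k)\|^2$, and assume $s>0$ so that $\lambda_k$ in \eqref{eq:tild-lambda1} is well defined. With $\phi(\mathbf x_k)=\beta_k s$, we have
\[
\lambda_k=\max\Big\{\beta_k-\tfrac{a}{s},0\Big\}\quad\text{and}\quad \lambda_{\gamma,k}=\max\Big\{\beta_k-\tfrac{a}{s+\gamma_k},0\Big\}.
\]
Both multipliers are the positive part of $\beta_k$ minus a ratio, and the $\beta_k$ term cancels. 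This is because the perturbed dual \eqref{eq:argmax-lameps} scales the barrier term by the same $s+\gamma_k$.

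Applying $|\max\{u,0\}-\max\{v,0\}|\le|u-v|$ gives
\[
|\lambda_{\gamma,k}-\lambda_k|\le \Big|\frac{a}{s}-\frac{a}{s+\gamma_k}\Big| = \frac{|a|\,\gamma_k}{s\,(s+\gamma_k)}.
\]
Cauchy--Schwarz then gives $|a|\le \|\nabla f(\mathbf x_k)\|\,\|\nabla g(\mathbf x_k)\|$. Cancelling one factor of $\|\nabla g(\mathbf x_k)\|$ against $s=\|\nabla g(\mathbf x_k)\|^2$ yields exactly \eqref{eq:leps-lstar}.

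There is no real obstacle. The only care needed is at degenerate points. If $\nabla g(\mathbf x_k)=0$, both $\lambda_k$ and the right-hand side are undefined, so the bound is read for $\|\nabla g(\mathbf x_k)\|>0$; equivalently, the product $|\lambda_{\gamma,k}-\lambda_k|\,\|\nabla g(\mathbf x_k)\|$ is what gets used later. If $\gamma_k=0$, both sides are zero and the bound holds trivially.
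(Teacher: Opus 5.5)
Your proposal is correct and follows essentially the same route as the paper's proof. Both arguments use the nonexpansiveness of $\max\{\cdot,0\}$ so that $\beta_k$ cancels, rewrite the difference of the two ratios as $\frac{|a|\gamma_k}{s(s+\gamma_k)}$, and finish with Cauchy--Schwarz. Your remark that the bound should be read for $\nabla g(\mathbf x_k)\neq 0$ is a small clarification the paper leaves implicit in the lemma itself (it adopts this convention later, in Lemma~\ref{lem:lambda-bad-region-bound}).
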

\begin{proof}
By definitions \eqref{eq:tild-lambda1} and \eqref{eq:lambda-eps1},
\[
\lambda_k
= \max\!\left\{
\beta_k - \frac{\nabla g(\mathbf x_k)^\top \nabla f(\mathbf x_k)}{\|\nabla g(\mathbf x_k)\|^2},\,0
\right\},
\qquad
\lambda_{\gamma,k}
= \max\!\left\{
\beta_k - \frac{\nabla g(\mathbf x_k)^\top \nabla f(\mathbf x_k)}
{\|\nabla g(\mathbf x_k)\|^2 + \gamma_k},\,0
\right\}.
\]
Using the nonexpansiveness of the projection onto $\mathbb{R}_+$, i.e.,
$|\max\{a,0\}-\max\{b,0\}|\le |a-b|$, we obtain
\[
\big|\lambda_{\gamma,k} - \lambda_k\big|
\le
\left| \frac{\nabla g(\mathbf x_k)^\top \nabla f(\mathbf x_k)}
{\|\nabla g(\mathbf x_k)\|^2 + \gamma_k} -
\frac{\nabla g(\mathbf x_k)^\top \nabla f(\mathbf x_k)}{\|\nabla g(\mathbf x_k)\|^2}
\right|.
\]
Rewriting the difference yields
\[
\left|
\frac{\nabla g(\mathbf x_k)^\top \nabla f(\mathbf x_k)\,\gamma_k}
{\|\nabla g(\mathbf x_k)\|^2(\|\nabla g(\mathbf x_k)\|^2 + \gamma_k)}
\right|.
\]
Applying the Cauchy--Schwarz inequality,
$|\nabla g(\mathbf x_k)^\top \nabla f(\mathbf x_k)|
\le \|\nabla g(\mathbf x_k)\|\,\|\nabla f(\mathbf x_k)\|$,
we arrive at \eqref{eq:leps-lstar}.
\end{proof}

\begin{lemma}\label{lem:lambda-ineq}
    Consider the expressions of $\lambda_k$ and $\lambda_{\gamma,k}$ in~\eqref{eq:tild-lambda1} and \eqref{eq:lambda-eps1}, respectively. If Assumption~\ref{assum:bounded-stoch-g} is satisfied, then
    \begin{align}\label{eq:lambda1}
        \lambda_k \leq \beta + \frac{G_f}{\|\nabla g(\mathbf x_k)\|}.
    \end{align}
    \begin{align}\label{eq:lambda1-eps}
        \lambda_{\gamma,k} &\leq \beta + \frac{G_f\|\nabla g(\mathbf x_k)\|}{\|\nabla g(\mathbf x_k)\|^2+\gamma_k} . 
    \end{align}
\end{lemma}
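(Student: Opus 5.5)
Both inequalities follow directly from the closed forms \eqref{eq:tild-lambda1} and \eqref{eq:lambda-eps1}, so the plan is to bound the quantity inside the $\max\{\cdot,0\}$ and then use that $\max\{a,0\}\le \max\{b,0\}$ whenever $a\le b$. I take $\phi(\mathbf x_k)=\beta\|\nabla g(\mathbf x_k)\|^2$ (with $\beta_k\equiv\beta$), as specified in Section~\ref{sec:sdbpg}, and assume $\nabla g(\mathbf x_k)\neq 0$ for \eqref{eq:lambda1}; otherwise $\lambda_k$ is undefined and the right-hand side is $+\infty$, so the bound holds trivially.

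For \eqref{eq:lambda1}, I write the unclipped expression as
\[
\frac{\beta\|\nabla g(\mathbf x_k)\|^2-\nabla f(\mathbf x_k)^\top\nabla g(\mathbf x_k)}{\|\nabla g(\mathbf x_k)\|^2}
= \beta - \frac{\nabla f(\mathbf x_k)^\top\nabla g(\mathbf x_k)}{\|\nabla g(\mathbf x_k)\|^2}.
\]
By Cauchy--Schwarz, $-\nabla f(\mathbf x_k)^\top\nabla g(\mathbf x_k)\le \|\nabla f(\mathbf x_k)\|\|\nabla g(\mathbf x_k)\|$. Assumption~\ref{assum:bounded-stoch-g} gives $\|\nabla f(\mathbf x_k)\|\le G_f$, so the expression is at most $\beta + G_f/\|\nabla g(\mathbf x_k)\|$. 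This bound is nonnegative, hence it also dominates the $0$ branch of the max, and \eqref{eq:lambda1} follows.

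For \eqref{eq:lambda1-eps}, the same argument applies to $\beta-\frac{\nabla g(\mathbf x_k)^\top\nabla f(\mathbf x_k)}{\|\nabla g(\mathbf x_k)\|^2+\gamma_k}$. Since the denominator is positive, Cauchy--Schwarz and the bound $G_f$ give the upper bound $\beta+\frac{G_f\|\nabla g(\mathbf x_k)\|}{\|\nabla g(\mathbf x_k)\|^2+\gamma_k}$. This is again nonnegative, so it also dominates the clipping at $0$. If $\nabla g(\mathbf x_k)=0$ and $\gamma_k>0$, then $\lambda_{\gamma,k}=\beta$ and the bound still holds.

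There is no real obstacle. The only care points are handling the degenerate case $\nabla g(\mathbf x_k)=0$, and checking that the monotonicity of $\max\{\cdot,0\}$ is legitimate because both upper bounds are nonnegative.
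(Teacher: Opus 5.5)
Your proof is correct and follows essentially the same route as the paper: bound the unclipped expression via Cauchy--Schwarz and $\|\nabla f(\mathbf x_k)\|\le G_f$, then use that this nonnegative bound also dominates the clipping at zero. Your handling of the degenerate case $\nabla g(\mathbf x_k)=0$ and your explicit nonnegativity check are small refinements that the paper leaves implicit.
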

\begin{proof}
Using Assumption \ref{assum:bounded-stoch-g} and \eqref{eq:tild-lambda1}, we can show

\begin{align*}
    \lambda_k &\leq \beta + \frac{|\langle\nabla f(\mathbf x_k),\nabla g(\mathbf x_k) \rangle|}{\|\nabla g(\mathbf x_k)\|^2}\nonumber\\
    & \leq \beta + \frac{G_f}{\|\nabla g(\mathbf x_k)\|}. 
\end{align*}
Moreover, using Assumption \ref{assum:bounded-stoch-g} and \eqref{eq:lambda-eps1}, we can show

\begin{align*}
    \lambda_{\gamma,k} &\leq \beta + \frac{|\langle\nabla f(\mathbf x_k),\nabla g(\mathbf x_k) \rangle|}{\|\nabla g(\mathbf x_k)\|^2 + \gamma_k}\nonumber\\
    & \leq \beta + \frac{G_f\|\nabla g(\mathbf x_k)\|}{\|\nabla g(\mathbf x_k)\|^2+\gamma_k}. 
\end{align*}
\end{proof}

\begin{lemma}\label{lem:deltaf-g1}
   Assume that Assumption~\ref{assump:gradf-g-lip} is satisfied, and let the sequence
$\{\mathbf x_k\}$ be generated by Algorithm \ref{alg:alg1}, using a fixed step-size $\eta_k \equiv \eta$ and a constant
parameter $\beta_k \equiv \beta$.
Let
$\Delta f_k \triangleq \mathbb E_k \left[f(\mathbf x_k)-f(\mathbf x_{k+1})\right]$ and
$\Delta g_k \triangleq \mathbb E_k \left[g(\mathbf x_k)-g(\mathbf x_{k+1})\right]$.
Then, the following results hold.
\begin{align}
    &\left(1-L_f \eta \right)\norm*{\mathbf d_k}^2 \nonumber\\
    & \quad\leq \frac{\Delta f_k}{\eta} + \lambda_k \beta \norm*{\nabla g(\mathbf x_k)}^2 + \norm*{\nabla f(\mathbf x_k)} \norm*{\mathbb E_k\left[e_d \right]} + \frac{L_f}{2}\eta \left(\norm*{\mathbb E_k\left[e_d \right]}^2 + \mathbb E_k\left[\norm*{e_d}^2\right] \right),\label{eq: f-bound1} \\
    &\beta\norm*{\nabla g(\mathbf x_k)}^2 \leq \frac{2\Delta g_k}{\eta} +2L_g \eta \norm*{\mathbf d_k}^2 + (\frac{1}{\beta}+L_g\eta)\norm*{\mathbb E_k\left[e_d \right]}^2 + L_g\eta \mathbb E_k\left[\norm*{e_d}^2 \right]. \label{eq:g-bound1}
\end{align}
\end{lemma}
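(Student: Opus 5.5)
The plan is to apply the descent lemma for $L_f$- and $L_g$-smooth functions to the update $\mathbf x_{k+1}=\mathbf x_k-\eta\tilde{\mathbf d}_k$, where $\tilde{\mathbf d}_k=\mathbf d_k+e_d$, and then take the conditional expectation $\mathbb E_k$. The ingredients are $\mathbf d_k$ being $\mathcal F_k$-measurable and the standard identity $\mathbb E_k\|\tilde{\mathbf d}_k\|^2=\|\mathbf d_k\|^2+2\langle \mathbf d_k,\mathbb E_k e_d\rangle+\mathbb E_k\|e_d\|^2$. Here $\mathbf d_k=\nabla f(\mathbf x_k)+\lambda_k\nabla g(\mathbf x_k)$ is the exact DBGD direction, and I will use two KKT facts about it. First, $\lambda_k\ge0$. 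Second, by complementary slackness, $\lambda_k(\nabla g_k^\top\mathbf d_k-\beta\|\nabla g_k\|^2)=0$ and $\nabla g_k^\top\mathbf d_k\ge\beta\|\nabla g_k\|^2$; both follow from the closed form \eqref{eq:tild-lambda1}.

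\textbf{For \eqref{eq: f-bound1}.} Smoothness gives $f(\mathbf x_{k+1})\le f(\mathbf x_k)-\eta\langle\nabla f_k,\tilde{\mathbf d}_k\rangle+\tfrac{L_f\eta^2}{2}\|\tilde{\mathbf d}_k\|^2$. Taking $\mathbb E_k$ and dividing by $\eta$ yields
\[
\langle\nabla f_k,\mathbf d_k\rangle\le \tfrac{\Delta f_k}{\eta}+\|\nabla f_k\|\|\mathbb E_k e_d\|+\tfrac{L_f\eta}{2}\mathbb E_k\|\tilde{\mathbf d}_k\|^2 .
\]
Next I rewrite $\langle\nabla f_k,\mathbf d_k\rangle=\|\mathbf d_k\|^2-\lambda_k\nabla g_k^\top\mathbf d_k=\|\mathbf d_k\|^2-\lambda_k\beta\|\nabla g_k\|^2$, using complementary slackness. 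For the quadratic term, Young's inequality gives $2\langle\mathbf d_k,\mathbb E_ke_d\rangle\le\|\mathbf d_k\|^2+\|\mathbb E_ke_d\|^2$, so $\mathbb E_k\|\tilde{\mathbf d}_k\|^2\le 2\|\mathbf d_k\|^2+\|\mathbb E_ke_d\|^2+\mathbb E_k\|e_d\|^2$. Moving $L_f\eta\|\mathbf d_k\|^2$ to the left side produces exactly the factor $(1-L_f\eta)$ together with the stated right-hand side.

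\textbf{For \eqref{eq:g-bound1}.} The same steps with $g$ give
\[
\langle\nabla g_k,\mathbf d_k\rangle+\langle\nabla g_k,\mathbb E_ke_d\rangle\le\tfrac{\Delta g_k}{\eta}+\tfrac{L_g\eta}{2}\mathbb E_k\|\tilde{\mathbf d}_k\|^2 .
\]
I then use the constraint $\nabla g_k^\top\mathbf d_k\ge\beta\|\nabla g_k\|^2$ and bound the cross term by Young's inequality, $|\langle\nabla g_k,\mathbb E_ke_d\rangle|\le\tfrac{\beta}{2}\|\nabla g_k\|^2+\tfrac{1}{2\beta}\|\mathbb E_ke_d\|^2$. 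This leaves $\tfrac{\beta}{2}\|\nabla g_k\|^2$ on the left. Multiplying through by 2 and using the same expansion bound on $\mathbb E_k\|\tilde{\mathbf d}_k\|^2$ gives $2L_g\eta\|\mathbf d_k\|^2+L_g\eta(\|\mathbb E_ke_d\|^2+\mathbb E_k\|e_d\|^2)+\tfrac1\beta\|\mathbb E_ke_d\|^2$, which matches the statement.

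\textbf{Main obstacle.} The only delicate point is deriving the identity $\lambda_k\nabla g_k^\top\mathbf d_k=\lambda_k\beta\|\nabla g_k\|^2$ and the feasibility inequality from the explicit formula. I will check two cases. If $\lambda_k>0$, then direct substitution gives $\nabla g_k^\top\mathbf d_k=\beta\|\nabla g_k\|^2$. If $\lambda_k=0$, then $\nabla g_k^\top\nabla f_k\ge\beta\|\nabla g_k\|^2$. When $\nabla g_k=0$, both sides vanish, taking $\lambda_k\nabla g_k=0$ by convention. Everything else is routine bookkeeping.
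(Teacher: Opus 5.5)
Your proposal is correct and matches the paper's proof essentially step for step. Both arguments apply the descent lemma to $\mathbf x_{k+1}=\mathbf x_k-\eta(\mathbf d_k+e_d)$, take conditional expectation, and use complementary slackness $\lambda_k\nabla g(\mathbf x_k)^\top\mathbf d_k=\lambda_k\beta\|\nabla g(\mathbf x_k)\|^2$ for the $f$-bound and feasibility $\nabla g(\mathbf x_k)^\top\mathbf d_k\ge\beta\|\nabla g(\mathbf x_k)\|^2$ for the $g$-bound. Both then apply Young's inequality to the cross terms with $\mathbb E_k[e_d]$ (weight $\beta/2$ for the $\nabla g$ term) and multiply the $g$-inequality by $2$. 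Your two-case check of these KKT facts from the closed form \eqref{eq:tild-lambda1} makes explicit what the paper only asserts from optimality of the QP.
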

\begin{proof}
From Assumption \ref{assump:gradf-g-lip}, $f$ has an $L_f$-Lipschitz continuous gradient, hence,
\begin{align*}
    f(\mathbf{x}_{k+1})-f(\mathbf{x}_k) \leq -\eta \nabla f(\mathbf{x}_k)^\top \tilde{\mathbf{d}}_k + \frac{L_f}{2}\eta^2\|\tilde{\mathbf d}_k\|^2.
\end{align*}
By adding and subtracting $\mathbf d_k$, we have:
\begin{align*}
    f(\mathbf{x}_{k+1})-f(\mathbf{x}_k) &\leq -\eta \nabla f(\mathbf{x}_k)^\top \left( \tilde{\mathbf{d}}_k-\mathbf d_k + \mathbf d_k \right) + \frac{L_f}{2}\eta^2\|\tilde{\mathbf d}_k - \mathbf d_k + \mathbf d_k\|^2\\
    & = -\eta \nabla f(\mathbf x_k)^\top \mathbf d_k - \eta \nabla f(\mathbf x_k)^\top \left(\tilde{\mathbf d}_k-\mathbf d_k \right) \nonumber\\
    &\quad+ \frac{L_f}{2}\eta^2 \left(\|\tilde{\mathbf d}_k-\mathbf d_k\|^2 + \|\mathbf d_k\|^2 + 2(\tilde{\mathbf d}_k-\mathbf d_k)^\top \mathbf d_k \right).
\end{align*}
Taking expectation given $\mathbf x_k$ from both sides, using Cauchy–Schwarz inequality and defining $e_d \triangleq \tilde{\mathbf d}_k-\mathbf d_k$, we obtain:
\begin{align*}
    \mathbb E_k\left[f(\mathbf{x}_{k+1})-f(\mathbf{x}_k)\right]&\leq -\eta \nabla f(\mathbf{x}_k)^\top \mathbf d_k + \frac{L_f}{2}\eta^2\| \mathbf d_k\|^2  + \eta \|\nabla f(\mathbf x_k)\| \|\mathbb E_k\left[e_d\right]\| \nonumber\\
    &\quad+ \frac{L_f}{2}\eta^2(\mathbb E_k\left[\|e_d\|^2\right] + 2\|\mathbb E_k\left[e_d\right]\| \|\mathbf d_k\| )\\
    & = -\eta \left(\nabla f(\mathbf x_k) - \mathbf d_k \right)^\top \mathbf d_k - \eta \left(1-\frac{L_f}{2}\eta \right)\|\mathbf d_k\|^2 + \eta\|\nabla f(\mathbf x_k)\| \|\mathbb E_k\left[e_d\right]\| \nonumber\\
    &\quad+ \frac{L_f}{2}\eta^2 \left(\mathbb E_k\left[\|e_d\|^2\right]+2\|\mathbb E_k\left[e_d\right]\|\|\mathbf d_k\| \right)\\
    &= \eta \lambda_k \nabla g(\mathbf x_k)^\top \mathbf d_k - \eta \left(1-\frac{L_f}{2}\eta \right)\|\mathbf d_k\|^2  + \frac{L_f}{2}\eta^2\mathbb E_k\left[\|e_d\|^2\right]\nonumber\\
    &\quad + \left(\eta \|\nabla f(\mathbf x_k)\|+L_f\eta^2\|\mathbf d_k\| \right)\|\mathbb E_k\left[e_d\right]\|,
\end{align*}
where in the last equality we used $\nabla f(\mathbf x_k)=\mathbf d_k-\lambda_k\nabla g(\mathbf x_k)$. Since $\mathbf d_k$ is the optimal solution of subproblem \eqref{eq:QP}, with the corresponding optimal dual multiplier $\lambda_k$, the complementarity slackness implies that $\lambda_k(\nabla g(\mathbf x_k)^\top\mathbf d_k - \beta \|\nabla g(\mathbf x_k)\|^2)=0$. Hence, we can show:
\begin{align*}
    &\mathbb E_k\left[f(\mathbf{x}_{k+1})-f(\mathbf{x}_k)\right]\\
    &\quad\leq  - \eta \left(1-\frac{L_f}{2}\eta \right)\|\mathbf d_k\|^2 + \eta \lambda_k\beta\|\nabla g(\mathbf x_k)\|^2 + \left(\eta \|\nabla f(\mathbf x_k)\|+L_f\eta^2\|\mathbf d_k\| \right)\|\mathbb E_k\left[e_d\right]\|\nonumber\\
    &\qquad+ \frac{L_f}{2}\eta^2\mathbb E_k\left[\|e_d\|^2\right].
\end{align*}
By dividing both sides by $\eta$ and letting $\Delta f_k = \mathbb E_k\left[f(\mathbf x_k)-f(\mathbf x_{k+1})\right]$, and rearranging the inequality, we have
\begin{align*}
    &\left(1-\frac{L_f}{2}\eta \right)\|\mathbf d_k\|^2 \nonumber\\
    &\quad\leq \frac{\Delta f_k}{\eta} +  \lambda_k\beta\|\nabla g(\mathbf x_k)\|^2 + \left( \|\nabla f(\mathbf x_k)\|+L_f\eta\|\mathbf d_k\| \right)\|\mathbb E_k\left[e_d\right]\| + \frac{L_f}{2}\eta\mathbb E_k\left[\|e_d\|^2\right].
\end{align*}
Using Young's inequality and rearranging the terms, we obtain
\begin{align*}
    &\left(1-L_f \eta \right)\norm*{\mathbf d_k}^2 \nonumber\\
    &\quad\leq \frac{\Delta f_k}{\eta} + \lambda_k \beta \norm*{\nabla g(\mathbf x_k)}^2 + \norm*{\nabla f(\mathbf x_k)} \norm*{\mathbb E_k\left[e_d \right]} + \frac{L_f}{2}\eta \left(\norm*{\mathbb E_k\left[e_d \right]}^2 + \mathbb E_k\left[\norm*{e_d}^2\right] \right).
\end{align*}
Moreover, from Assumption \ref{assump:gradf-g-lip}, $g$ has an $L_g$-Lipschitz continuous gradient, hence,
\begin{align*}
    g(\mathbf{x}_{k+1})-g(\mathbf{x}_k) \leq -\eta \nabla g(\mathbf{x}_k)^\top \tilde{\mathbf{d}}_k + \frac{L_g}{2}\eta^2\|\tilde{\mathbf d}_k\|^2.
\end{align*}
By adding and subtracting $\mathbf d_k$, we have:
\begin{align*}
    &g(\mathbf{x}_{k+1})-g(\mathbf{x}_k) \nonumber\\
    &\quad\leq -\eta \nabla g(\mathbf x_k)^\top \left(\tilde{\mathbf d}_k - \mathbf d_k + \mathbf d_k \right)+ \frac{L_g}{2}\eta^2\|\tilde{\mathbf d}_k-\mathbf d_k + \mathbf d_k\|^2\\
    &\quad\leq -\eta \beta\|\nabla g(\mathbf x_k)\|^2  + \frac{L_g}{2}\eta^2 \left(\|\tilde{\mathbf d}_k - \mathbf d_k\|^2 + \|\mathbf d_k\|^2 + 2\left(\tilde{\mathbf d}_k - \mathbf d_k \right)^\top \mathbf d_k \right)\nonumber\\
    &\qquad -\eta \nabla g(\mathbf x_k)^\top \left(\tilde{\mathbf d}_k - \mathbf d_k \right),
\end{align*}
where in the last inequality, we use the fact that $-\eta\nabla g(\mathbf x_k)^\top \mathbf d_k \leq -\eta\beta \|\nabla g(\mathbf x_k)\|^2$.\\

Taking expectation given $\mathbf x_k$ from both sides, using Cauchy–Schwarz inequality, and $e_d \triangleq \tilde{\mathbf d}_k-\mathbf d_k$, we obtain
\begin{align*}
    &\mathbb E_k\left[g(\mathbf{x}_{k+1})-g(\mathbf{x}_k)\right]\nonumber\\
    &\quad\leq -\eta \beta\|\nabla g(\mathbf x_k)\|^2 + \frac{L_g}{2}\eta^2\|\mathbf d_k\|^2 + \eta \|\nabla g(\mathbf x_k)\| \|\mathbb E_k\left[e_d \right] \| + L_g\eta^2 \|\mathbf d_k\| \|\mathbb E_k\left[e_d\right]\| \nonumber\\
    &\qquad+ \frac{L_g}{2}\eta^2\mathbb E_k\left[\|e_d\|^2\right].
\end{align*}
By dividing both sides by $\eta$ and letting $\Delta g_k = \mathbb E_k\left[g(\mathbf x_k)-g(\mathbf x_{k+1})\right]$, and rearranging the inequality, we have
\begin{align*}
    \beta\|\nabla g(\mathbf x_k)\|^2 \leq \frac{\Delta g_k}{\eta} + \frac{L_g}{2}\eta\|\mathbf d_k\|^2 + \left( \|\nabla g(\mathbf x_k)\| + L_g\eta \|\mathbf d_k\| \right) \|\mathbb E_k\left[e_d\right]\| + \frac{L_g}{2}\eta\mathbb E_k\left[\|e_d\|^2\right].
\end{align*}
Using Young's inequality and rearranging the terms, we obtain
\begin{align*}
    \beta\norm*{\nabla g(\mathbf x_k)}^2 \leq \frac{\Delta g_k}{\eta} +L_g \eta \norm*{\mathbf d_k}^2 + \norm*{\nabla g(\mathbf x_k)} \norm*{\mathbb E_k \left[e_d \right]} + \frac{L_g \eta}{2}\norm*{\mathbb E_k\left[e_d \right]}^2 + \frac{L_g}{2}\eta \mathbb E_k\left[\norm*{e_d}^2 \right].
\end{align*}
\begin{align*}
    \frac{\beta}{2}\norm*{\nabla g(\mathbf x_k)}^2 \leq \frac{\Delta g_k}{\eta} +L_g \eta \norm*{\mathbf d_k}^2 + \left(\frac{1}{2\beta} + \frac{L_g \eta}{2}\right)\norm*{\mathbb E_k\left[e_d \right]}^2 + \frac{L_g}{2}\eta \mathbb E_k\left[\norm*{e_d}^2 \right],
\end{align*}
which multiplying by two leads to the desired result.
\end{proof}

\begin{lemma}\label{lem:imp-d2}
    Under the update scheme given in~\eqref{eq:x-update2}, suppose that Assumption~\ref{assum:bounded-stoch-g} is satisfied and $\eta \leq \frac{1}{4(L_f +2\beta L_g)}$ and $\beta>0$. Then,

\begin{align*}
    &\norm*{\mathbf d_k}^2 \leq \frac{4(\Delta f_k + 2\beta \Delta g_k)}{\eta} + 4G_f \norm*{\mathbb E_k \left[e_d \right]} + \left(4+4L_g \eta \beta + 2L_f \eta\right)\norm*{\mathbb E_k \left[ e_d\right]}^2 \nonumber\\
    &\quad  + \left(4L_g \eta \beta + 2L_f \eta\right) \mathbb E_k \left[\norm*{e_d}^2 \right] + 4G_f^2 \beta L_g \eta +\frac{2\Delta g_k}{L_g\eta^2}\nonumber\\
    &\quad + 4G_f \sqrt{\left(1+L_g\eta\beta \right)\norm*{\mathbb E_k\left[e_d \right]}^2 + L_g\eta\beta \mathbb E_k\left[\norm*{e_d}^2 \right]}.
\end{align*}
\end{lemma}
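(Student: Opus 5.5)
The plan is to combine the two inequalities of Lemma~\ref{lem:deltaf-g1} with weight $2\beta$ on the $g$-inequality, so that the troublesome term $\lambda_k\beta\|\nabla g(\mathbf x_k)\|^2$ in the $f$-inequality can be absorbed. Concretely, I add \eqref{eq: f-bound1} to $2\beta$ times the $g$-smoothness inequality. The $g$-inequality I use is the version \emph{before} the final Young step, i.e.
\[
\beta\|\nabla g(\mathbf x_k)\|^2 \le \tfrac{\Delta g_k}{\eta} + L_g\eta\|\mathbf d_k\|^2 + \|\nabla g(\mathbf x_k)\|\,\|\mathbb E_k[e_d]\| + \tfrac{L_g\eta}{2}\big(\|\mathbb E_k[e_d]\|^2+\mathbb E_k[\|e_d\|^2]\big).
\]
This combination yields the term $4(\Delta f_k+2\beta\Delta g_k)/\eta$ after multiplying by $4$. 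The step-size condition $\eta\le \frac{1}{4(L_f+2\beta L_g)}$ makes the coefficient of $\|\mathbf d_k\|^2$ on the left at least $1-L_f\eta-2\beta L_g\eta\ge 3/4$, which is enough to leave a clean factor in front of $\|\mathbf d_k\|^2$.

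The main obstacle is the term $\lambda_k\beta\|\nabla g(\mathbf x_k)\|^2$. The multiplier $\lambda_k$ may blow up like $G_f/\|\nabla g(\mathbf x_k)\|$ (Lemma~\ref{lem:lambda-ineq}), so I cannot bound it uniformly. Instead I use Lemma~\ref{lem:lambda-ineq} to write
\[
\lambda_k\beta\|\nabla g(\mathbf x_k)\|^2\le \beta^2\|\nabla g(\mathbf x_k)\|^2+\beta G_f\|\nabla g(\mathbf x_k)\|.
\]
The quadratic piece is dominated by the $\beta\|\nabla g\|^2$ coming from the weighted $g$-inequality, after adjusting constants or using $\beta$ small relative to the weight. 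The linear piece $\beta G_f\|\nabla g(\mathbf x_k)\|$ is the real difficulty. I plan to control $\|\nabla g(\mathbf x_k)\|$ by extracting a square root from the $g$-inequality itself. Dividing that inequality by $\beta$ expresses $\|\nabla g\|^2$ through $\Delta g_k/(\eta\beta)$, $(L_g\eta/\beta)\|\mathbf d_k\|^2$, and error terms. I then apply Young's inequality in the form
\[
\beta G_f\|\nabla g\|\le \beta^2 L_g\eta G_f^2 + \frac{\|\nabla g\|^2}{4L_g\eta}.
\]
Multiplied by $4$, this produces the constant $4G_f^2\beta L_g\eta$. The $\|\nabla g\|^2/(L_g\eta)$ part is bounded through the $g$-descent, which is how the term $2\Delta g_k/(L_g\eta^2)$ arises. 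The leftover error contributions, namely $(1+L_g\eta\beta)\|\mathbb E_k[e_d]\|^2+L_g\eta\beta\,\mathbb E_k[\|e_d\|^2]$, enter under a square root. Multiplied by $G_f$, they give the final term $4G_f\sqrt{\cdot}$.

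Remaining cross terms are handled routinely. The term $\|\nabla f\|\,\|\mathbb E_k[e_d]\|\le G_f\|\mathbb E_k[e_d]\|$ produces $4G_f\|\mathbb E_k[e_d]\|$. The term $2\beta\|\nabla g\|\,\|\mathbb E_k[e_d]\|$ is split by Young into $\beta\|\nabla g\|^2$ (absorbed) plus $\|\mathbb E_k[e_d]\|^2$, accounting for the constant $4$ in $(4+4L_g\eta\beta+2L_f\eta)$. The $L_f\eta/2$ and $\beta L_g\eta$ error coefficients, multiplied by $4$, supply the rest. Finally I will drop the nonnegative $\|\nabla g\|^2$ left on the left-hand side, and dividing by the coefficient $\ge 1/4$ of $\|\mathbf d_k\|^2$ (i.e.\ multiplying by $4$) gives the claim. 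The bookkeeping of exactly which Young split lands each constant is where I expect to need adjustment, but the structure above should produce the stated form.
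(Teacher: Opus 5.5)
There is a genuine gap, and it sits in the one step that matters: the linear term $\beta G_f\|\nabla g(\mathbf x_k)\|$.

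\textbf{What works.} Your first half is correct and is the paper's route in additive form. You add $2\beta$ times the pre-Young $g$-inequality to \eqref{eq: f-bound1} and bound $\lambda_k\beta\|\nabla g(\mathbf x_k)\|^2$ via Lemma~\ref{lem:lambda-ineq}. You then split $2\beta\|\nabla g(\mathbf x_k)\|\,\|\mathbb E_k[e_d]\|\le\beta^2\|\nabla g(\mathbf x_k)\|^2+\|\mathbb E_k[e_d]\|^2$; it must be $\beta^2$, not $\beta$, for the absorption to work when $\beta<1$. These steps use up all of $2\beta^2\|\nabla g(\mathbf x_k)\|^2$. You land exactly on the paper's \eqref{eq:d-boundz1}, except with $\beta G_f\|\nabla g(\mathbf x_k)\|$ in place of the square-root term. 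So no $\|\nabla g\|^2$ is left to drop, and the linear term needs a second, separate use of the $g$-inequality.

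\textbf{Where it fails.} Your split $\beta G_f\|\nabla g\|\le\beta^2L_g\eta G_f^2+\|\nabla g\|^2/(4L_g\eta)$, followed by the $g$-inequality divided by $\beta$, puts $\|\mathbf d_k\|^2/(4\beta)$ and $\Delta g_k/(4\beta L_g\eta^2)$ on the right. The first cannot be absorbed for small $\beta$; for $\beta\le\tfrac14$ its coefficient exceeds $1$, and the lemma is later used with $\beta=K^{-1/4}$. The constant also comes out as $4G_f^2\beta^2L_g\eta$, not $4G_f^2\beta L_g\eta$.

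Rebalancing to $\beta G_f\|\nabla g\|\le G_f^2\beta L_g\eta+\beta\|\nabla g\|^2/(4L_g\eta)$ and using \eqref{eq:g-bound1} fixes those terms: you get $\tfrac12\|\mathbf d_k\|^2$, $4G_f^2\beta L_g\eta$ and $2\Delta g_k/(L_g\eta^2)$. But the errors then appear additively, as $(\tfrac{1}{L_g\eta\beta}+1)\|\mathbb E_k[e_d]\|^2+\mathbb E_k[\|e_d\|^2]$ after the factor $4$. No Young split of the whole product can put them under a square root. So your claim that they ``enter under a square root'' has no mechanism behind it, and you would be proving a different inequality.

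\textbf{The missing step.} Take the root \emph{before} applying Young:
\begin{itemize}
\item Write $\beta G_f\|\nabla g(\mathbf x_k)\|=G_f\sqrt{\beta\cdot\beta\|\nabla g(\mathbf x_k)\|^2}$.
\item Insert $\beta$ times the post-Young bound \eqref{eq:g-bound1} under the root. This is where $1+L_g\eta\beta=\beta(\tfrac1\beta+L_g\eta)$ comes from; your pre-Young version would leave $\|\nabla g\|\,\|\mathbb E_k[e_d]\|$ inside.
\item Split with $\sqrt{u+v}\le\sqrt u+\sqrt v$, keeping the error part under the root.
\item Apply Young only to $\sqrt2\,G_f\sqrt{\beta\Delta g_k/\eta+\beta L_g\eta\|\mathbf d_k\|^2}\le\tfrac12\|\mathbf d_k\|^2+G_f^2\beta L_g\eta+\tfrac{\Delta g_k}{2L_g\eta^2}$.
\end{itemize}
Since $\eta\le\frac{1}{4(L_f+2\beta L_g)}$, the left coefficient is at least $\tfrac34$, and $\tfrac34-\tfrac12=\tfrac14$. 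Multiplying by $4$ gives the stated bound.
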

\begin{proof}


Combining \eqref{eq:lambda1} with \eqref{eq: f-bound1} and using Assumption \ref{assum:bounded-stoch-g}, we have

\begin{align*}
    &\left(1-L_f \eta \right)\norm*{\mathbf d_k}^2 \nonumber\\
    &\quad\leq \frac{\Delta f_k}{\eta} +  \beta^2\|\nabla g(\mathbf x_k)\|^2 + \beta G_f\|\nabla g(\mathbf x_k)\| + G_f \|\mathbb E_k[e_d]\| + \frac{L_f}{2}\eta \left(\norm*{\mathbb E_k\left[e_d \right]}^2 + \mathbb E_k\left[\norm*{e_d}^2 \right] \right).
\end{align*}
Substituting \eqref{eq:g-bound1}, we have
\begin{align*}
     &\left(1-L_f \eta \right)\norm*{\mathbf d_k}^2 \nonumber\\
    &\quad\leq \frac{\Delta f_k}{\eta} +  \Bigg(2\beta\frac{\Delta g_k}{\eta} + 2\beta L_g \eta \norm*{\mathbf d_k}^2 + \left(1+L_g\eta\beta \right)\norm*{\mathbb E_k\left[e_d \right]}^2 + L_g\eta\beta \mathbb E_k\left[\norm*{e_d}^2 \right] \Bigg)\nonumber\\
    &\qquad+ G_f \|\mathbb E_k[e_d]\| + \frac{L_f}{2}\eta \left(\norm*{\mathbb E_k\left[e_d \right]}^2 + \mathbb E_k\left[\norm*{e_d}^2 \right] \right)\nonumber\\
    &\qquad+ G_f \sqrt{2\beta\frac{\Delta g_k}{\eta} + 2\beta L_g \eta \norm*{\mathbf d_k}^2 +\left(1+L_g\eta \beta \right)\norm*{\mathbb E_k\left[e_d \right]}^2 + L_g\eta \beta \mathbb E_k\left[\norm*{e_d}^2 \right]}.
\end{align*}
Rearranging the terms,
\begin{align}\label{eq:d-boundz1}
    &\left(1-(L_f + 2\beta L_g)\eta \right)\norm*{\mathbf d_k}^2\nonumber\\
    & \quad \leq \frac{\Delta f_k + 2\beta \Delta g_k}{\eta}  + \left(1+L_g \eta \beta + \frac{L_f \eta}{2}\right)\norm*{\mathbb E_k \left[ e_d\right]}^2 + \left(L_g \eta \beta + \frac{L_f \eta}{2}\right) \mathbb E_k \left[\norm*{e_d}^2 \right]\nonumber\\
    &\qquad  + G_f \sqrt{2\beta\frac{\Delta g_k}{\eta} + 2\beta L_g \eta \norm*{\mathbf d_k}^2 +\left(1+L_g\eta\beta \right)\norm*{\mathbb E_k\left[e_d \right]}^2 + L_g\eta\beta \mathbb E_k\left[\norm*{e_d}^2 \right]}\nonumber\\
    & \qquad + G_f \norm*{\mathbb E_k \left[e_d \right]}.
\end{align}
The preceding inequality in \eqref{eq:d-boundz1} is implicit in $\|\mathbf d_k\|$, as the same term also appears on the right-hand side within the square root. To overcome this issue, by using the fact that $\sqrt{u+v}\leq\sqrt{u}+\sqrt{v}$, Young's inequality, and Assumption \ref{assum:bounded-stoch-g}, we have
\begin{align*}
    &\left(1-(L_f + 2\beta L_g)\eta \right)\norm*{\mathbf d_k}^2\nonumber\\
    & \quad \leq \frac{\Delta f_k + 2\beta \Delta g_k}{\eta} + G_f \norm*{\mathbb E_k \left[e_d \right]} + \left(1+L_g \eta \beta + \frac{L_f \eta}{2}\right)\norm*{\mathbb E_k \left[ e_d\right]}^2 \nonumber\\
    &\qquad  + G_f \sqrt{\left(1+L_g\eta \beta \right)\norm*{\mathbb E_k\left[e_d \right]}^2 + L_g\eta \beta \mathbb E_k\left[\norm*{e_d}^2 \right]} \nonumber\\
    &\qquad + \left(L_g \eta \beta + \frac{L_f \eta}{2}\right) \mathbb E_k \left[\norm*{e_d}^2 \right]+ \sqrt{2}G_f\sqrt{\beta\frac{\Delta g_k}{\eta}+\beta L_g \eta \norm*{\mathbf d_k}^2}\nonumber\\
    & \quad \leq \frac{\Delta f_k + 2\beta \Delta g_k}{\eta} + G_f\norm*{\mathbb E_k \left[e_d \right]} + \left(1+L_g \eta \beta + \frac{L_f \eta}{2}\right)\norm*{\mathbb E_k \left[ e_d\right]}^2 + \frac{1}{2}\norm*{\mathbf d_k}^2\nonumber\\
    &\qquad  + G_f \sqrt{\left(1+L_g\eta\beta \right)\norm*{\mathbb E_k\left[e_d \right]}^2 + L_g\eta\beta \mathbb E_k\left[\norm*{e_d}^2 \right]} + G_f^2 \beta L_g \eta \nonumber\\
    &\qquad + \left(L_g \eta \beta + \frac{L_f \eta}{2}\right) \mathbb E_k \left[\norm*{e_d}^2 \right] +\frac{\Delta g_k}{2L_g\eta^2}.
\end{align*}
Rearranging the term, considering $\eta \leq \frac{1}{4(L_f+2\beta L_g)}$, the left-side of the inequality can be lower-bounded by $1/4\norm*{\mathbf d_k}^2$, and by multiplying both sides by $4$, we can show
\begin{align}\label{eq:d-k-f1}
   &\norm*{\mathbf d_k}^2 \leq \frac{4(\Delta f_k + 2\beta \Delta g_k)}{\eta} + 4G_f \norm*{\mathbb E_k \left[e_d \right]} + \left(4+4L_g \eta \beta + 2L_f \eta\right)\norm*{\mathbb E_k \left[ e_d\right]}^2 \nonumber\\
    &\quad  + \left(4L_g \eta \beta + 2L_f \eta\right) \mathbb E_k \left[\norm*{e_d}^2 \right] + 4G_f^2 \beta L_g \eta +\frac{2\Delta g_k}{L_g\eta^2}\nonumber\\
    &\quad + 4G_f \sqrt{\left(1+L_g\eta\beta \right)\norm*{\mathbb E_k\left[e_d \right]}^2 + L_g\eta\beta \mathbb E_k\left[\norm*{e_d}^2 \right]}.
\end{align}

\end{proof}

\begin{lemma}\label{lem:Lip-Phi}
Let $\Phi_{\gamma}:\mathbb R^n\times\mathbb R^n\to\mathbb R^n$ be defined as $\Phi_\gamma(u,v)\triangleq \left[\beta-\frac{u^\top v - c}{\|u\|^2+\gamma}\right]_+u$, for some $\gamma>0,\beta,c\geq 0$. Then, for any $u,v,w,z\in\mathbb R^n$, $\|\Phi_{\gamma}(u,v)-\Phi_{\gamma}(w,z)\|\leq \left(\beta+\frac{4c}{\gamma}+\frac{2\|z\|}{\sqrt{\gamma}}\right)\|u-w\| + \|v-z\|$.
\end{lemma}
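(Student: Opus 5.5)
The plan is to separate the two arguments with the triangle inequality,
\[
\|\Phi_\gamma(u,v)-\Phi_\gamma(w,z)\|\le \|\Phi_\gamma(u,v)-\Phi_\gamma(u,z)\|+\|\Phi_\gamma(u,z)-\Phi_\gamma(w,z)\|,
\]
so that the first term gives the coefficient $1$ on $\|v-z\|$ and the second gives the coefficient $\beta+\frac{4c}{\gamma}+\frac{2\|z\|}{\sqrt\gamma}$ on $\|u-w\|$. Note that only $z$, the second argument of the second point, enters that coefficient, which is why the split varies $v$ first at fixed $u$, then $u$ at fixed $z$.

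\textbf{Step 1: the $v$-variation.} Fix $u$. By nonexpansiveness of $[\cdot]_+$, as in the proof of Lemma~\ref{lem:lambda_eps_bound}, the scalar multipliers differ by at most $\frac{|u^\top(v-z)|}{\|u\|^2+\gamma}$. Multiplying by $\|u\|$ and applying Cauchy--Schwarz gives
\[
\|\Phi_\gamma(u,v)-\Phi_\gamma(u,z)\|\le \frac{\|u\|^2}{\|u\|^2+\gamma}\,\|v-z\|\le\|v-z\|.
\]

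\textbf{Step 2: the $u$-variation at fixed $z$.} Write $D(u)=\|u\|^2+\gamma$, $h(u)=\frac{u^\top z-c}{D(u)}$, and $\psi(u)=[\beta-h(u)]_+u$.

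\emph{Reduction to a derivative bound.} Along the segment $u(t)=w+t(u-w)$, $t\in[0,1]$, the map $t\mapsto\psi(u(t))$ is continuous and piecewise smooth. On each piece it equals either $0$ or $\beta u-h(u)u$, and the kink set is handled by absolute continuity, since $[\cdot]_+$ is Lipschitz. Integrating the a.e.\ derivative therefore gives $\|\psi(u)-\psi(w)\|\le M\|u-w\|$, where $M$ is any uniform bound on the operator norm of the Jacobian $\beta I-h(u)I-u\nabla h(u)^\top$ over $u$. This Jacobian bound is the main technical step.

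\emph{Bounding the Jacobian.} Compute
\[
\nabla h(u)=\frac{z}{D}-\frac{2(u^\top z-c)\,u}{D^2}.
\]
Then bound each piece, using $\frac{\|u\|}{D}\le\frac{1}{2\sqrt\gamma}$ (AM--GM), $\frac{\|u\|^2}{D}\le 1$, and $\frac{1}{D}\le\frac1\gamma$:
\begin{itemize}
\item $|h(u)|\le \frac{\|z\|}{2\sqrt\gamma}+\frac{c}{\gamma}$.
\item $\|u\|\,\|z\|/D\le \frac{\|z\|}{2\sqrt\gamma}$.
\item $2|u^\top z|\,\|u\|^2/D^2\le 2\|z\|\,\sup_{t\ge0}\frac{t^3}{(t^2+\gamma)^2}=\frac{3\sqrt3}{8}\frac{\|z\|}{\sqrt\gamma}$, where the supremum is attained at $t=\sqrt{3\gamma}$.
\item $2c\|u\|^2/D^2\le \frac{2c}{\gamma}$.
\end{itemize}
Summing, the total coefficient of $\|z\|/\sqrt\gamma$ is $\frac12+\frac12+\frac{3\sqrt3}{8}<2$, and the coefficient of $c/\gamma$ is at most $3\le4$. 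Together with the $\beta I$ term this yields $M\le\beta+\frac{4c}{\gamma}+\frac{2\|z\|}{\sqrt\gamma}$.

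\textbf{Conclusion.} Combining Steps 1 and 2 through the triangle inequality above gives the claimed bound $\|\Phi_\gamma(u,v)-\Phi_\gamma(w,z)\|\le\bigl(\beta+\frac{4c}{\gamma}+\frac{2\|z\|}{\sqrt\gamma}\bigr)\|u-w\|+\|v-z\|$.
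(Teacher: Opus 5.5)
Your proposal is correct and matches the paper's proof essentially step for step: the same triangle-inequality split (vary $v$ at fixed $u$, then $u$ at fixed $z$), the same nonexpansiveness and Cauchy--Schwarz bound for the first term, and the same segment-integration argument for the second, bounding $[\beta-h(u)]_+ + \|u\|\,\|\nabla h(u)\|$ uniformly. The only difference is cosmetic: you bound the cubic term by its exact supremum $\frac{3\sqrt3}{8}\frac{\|z\|}{\sqrt\gamma}$ instead of the paper's cruder $\frac{\|z\|}{\sqrt\gamma}$, which simply leaves slack under the same stated constant.
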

\begin{proof}
Using the triangle inequality, we have that 
\begin{align}\label{eq:Phi-bound}
\|\Phi_\gamma(u,v)-\Phi_\gamma(w,z)\|
&\leq \|\Phi_\gamma(u,v)-\Phi_\gamma(u,z)\| + \|\Phi_\gamma(u,z)-\Phi_\gamma(w,z)\|\nonumber\\
&= \|\Phi_\gamma(u,v)-\Phi_\gamma(u,z)\| 
\nonumber\\
&\quad + \left\|\left[\beta-\frac{u^\top z - c}{\|u\|^2+\gamma}\right]_+u - \left[\beta-\frac{w^\top z - c}{\|w\|^2+\gamma}\right]_+w\right\|. 
\end{align}

For the first term on the right-hand side of \eqref{eq:Phi-bound} again using Cauchy-Schwarz inequality and non-expansivity of $[\cdot]_+$ we have
\begin{align}\label{eq:z-lip-bound}
\|\Phi_\gamma(u,v)-\Phi_\gamma(u,z)\|
&=
\left\|
\left[
\beta-\frac{u^\top v - c}{\|u\|^2+\gamma}
\right]_+u
-
\left[
\beta-\frac{u^\top z - c}{\|u\|^2+\gamma}
\right]_+u
\right\|  \nonumber\\
&\le
\left|
\frac{u^\top(v-z)}{\|u\|^2+\gamma}
\right|\|u\| \nonumber\\
&\le
\frac{\|u\|^2}{\|u\|^2+\gamma}\|v-z\| \nonumber\\
&\le
\|v-z\|.
\end{align}

For the second term on the right-hand side of \eqref{eq:Phi-bound}, let us define $b_z(x):=\frac{x^\top z-c}{\|x\|^2+\gamma}$, $a_z(x):=\beta-b_z(x)$, $F_z(x):=[a_z(x)]_+x$, then $\Phi_\gamma(x,z)=F_z(x)$. 
Let $q(t)=w+t(u-w)$, $t\in[0,1]$. Note that the map $t\mapsto F_z(q(t))$ is absolutely
continuous and its derivative for every $t$ can be calculated as,
\[
\frac{d}{dt}F_z(q(t))
=
\frac{d}{dt}[a_z(q(t))]_+\,q(t)
+
[a_z(q(t))]_+(u-w).
\]
Using the chain rule we obtain
\[
\left\|
\frac{d}{dt}F_z(q(t))
\right\|
\leq
\left(
[a_z(q(t))]_+
+
\|q(t)\|\,\|\nabla a_z(q(t))\|
\right)\|u-w\|.
\]
Therefore it suffices to bound the factor in parentheses uniformly in
$x\in\mathbb R^n$.

First,
\begin{align*}
[a_z(x)]_+
=
\left[
\beta-\frac{x^\top z-c}{\|x\|^2+\gamma}
\right]_+                                      
\leq
\beta+
\left|
\frac{x^\top z-c}{\|x\|^2+\gamma}
\right|                                      
&\leq
\beta+
\frac{\|x\|\|z\|}{\|x\|^2+\gamma}
+
\frac{c}{\|x\|^2+\gamma}     \\                 
&\leq
\beta+\frac{\|z\|}{2\sqrt{\gamma}}+\frac{c}{\gamma},
\end{align*}
where we used $\frac{r}{r^2+\gamma}\leq \frac{1}{2\sqrt{\gamma}}$ for any $r\geq 0$.

Next, since $\nabla b_z(x)
=
\frac{(\|x\|^2+\gamma)z-2(x^\top z-c)x}
     {(\|x\|^2+\gamma)^2}$,
and $\nabla a_z(x)=-\nabla b_z(x)$, we have
\begin{align*}
\|x\|\,\|\nabla a_z(x)\|
&=
\|x\|\,\|\nabla b_z(x)\|                                      \\
&\leq
\frac{\|x\|\|z\|}{\|x\|^2+\gamma}
+
\frac{2\|x\|^2|x^\top z-c|}{(\|x\|^2+\gamma)^2}                \\
&\leq
\frac{\|z\|}{2\sqrt{\gamma}}
+
\frac{2\|x\|^3\|z\|}{(\|x\|^2+\gamma)^2}
+
\frac{2c\|x\|^2}{(\|x\|^2+\gamma)^2}.
\end{align*}
Therefore, using the facts that for any $r\geq 0$, 
$\frac{2r^3}{(r^2+\gamma)^2}
=
\frac{2r^2}{r^2+\gamma}\frac{r}{r^2+\gamma}
\leq
\frac{1}{\sqrt{\gamma}}$, $
\frac{2r^2}{(r^2+\gamma)^2}
\leq
\frac{2}{\gamma}$, 
we obtain
\[
\|x\|\,\|\nabla a_z(x)\|
\leq
\frac{3\|z\|}{2\sqrt{\gamma}}+\frac{2c}{\gamma}.
\]
Combining the two estimates gives
\[
[a_z(x)]_+ + \|x\|\,\|\nabla a_z(x)\|
\leq
\beta+\frac{2\|z\|}{\sqrt{\gamma}}+\frac{3c}{\gamma}
\leq
\beta+\frac{2\|z\|}{\sqrt{\gamma}}+\frac{4c}{\gamma}.
\]
Thus,
\[
\left\|
\frac{d}{dt}F_z(q(t))
\right\|
\leq
\left(
\beta+\frac{4c}{\gamma}+\frac{2\|z\|}{\sqrt{\gamma}}
\right)\|u-w\|.
\]
Integrating over $t\in[0,1]$ yields
\begin{align}\label{eq:Phi-uw}
\|\Phi_\gamma(u,z)-\Phi_\gamma(w,z)\|
\leq
\left(
\beta+\frac{4c}{\gamma}+\frac{2\|z\|}{\sqrt{\gamma}}
\right)\|u-w\|.
\end{align}

Substituting the bounds in \eqref{eq:z-lip-bound} and \eqref{eq:Phi-uw} within \eqref{eq:Phi-bound} proves the claim.

\end{proof}
\section{Proof of Lemma \ref{lem:bias-ed-gradg}-part(a)}\label{Appndx5}
\begin{proof}
Recall that $e_d = \tilde{\mathbf d}_k-\mathbf d_k$, hence, we have that 
\begin{align*}
e_d  &=
\left(\nabla \tilde f(\mathbf x_k,\xi_f^k)-\nabla f(\mathbf x_k)\right)
+
\left(
\tilde{\lambda}_{\gamma,k}\nabla \tilde g(\mathbf x_k,\xi_{g}^k)
-
\lambda_k\nabla g(\mathbf x_k)
\right)\\
&=\left(\nabla \tilde f(\mathbf x_k,\xi_f^k)-\nabla f(\mathbf x_k)\right)
+
\left(
\tilde{\lambda}_{\gamma,k}\nabla \tilde g(\mathbf x_k,\xi_{g}^k)
-
\lambda_{\gamma,k}\nabla g(\mathbf x_k)
\right)
+
\left(\lambda_{\gamma,k}-\lambda_k\right)\nabla g(\mathbf x_k)\\
&=\left(\nabla \tilde f(\mathbf x_k,\xi_f^k)-\nabla f(\mathbf x_k)\right)
+\left(\Phi_{\gamma_k}(\nabla \tilde g_k,\nabla \tilde f_k) - \Phi_{\gamma_k}(\nabla g_k,\nabla f_k)\right)+\left(\lambda_{\gamma,k}-\lambda_k\right)\nabla g(\mathbf x_k),
\end{align*}
where $\Phi_\gamma(u,v)\triangleq \left[\beta-\frac{u^\top v}{\|u\|^2+\gamma}\right]_+u$. 
Taking conditional expectation and using the unbiasedness of $\nabla \tilde f(\mathbf x_k,\xi_f)$, we get
\begin{align}
\norm*{\mathbb E_k\left[e_d\right]}
&=\norm*{\mathbb E_k\left[\left(\Phi_{\gamma_k}(\nabla \tilde g_k,\nabla \tilde f_k) - \Phi_{\gamma_k}(\nabla g_k,\nabla f_k)\right)+\left(\lambda_{\gamma,k}-\lambda_k\right)\nabla g(\mathbf x_k)\right]}\nonumber\\
&\leq 
\mathbb E_k\norm*{\Phi_{\gamma_k}(\nabla \tilde g_k,\nabla \tilde f_k) - \Phi_{\gamma_k}(\nabla g_k,\nabla f_k)}
+
\left|\lambda_{\gamma,k}-\lambda_k\right|
\,
\|\nabla g(\mathbf x_k)\|\nonumber\\
&\leq \left(\beta+\frac{2\|\nabla f(\mathbf x_k)\|}{\sqrt{\gamma_k}}\right)\mathbb E_k\left[\|\nabla \tilde g(\mathbf x_k,\xi_g^k)-\nabla g(\mathbf x_k)\|\right]+\mathbb E_k\left[\|\nabla \tilde f(\mathbf x_k,\xi_f^k)-\nabla f(\mathbf x_k)\|\right]
\nonumber\\
&\quad +
\left|\lambda_{\gamma,k}-\lambda_k\right|
\,
\|\nabla g(\mathbf x_k)\|\nonumber\\
&\leq \left(\beta+\frac{2G_f}{\sqrt{\gamma_k}}\right)\frac{\nu_g}{\sqrt{N_g}} + \frac{\nu_f}{\sqrt{N_f}}+\left|\lambda_{\gamma,k}-\lambda_k\right|
\,
\|\nabla g(\mathbf x_k)\|.
\label{eq:ed-bias-1}
\end{align}
where in the penultimate inequality we applied Lemma \ref{lem:Lip-Phi} with $c=0$, and the last inequality follows from Assumption \ref{assum:stoch_grad}. 

To upper bound the last term on the right-hand side of \eqref{eq:ed-bias-1}, in Lemma \ref{lem:lambda_eps_bound}, we have
\begin{align}
&\left|\lambda_{\gamma,k}-\lambda_k\right|\le
\frac{
\gamma_k
\|\nabla f(\mathbf x_k)\|
}{
\|\nabla g(\mathbf x_k)\|
\left(\|\nabla g(\mathbf x_k)\|^2+\gamma_k\right)
}.
\label{eq:ed-bias-2}
\end{align}
Combining \eqref{eq:ed-bias-2} with \eqref{eq:ed-bias-1} and noting that $\|\nabla f(\mathbf x_k)\|\leq G_f$ leads to the desired result.

\end{proof}

\section{Proof of Lemma \ref{lem:bias-ed-gradg}-part(b)}\label{Appndx6}
\begin{proof}
We start upper bounding $\mathbb E_k[\|e_d\|^2]$ by using $
\|a+b+c\|^2\le 3\|a\|^2+3\|b\|^2+3\|c\|^2,$ as follows:
\begin{align}
\mathbb E_k\left[\|e_d\|^2\right]
&\le
3\,
\mathbb E_k\left[
\|\nabla \tilde f(\mathbf x_k,\xi_f^k)-\nabla f(\mathbf x_k)\|^2\right]
\nonumber\\
&\quad
+
3\,
\mathbb E_k\left[
\left\|
\tilde{\lambda}_{\gamma,k}\nabla \tilde g(\mathbf x_k,\xi_{g}^k)
-
\lambda_{\gamma,k}\nabla g(\mathbf x_k)
\right\|^2\right]
\nonumber\\
&\quad
+
3\,
\left|\lambda_{\gamma,k}-\lambda_k\right|^2
\|\nabla g(\mathbf x_k)\|^2.
\label{eq:ed-second-1}
\end{align}

The first term on the right-hand side of \eqref{eq:ed-second-1} is bounded by
\begin{align}
\mathbb E_k\left[
\|\nabla \tilde f(\mathbf x_k,\xi_f^k)-\nabla f(\mathbf x_k)\|^2\right]
\le
\frac{\nu_f^2}{N_f}.
\label{eq:ed-second-2}
\end{align}

For the third term on the right-hand side of \eqref{eq:ed-second-1}, by \eqref{eq:ed-bias-2}, we have
\begin{align}
\left|\lambda_{\gamma,k}-\lambda_k\right|^2
\|\nabla g(\mathbf x_k)\|^2
&\le
\frac{
\gamma_k^2
G_f^2
}{
\left(\|\nabla g(\mathbf x_k)\|^2+\gamma_k\right)^2
}.
\label{eq:ed-second-3}
\end{align}

Next, we focus on the second term on the right-hand side of \eqref{eq:ed-second-1}. Applying Lemma \ref{lem:Lip-Phi} with $c=0$ for $\Phi_{\gamma_k}(\cdot,\cdot)$ at $u=\nabla \tilde g(\mathbf x_k,\xi_g^k)$, $w=\nabla g(\mathbf x_k)$, $v=\nabla \tilde f(\mathbf x_k,\xi_f^k)$, and $z=\nabla f(\mathbf x_k)$ we conclude that 
\begin{align}\label{eq:lambda*grad-variance}
&\mathbb E_k\left[
\left\|
\tilde{\lambda}_{\gamma,k}\nabla \tilde g(\mathbf x_k,\xi_{g}^k)
-
\lambda_{\gamma,k}\nabla g(\mathbf x_k)
\right\|^2\right] \nonumber\\
&\leq 2\left(\beta+\frac{2\|\nabla f(\mathbf x_k)\|}{\sqrt{\gamma_k}}\right)^2\mathbb E_k\left[\|\nabla \tilde g(\mathbf x_k,\xi_g^k)-\nabla g(\mathbf x_k)\|^2\right]+2\mathbb E_k\left[\|\nabla \tilde f(\mathbf x_k,\xi_f^k)-\nabla f(\mathbf x_k)\|^2\right]
\nonumber\\
&\leq 2\left(\beta+\frac{2G_f}{\sqrt{\gamma_k}}\right)^2\frac{\nu_g^2}{N_g} +2\frac{\nu_f^2}{N_f}.
\end{align}
Substituting \eqref{eq:ed-second-2}, \eqref{eq:ed-second-3}, and \eqref{eq:lambda*grad-variance} in \eqref{eq:ed-second-1}, lead to the desired result.
\end{proof}


\section{Proof of Proposition \ref{thm:theorem1}}\label{Appndx9}

\begin{proof}

Start from \eqref{eq:exp-ed1-gradg},
\begin{align*}
\norm*{\mathbb E_k\left[e_d\right]}
\leq \frac{\nu_f}{\sqrt{N_f}}+\left(\beta_k+\frac{2G_f}{\sqrt{\gamma_k}}\right)\frac{\nu_g}{\sqrt{N_g}}  + \frac{
\gamma_k G_f}{\|\nabla g(\mathbf x_k)\|^2+\gamma_k}.
\end{align*}
Assume that $\gamma_k$ is of the same order as
$K^{-r}\|\nabla g(\mathbf x_k)\|^2$, namely,
$\gamma_k = K^{-r}\|\nabla g(\mathbf x_k)\|^2$,
where $r>0$, 
and choose batch sizes as follows
\begin{align}\label{eq:param-Ngp}
    N_f
=
K^{a_f}, \quad N_g
=K^{a_g}\|\nabla g(\mathbf x_k)\|^{-2},
\end{align}
where $a_f,a_g, r> 0$. 

Assuming that $\beta_k\leq \frac{2G_f}{\sqrt{\gamma_k}}$, using \eqref{eq:param-Ngp}, we can simplify the bound as follows 
\begin{align}\label{eq:bias-final-proof}
    \|\mathbb E_k[e_d]\|
&\le \nu_f K^{-a_f/2} + 4G_f\nu_g K^{(r-a_g)/2} + G_f \frac{K^{-r}}{1+K^{-r}}.
\end{align}
Moreover, from \eqref{eq:exp-ed2-gradg}, we have that 
\begin{align*}
\mathbb E_k\left[\|e_d\|^2\right]\leq \frac{9\nu_f^2}{N_f} 
+6\left(\beta_k+\frac{2G_f}{\sqrt{\gamma_k}}\right)^2\frac{\nu_g^2}{N_g} 
+ \frac{3\gamma_k^2G_f^2
}{\left(\|\nabla g(\mathbf x_k)\|^2+\gamma_k\right)^2}.
\end{align*}
which, using similar steps, implies that 
\begin{align}
\mathbb E_k\left[\|e_d\|^2\right]\leq 9\nu_f^2 K^{-a_f} + 48G_f^2\nu_g^2 K^{r-a_g} + 3 G_f^2 \left(\frac{K^{-r}}{1+K^{-r}}\right)^2.
\end{align}

Next, consider \eqref{eq:d-k-f1} and \eqref{eq:g-bound1}, using Assumption \ref{assum:bounded-stoch-g},

\begin{align}
&\norm*{\mathbf d_k}^2 \leq \frac{4(\Delta f_k + 2\beta \Delta g_k)}{\eta} + 4G_f \norm*{\mathbb E_k \left[e_d \right]} + \left(4+4L_g \eta \beta + 2L_f \eta\right)\norm*{\mathbb E_k \left[ e_d\right]}^2 \nonumber\\
    &\quad  + \left(4L_g \eta \beta + 2L_f \eta\right) \mathbb E_k \left[\norm*{e_d}^2 \right] + 4G_f^2 \beta L_g \eta +\frac{2\Delta g_k}{L_g\eta^2}\nonumber\\
    &\quad + 4G_f \sqrt{\left(1+L_g\eta\beta \right)\norm*{\mathbb E_k\left[e_d \right]}^2 + L_g\eta\beta \mathbb E_k\left[\norm*{e_d}^2 \right]}.
\label{eq:dk-start-proof}
\end{align}
\begin{align}\label{eq:g-bound1wcg}
     &\beta\norm*{\nabla g(\mathbf x_k)}^2 \leq \frac{2\Delta g_k}{\eta} +2L_g \eta \norm*{\mathbf d_k}^2 + (\frac{1}{\beta}+L_g\eta)\norm*{\mathbb E_k\left[e_d \right]}^2 + L_g\eta \mathbb E_k\left[\norm*{e_d}^2 \right].
\end{align}

Multiplying \eqref{eq:g-bound1wcg} by $1/(4L_g\eta)$, we obtain
\begin{align*}
\frac{\beta}{4L_g\eta}\norm*{\nabla g(\mathbf x_k)}^2
&\le
\frac{\Delta g_k}{2L_g\eta^2}
+
\frac{1}{2}\norm*{\mathbf d_k}^2
+\left(\frac{1}{4L_g\eta\beta} +\frac{1}{4}\right)\norm*{\mathbb E_k\left[e_d \right]}^2
+
\frac14 \mathbb E_k\left[\norm*{e_d}^2 \right].
\end{align*}
Adding this inequality to \eqref{eq:dk-start-proof}, we obtain
\begin{align*}
\frac{1}{2}\|\mathbf d_k\|^2+\frac{\beta}{4L_g\eta}\|\nabla g(\mathbf x_k)\|^2
&\le
\frac{4\Delta f_k+8\beta\Delta g_k}{\eta}
+\frac{5\Delta g_k}{2L_g\eta^2}
+4G_f^2\beta L_g\eta\\
&\quad +\left(4G_f\right)\norm*{\mathbb E_k \left[e_d \right]} \\
&\quad +\Big(4L_g\eta\beta+2L_f\eta+\frac{1}{4L_g\eta \beta}+\frac{17}{4}\Big)\norm*{\mathbb E_k \left[e_d \right]}^2 \\
&\quad +\Big(4L_g\eta\beta+2L_f\eta+\tfrac14\Big)\mathbb E_k\left[\norm*{e_d}^2 \right] \\
&\quad +4G_f \sqrt{\left(1+L_g\eta\beta \right)\norm*{\mathbb E_k\left[e_d \right]}^2 + L_g\eta\beta \mathbb E_k\left[\norm*{e_d}^2 \right]}.
\end{align*}

Define the potential function
$\mathcal G_k \;\triangleq\; \frac{1}{2}\norm*{\mathbf d_k}^2 + \frac{\beta}{4L_g\eta}\norm*{\nabla g(\mathbf x_k)}^2.$ 
Taking the average of the preceding inequality over $k=0,\ldots,K-1$, using $\sum_{k=0}^{K-1}\Delta f_k=f(\mathbf x_0)-f(\mathbf x_K)
\le f(\mathbf x_0)-\inf f = \Delta f$, and $\sum_{k=0}^{K-1}\Delta g_k
= g(\mathbf x_0)-g(\mathbf x_K) \le g(\mathbf x_0)-g^* =
\Delta g$, and taking the total expectation, we arrive at
\begin{align}\label{eq:bound-G-error}
\frac{1}{K}\sum_{k=0}^{K-1}\mathbb E[\mathcal G_k]
&\le
\frac{4\Delta f+8\beta\Delta g}{\eta K}
+\frac{5\Delta g}{2L_g\eta^2 K}
+4G_f^2\beta L_g\eta \nonumber\\
&\quad +
\left(4G_f\right)
\frac{1}{K}\sum_{k=0}^{K-1}\mathbb E\left[\norm*{\mathbb E_k \left[e_d \right]}\right] \nonumber\\
&\quad +
\Big(4L_g\eta\beta+2L_f\eta+\frac{1}{4L_g\eta\beta}+\frac{17}{4}\Big)
\frac{1}{K}\sum_{k=0}^{K-1} \mathbb E\left[\norm*{\mathbb E_k \left[e_d \right]}^2\right] \nonumber \\
&\quad +
\Big(4L_g\eta\beta+2L_f\eta+\tfrac14\Big)
\frac{1}{K}\sum_{k=0}^{K-1}\mathbb E\left[\norm*{e_d}^2 \right] \nonumber \\
&\quad +
\frac{4G_f}{K}
\sum_{k=0}^{K-1}
\sqrt{\left(1+L_g\eta\beta \right)\mathbb E\left[\norm*{\mathbb E_k\left[e_d \right]}\right]^2 + L_g\eta\beta \mathbb E\left[\norm*{e_d}^2 \right]}.
\end{align}

Now, let $\eta=K^{-s}\in (0,1)$, $\beta=K^{-q}\in (0,1)$, and define $\alpha=\min\{a_f/2,(a_g-r)/2,r\}\geq 0$. Replacing the above estimates into the bound for $\frac1K\sum_{k=0}^{K-1}\mathbb E[\mathcal G_k]$, and keeping the largest coefficient of error terms, we obtain
\begin{align*}
&\frac{1}{K}\sum_{k=0}^{K-1}\mathbb E[\mathcal G_k]\\
&\quad =
\mathcal O\Big(K^{s-1}+K^{s-q-1}+K^{2s-1}+K^{-(q+s)}+K^{-\alpha}+K^{s+q-2\alpha}+K^{-2\alpha}+K^{-(q+s+2\alpha)/2}\Big)\\
&\quad =\mathcal O\Big(K^{2s-1}+K^{-(q+s)}+K^{-\alpha}+K^{s+q-2\alpha}\Big)
\end{align*}

Now, considering the parameter choices $s=\frac{1}{2(p+1)}$, $q=\frac{p}{2(p+1)}$, $\alpha=\frac{3p+1}{4(p+1)}$ for any $p\geq 0$, we conclude that $\frac{1}{K}\sum_{k=0}^{K-1}\mathbb E[\mathcal G_k]=\mathcal O(K^{-\mu_1})$ where $\mu_1\triangleq \min\{\frac{p}{p+1},\frac{3p+1}{4(p+1)},\frac{1}{2}\}$. By the definition $\mathcal{G}_{k} = \frac{\norm*{\mathbf d_{k}}^2}{2} + \frac{\beta \norm*{\nabla g(\mathbf x_{k})}^2}{2L_g \eta}$, this implies $\norm*{\mathbf d_{k}}^2 \leq 2\mathcal{G}_{k}$, and $\norm*{\nabla g(\mathbf x_{k})}^2 \leq \frac{2L_g \eta}{\beta}\mathcal{G}_{k}$. Therefore, defining $\mu_2\triangleq \min\{\frac12,\frac{p+3}{4(p+1)},\frac{1}{p+1}\}$, 
\begin{align}\label{eq:d-gradg-bnd-p}
    &\frac{1}{K}\sum_{k=0}^{K-1}\mathbb E\left[\norm*{\mathbf d_{k}}^2\right] = \mathcal{O}\left(K^{-\mu_1} \right),\quad \frac{1}{K}\sum_{k=0}^{K-1}\mathbb E\left[\norm*{\nabla g(\mathbf x_{k})}^2\right] = \mathcal{O}\left(K^{-\mu_2} \right).
\end{align}

Moreover, substituting the chosen parameters into \eqref{eq:param-Ngp}, we have that $a_f=2\alpha$ and $a_g=3\alpha$, hence, the per-iteration oracle costs are $N_f=K^{\frac{3p+1}{2(p+1)}}$ and $N_g=K^{\frac{9p+3}{4(p+1)}}\|\nabla g(\mathbf x_k)\|^{-2}$. Therefore, to achieve $(\epsilon_f,\epsilon_g)$-stationary solution, we need $K= \mathcal O(\max\{\epsilon_f^{-\frac{1}{\mu_1}},\epsilon_g^{-\frac{1}{\mu_2}}\})$ iterations. Let $\epsilon=\min\{\epsilon_f,\epsilon_g\}$ and $\mu\triangleq \min\{\mu_1,\mu_2\}$, this implies that $K= \mathcal O(\epsilon^{-\frac{1}{\mu}})$. Therefore, the required number of sample gradients for the upper-level function can be computed as  $\sum_{k=0}^{K-1}N_f=K^{\frac{5p+3}{2(p+1)}}= \mathcal O(\epsilon^{-\frac{1}{\mu}\frac{5p+3}{2(p+1)}})$, and for the lower-level function we have 
\begin{align}\label{eq:lower-Ng}
\sum_{k=0}^{K-1}N_g^{(k)} = K^{\frac{9p+3}{4(p+1)}}\sum_{k=0}^{K-1}\|\nabla g(\mathbf x_k)\|^{-2}. 
\end{align}

Minimizing the exponents in upper-level sample complexity $\frac{1}{\mu}\frac{5p+3}{2(p+1)}$ 
with respect to $p$, we conclude that the unique optimal minimizer is $p=1$. Therefore, the convergence rate simplifies to  
\begin{align}\label{eq:d-gradg-bnd-1}
    &\frac{1}{K}\sum_{k=0}^{K-1}\mathbb E\left[\norm*{\mathbf d_{k}}^2\right] = \mathcal{O}\left(K^{-\frac{1}{2}} \right),\quad \frac{1}{K}\sum_{k=0}^{K-1}\mathbb E\left[\norm*{\nabla g(\mathbf x_{k})}^2\right] = \mathcal{O}\left(K^{-\frac{1}{2}} \right).
\end{align}
Moreover, the required number of sample gradients for the upper-level function is $\sum_{k=0}^{K-1}N_f=K^{2}= \mathcal O(\epsilon^{-4})$, and for the lower-level function, we obtain $\sum_{k=0}^{K-1}N_g^{(k)}= K^{3/2}\sum_{k=0}^{K-1}\|\nabla g(\mathbf x_k)\|^{-2}$.

\end{proof}

Before, proving the result of Theorem \ref{thm:theorem2}, we state the modified version of Algorithm \ref{alg:alg1} as stated in the statement of Theorem \ref{thm:theorem2}. 

\begin{algorithm}[tbh!]
\caption{Modified SDBPG Method}
\label{alg:alg1-modified}
\begin{algorithmic}[1]
\State \textbf{Input:} step sizes $\{\eta_k\}_{k\ge 0}$, barrier parameters $\{\beta_k\}_{k\ge 0}$, perturbation sequence $\{\gamma_k\}_{k\ge 0}$, thresholds $\theta,\tau\geq 0$, and sample sizes $N_f,N_g$
\State \textbf{Initialization:} $\mathbf{x}_0 \in \mathbb R^n$
\For{$k = 0,1,2,\dots,K-1$}
    \State Draw independent mini-batch samples $\hat\xi_f^k,\hat\xi_{g}^k$ with size $N_f$ and $N_g$, respectively.
    \State Compute $\nabla \hat{f}_k = \frac{1}{N_f}\sum_{i=1}^{N_f} \nabla \tilde f(\mathbf{x}_k, \hat\xi_{f,i}^k)$, $\nabla \hat{g}_k = \frac{1}{N_g}\sum_{i=1}^{N_g} \nabla \tilde g(\mathbf{x}_k, \hat\xi_{g,i}^k)$.
    \State Draw independent mini-batch samples $\xi_f^k,\xi_{g}^k$ with size $N_f$ and $N_g$, respectively. 
    
    \State Compute $\nabla \tilde{f}_k = \frac{1}{N_f}\sum_{i=1}^{N_f} \nabla \tilde f(\mathbf{x}_k, \xi_{f,i}^k)$, $\nabla \tilde{g}_k = \frac{1}{N_g}\sum_{i=1}^{N_g} \nabla \tilde g(\mathbf{x}_k, \xi_{g,i}^k)$. 

    \If{$ \norm{\nabla \hat g_k}^2\le \tau/2$ and $\nabla \hat g_k^\top\nabla \hat f_k< -\theta\norm{\nabla \hat g_k}^2$}
     \State $\tilde\lambda_{\gamma,k} \gets 0$
    
    \Else
    \State $\tilde\lambda_{\gamma,k} \gets \max\big\{\beta_k - \tfrac{\nabla \tilde g_k^\top \nabla\tilde f_k}{\|\nabla \tilde g_k\|^2+\gamma_k}, 0\big\}$
    \EndIf
    \State $\tilde{\mathbf d}_k
    \gets
    \nabla \tilde f_k
    +
    \tilde{\lambda}_{\gamma,k}
    \nabla \tilde g_k$
    \State 
    $\mathbf{x}_{k+1}
    \gets
    \mathbf{x}_k
    -
    \eta_k \tilde{\mathbf d}_k$
\EndFor
\end{algorithmic}
\end{algorithm}

\section{Proof of Theorem \ref{thm:theorem2}}\label{Appndx10}
\begin{lemma}[Perturbation error outside the bad region]
\label{lem:lambda-bad-region-bound}
Let $\lambda_k$ and $\lambda_{\gamma,k}$ be defined by
\eqref{eq:tild-lambda1} and \eqref{eq:lambda-eps1}, respectively, and use
the convention that
$|\lambda_{\gamma,k}-\lambda_k|\|\nabla g(\mathbf x_k)\|=0$ whenever
$\nabla g(\mathbf x_k)=0$. if $\mathbf x_k\notin \mathcal R_k(\tau,\theta)$, then
\[
|\lambda_{\gamma,k}-\lambda_k|\,\|\nabla g(\mathbf x_k)\|
\le
\frac{\gamma_kG_f}{\tau+\gamma_k}
+
(\beta_k+\theta)\sqrt{\tau}.
\]
\end{lemma}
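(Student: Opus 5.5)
The plan is a case split on the size of $\|\nabla g(\mathbf x_k)\|^2$ at the threshold $\tau$. Write $s\triangleq\|\nabla g(\mathbf x_k)\|^2$ and $a\triangleq\nabla g(\mathbf x_k)^\top\nabla f(\mathbf x_k)$. The claimed bound is a sum of two nonnegative terms, so in each case it suffices to show that the left-hand side is dominated by one of them. If $\nabla g(\mathbf x_k)=0$, the left-hand side is zero by the stated convention, so I would assume $s>0$ throughout.

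\emph{Case $s>\tau$.} Here I would invoke Lemma~\ref{lem:lambda_eps_bound} directly. Multiplying \eqref{eq:leps-lstar} by $\|\nabla g(\mathbf x_k)\|$ gives
\[
|\lambda_{\gamma,k}-\lambda_k|\,\|\nabla g(\mathbf x_k)\|\le \frac{\gamma_k\|\nabla f(\mathbf x_k)\|}{s+\gamma_k}\le\frac{\gamma_k G_f}{s+\gamma_k}.
\]
The map $s\mapsto \gamma_k/(s+\gamma_k)$ is nonincreasing, and $s>\tau$, so the right-hand side is at most $\gamma_kG_f/(\tau+\gamma_k)$. This is the first term of the claim. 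The only ingredient is $\|\nabla f(\mathbf x_k)\|\le G_f$, taken from the bounded-gradient assumption (Assumption~\ref{assum:bounded-stoch-g}).

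\emph{Case $0<s\le\tau$.} Since $\mathbf x_k\notin\mathcal R_k(\tau,\theta)$ and the norm condition in the definition of the bad region holds, the alignment condition must fail, that is, $a\ge-\theta s$. I would use this to bound both multipliers uniformly:
\[
\lambda_k=\max\{\beta_k-a/s,0\}\le\beta_k+\theta,
\qquad
\lambda_{\gamma,k}=\max\{\beta_k-a/(s+\gamma_k),0\}\le \beta_k+\theta\,\frac{s}{s+\gamma_k}\le\beta_k+\theta.
\]
Both multipliers are nonnegative, so their difference is at most $\beta_k+\theta$ in absolute value. Multiplying by $\|\nabla g(\mathbf x_k)\|=\sqrt s\le\sqrt\tau$ gives the second term $(\beta_k+\theta)\sqrt\tau$.

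There is no real obstacle here. The one point needing care is the second case: the crude bound of Lemma~\ref{lem:lambda_eps_bound} blows up as $s\to0$, so it must be replaced by the alignment-based bound. That bound relies on the sign information $-a\le\theta s$ together with the fact that both $\lambda_k$ and $\lambda_{\gamma,k}$ lie in $[0,\beta_k+\theta]$.
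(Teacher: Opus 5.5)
Your proof is correct and has the same structure as the paper's. You split at $\|\nabla g(\mathbf x_k)\|^2=\tau$, and the large-gradient case is handled identically, via Lemma~\ref{lem:lambda_eps_bound} plus monotonicity of $\gamma_k/(s+\gamma_k)$ in $s$.

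The only difference is in the small-gradient case. The paper splits further on the sign of $a=\nabla g(\mathbf x_k)^\top\nabla f(\mathbf x_k)$:
\begin{itemize}
\item when $a<0$, it computes the difference exactly as $\gamma_k(-a)/\bigl(s(s+\gamma_k)\bigr)$ to get $\theta\sqrt{\tau}$;
\item when $a\ge 0$, it uses $0\le\lambda_k\le\lambda_{\gamma,k}\le\beta_k$ to get $\beta_k\sqrt{\tau}$.
\end{itemize}
Your observation that both multipliers lie in $[0,\beta_k+\theta]$ reaches $(\beta_k+\theta)\sqrt{\tau}$ in one step with no sign split. The paper's version is marginally sharper, giving $\max\{\beta_k,\theta\}\sqrt{\tau}$, but the stated bound does not need this.
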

\begin{proof}
Let
$a_k\triangleq \|\nabla g(\mathbf x_k)\|^2,\  
b_k\triangleq \nabla g(\mathbf x_k)^\top\nabla f(\mathbf x_k)
$, and suppose $\mathbf x_k\notin \mathcal R_k(\tau,\theta)$. If
$a_k>\tau$, then from Lemma \ref{lem:lambda_eps_bound} we have $|\lambda_{\gamma,k}-\lambda_k|\|\nabla g(\mathbf x_k)\|
\le
\frac{\gamma_kG_f}{\tau+\gamma_k}$. 
It remains to consider $a_k\le \tau$. Since
$\mathbf x_k\notin\mathcal R_k(\tau,\theta)$, we must have
$b_k\ge -\theta a_k$.

If $b_k<0$, then $\lambda_k=\beta_k-\frac{b_k}{a_k}$ and $\lambda_{\gamma,k}
=
\beta_k-\frac{b_k}{a_k+\gamma_k}$, hence, 
\[
|\lambda_{\gamma,k}-\lambda_k|\sqrt{a_k}
=
\frac{\gamma_k(-b_k)}{a_k(a_k+\gamma_k)}\sqrt{a_k}
\le
\frac{\gamma_k\theta a_k}{a_k(a_k+\gamma_k)}\sqrt{a_k}
\le
\theta\sqrt{a_k}
\le
\theta\sqrt{\tau}.
\]
If $b_k\ge 0$, then $\lambda_{\gamma,k}\le \beta_k$ and
$\lambda_k\ge 0$, while $\lambda_{\gamma,k}\ge\lambda_k$. Therefore,
\[
|\lambda_{\gamma,k}-\lambda_k|\sqrt{a_k}
\le
\lambda_{\gamma,k}\sqrt{a_k}
\le
\beta_k\sqrt{\tau}.
\]
Combining the cases gives
\[
|\lambda_{\gamma,k}-\lambda_k|\|\nabla g(\mathbf x_k)\|
\le
\frac{\gamma_kG_f}{\tau+\gamma_k}
+
(\beta_k+\theta)\sqrt{\tau}.
\]
\end{proof}

Now, we are ready to present the proof of the convergence rate and complexity of the proposed method.

\begin{proof}[Proof of Theorem \ref{thm:theorem2}]
Let $\mathcal F_k$ denote the history after drawing independent mini-batch samples $\hat\xi_f^k,\hat\xi_g^k$ and before the independent direction mini-batch is drawn. We partition the underlying probability space at each iteration $k$ into the four disjoint events
\begin{align*}
    &E_k^{(0)}\triangleq \{\mathbf x_k\notin \mathcal{R}_k(\tau,\theta)\}\cap \{\mathbf x_k\notin \widehat{\mathcal{R}}_k(\tau/2,\theta)\},\\
    &E_k^{(1)}\triangleq \{\mathbf x_k\notin \mathcal{R}_k(\tau,\theta)\}\cap \{\mathbf x_k\in \widehat{\mathcal{R}}_k(\tau/2,\theta)\}\cap \{\norm*{\nabla g(\mathbf x_k)}^2> \tau\},\\
    &E_k^{(2)}\triangleq \{\mathbf x_k\notin \mathcal{R}_k(\tau,\theta)\}\cap \{\mathbf x_k\in \widehat{\mathcal{R}}_k(\tau/2,\theta)\}\cap \{\norm*{\nabla g(\mathbf x_k)}^2\leq \tau\},\\
    &E_k^{(3)}\triangleq \{\mathbf x_k\in \mathcal{R}_k(\tau,\theta)\}.
\end{align*}
With the above convention, each $E_k^{(j)}$ is $\mathcal F_k$-measurable.

\textbf{Case $E_k^{(0)}$.} On this event the multiplier is given by \eqref{eq:tild-lambda-eps-2} and $\mathbf x_k\notin \mathcal R_k(\tau,\theta)$, hence Lemma \ref{lem:lambda-bad-region-bound} applies verbatim. From the inequality in \eqref{eq:ed-bias-1} and Lemma \ref{lem:lambda-bad-region-bound} we conclude that
$$\|\mathbb E_k[e_d]\|\mathbf{1}_{E_k^{(0)}}\leq \frac{\nu_f}{\sqrt{N_f}} +\left(\beta_k+\frac{2G_f}{\sqrt{\gamma_k}}\right)\frac{\nu_g}{\sqrt{N_g}} +  \frac{
\gamma_k G_f}{\tau+\gamma_k}+(\beta_k+\theta)\sqrt{\tau} ,$$
where $\mathbf{1}_{E_k^{(0)}}$ denotes the indicator function on the set $E_k^{(0)}$, and from the proof of Lemma \ref{lem:bias-ed-gradg}-part(b) we obtain
$$\mathbb E_k\left[\|e_d\|^2\right]\mathbf{1}_{E_k^{(0)}}\leq \frac{12\nu_f^2}{N_f}+8\left(\beta_k+\frac{2G_f}{\sqrt{\gamma_k}}\right)^2\frac{\nu_g^2}{N_g}+\frac{4\gamma_k^2G_f^2}{(\tau+\gamma_k)^2}+4(\beta_k+\theta)^2\tau.$$

\textbf{Case $E_k^{(1)}$.} On this event we have $\norm*{\nabla \hat g_k}^2\leq \tau/2$ while $\norm*{\nabla g(\mathbf x_k)}^2> \tau$, hence, the triangle inequality implies that $\norm*{\nabla \hat g_k-\nabla g(\mathbf x_k)}\geq \sqrt{\tau}-\sqrt{\tau/2}=(1-\tfrac{1}{\sqrt{2}})\sqrt{\tau}$. Therefore, from Chebyshev inequality together with Assumption \ref{assum:stoch_grad} we obtain
$$\mathbb P\left(E_k^{(1)}\right)\leq \frac{\mathbb E\left[\norm*{\nabla \hat g_k-\nabla g(\mathbf x_k)}^2\right]}{(1-\tfrac{1}{\sqrt{2}})^2\tau}\leq \frac{\nu_g^2}{(1-\tfrac{1}{\sqrt{2}})^2N_g\tau}.$$
Note that this event need not be rare, and its contribution is controlled through the detector mini-batch size $N_g$ rather than through Assumption \ref{assump:rare-visit}.

\textbf{Case $E_k^{(2)}$.} Here the detector is active, hence the method sets $\tilde\lambda_{\gamma,k}=0$ and $\tilde{\mathbf d}_k=\nabla \tilde f_k$, while $\mathbf d_k=\nabla f(\mathbf x_k)+\lambda_k\nabla g(\mathbf x_k)$. Since $\mathbf x_k\notin \mathcal R_k(\tau,\theta)$ and $\norm*{\nabla g(\mathbf x_k)}^2\leq \tau$, we must have $\nabla g(\mathbf x_k)^\top \nabla f(\mathbf x_k)\geq -\theta\norm*{\nabla g(\mathbf x_k)}^2$, which together with \eqref{eq:tild-lambda1} implies that $\lambda_k\leq \beta_k+\theta$ and therefore
$$\lambda_k\norm*{\nabla g(\mathbf x_k)}\leq (\beta_k+\theta)\sqrt{\tau}.$$
Consequently, since $e_d=\nabla \tilde f_k-\nabla f(\mathbf x_k)-\lambda_k\nabla g(\mathbf x_k)$ on this event and $\nabla \tilde f_k$ is an unbiased estimator of $\nabla f(\mathbf x_k)$, we conclude that
$$\|\mathbb E_k[e_d]\|\mathbf 1_{E_k^{(2)}}\leq (\beta_k+\theta)\sqrt{\tau},\qquad \mathbb E_k\left[\|e_d\|^2\right]\mathbf 1_{E_k^{(2)}}\leq \frac{2\nu_f^2}{N_f}+2(\beta_k+\theta)^2\tau.$$
These bounds involve only the terms already present in Case $E_k^{(0)}$ up to absolute constants, hence no smallness of $\mathbb P(E_k^{(2)})$ is required.

\textbf{Case $E_k^{(3)}$.} On the other hand, from Assumption~\ref{assump:stoch-bound}, one can verify that when $\tilde{\lambda}_{\gamma,k}>0$, we have
$\|\mathbb E_k[e_d]\| \le 2(\beta_k+G_f)$; moreover, when $\tilde{\lambda}_{\gamma,k}=0$, we get $\|\mathbb E_k[e_d]\| =\|\lambda_{\gamma,k}\nabla g(\mathbf x_k)\|\leq \beta C_g+G_f$. Combining the two cases we obtain a uniform bound as $\|\mathbb E_k[e_d]\| \le (2+C_g)\beta_k +2G_f$. Hence, 
we obtain $\mathbb E\!\left[\|\mathbb E_k[e_d]\|\mathbf 1_{E_k^{(3)}}\right]
\le
((2+C_g)\beta_k+2G_f)\mathbb P(E_k^{(3)})$. 
Therefore, using the law of total expectation over the four disjoint events, we conclude that 
\begin{align*}
    \mathbb E\left[\|\mathbb E_k[e_d]\|\right]&=\sum_{j=0}^{3}\mathbb E\left[\|\mathbb E_k[e_d]\|\mathbf{1}_{E_k^{(j)}}\right]\\
    &\leq  \frac{\nu_f}{\sqrt{N_f}}+\left(\beta_k+\frac{2G_f}{\sqrt{\gamma_k}}\right)\frac{\nu_g}{\sqrt{N_g}}  + \frac{
\gamma_k G_f}{\tau+\gamma_k}+2(\beta_k+\theta)\sqrt{\tau} \\
&\quad +((2+C_g)\beta_k+2G_f)\left(\mathbb P(E_k^{(3)})+\frac{\nu_g^2}{(1-\tfrac{1}{\sqrt{2}})^2N_g\tau}\right).
\end{align*}
Let $p_K\triangleq \frac{1}{K}\sum_{k=0}^{K-1}\left(\mathbb P(E_k^{(3)})+\frac{\nu_g^2}{(1-\tfrac{1}{\sqrt{2}})^2N_g\tau}\right)$, and note that $p_K=\mathcal O(K^{-\varsigma}+\nu_g^2N_g^{-1}\tau^{-1})$ from Assumption \ref{assump:rare-visit}.  
Taking average over iterations along with parameter selection $\gamma_k=\gamma$ and $\beta_k=\beta$ we obtain
\begin{align*}
    \frac{1}{K}\sum_{k=0}^{K-1}\mathbb E\left[\|\mathbb E_k[e_d]\|\right]&\leq \frac{\nu_f}{\sqrt{N_f}} +\left(\beta+\frac{2G_f}{\sqrt{\gamma}}\right)\frac{\nu_g}{\sqrt{N_g}}  + \frac{
\gamma G_f}{\tau+\gamma} \\
&\quad +2(\beta+\theta)\sqrt{\tau}+2((2+C_g)\beta_k+2G_f)p_K.
\end{align*}

Using the same argument, for the second moment bound, from the proof of Lemma \ref{lem:bias-ed-gradg}-part(b), we obtain
\begin{align*}
\mathbb E\left[\mathbb E_k\left[\|e_d\|^2\right]\right]&\leq \frac{14\nu_f^2}{N_f} 
+8\left(\beta_k+\frac{2G_f}{\sqrt{\gamma_k}}\right)^2\frac{\nu_g^2}{N_g} 
+ \frac{4\gamma_k^2G_f^2
}{\left(\tau+\gamma_k\right)^2}+6(\beta_k+\theta)^2{\tau} \nonumber\\
&\quad + 4((2+C_g)\beta_k+2G_f)^2\left(\mathbb P(E_k^{(3)})+\frac{\nu_g^2}{(1-\tfrac{1}{\sqrt{2}})^2N_g\tau}\right).
\end{align*}
Taking average over iterations and selecting $\gamma_k=\gamma$ and $\beta_k=\beta$, we obtain
\begin{align*}
\frac{1}{K}\sum_{k=0}^{K-1}\mathbb E\left[\mathbb E_k\left[\|e_d\|^2\right]\right]&\leq \frac{14\nu_f^2}{N_f} 
+8\left(\beta+\frac{2G_f}{\sqrt{\gamma}}\right)^2\frac{\nu_g^2}{N_g} 
+ \frac{4\gamma^2G_f^2
}{\left(\tau+\gamma\right)^2}\nonumber\\
&\quad +6(\beta+\theta)^2{\tau} + 4((2+C_g)\beta+2G_f)^2p_K.
\end{align*}

Following the same line of proof as in Theorem \ref{thm:theorem1}, from \eqref{eq:bound-G-error}, we have that
\begin{align*}
\frac{1}{K}\sum_{k=0}^{K-1}\mathbb E[\mathcal G_k]
&\le
\frac{4\Delta f+8\beta\Delta g}{\eta K}
+\frac{5\Delta g}{2L_g\eta^2 K}
+4G_f^2\beta L_g\eta \nonumber\\
&\quad +
\left(4G_f\right)
\frac{1}{K}\sum_{k=0}^{K-1}\mathbb E\left[\norm*{\mathbb E_k \left[e_d \right]}\right] \nonumber\\
&\quad +
\Big(4L_g\eta\beta+2L_f\eta+\frac{1}{4L_g\eta\beta}+\frac{17}{4}\Big)
\frac{1}{K}\sum_{k=0}^{K-1}\mathbb E\left[\norm*{\mathbb E_k \left[e_d \right]}^2\right] \nonumber \\
&\quad +
\Big(4L_g\eta\beta+2L_f\eta+\tfrac14\Big)
\frac{1}{K}\sum_{k=0}^{K-1}\mathbb E\left[\norm*{e_d}^2 \right] \nonumber \\
&\quad +
\frac{4G_f}{K}
\sum_{k=0}^{K-1}
\sqrt{\left(1+L_g\eta\beta \right)\mathbb E\left[\norm*{\mathbb E_k\left[e_d \right]}^2\right] + L_g\eta\beta \mathbb E\left[\norm*{e_d}^2 \right]}.
\end{align*}
Selecting the parameters as $N_f=K^{a_f}$, $N_g=K^{a_g}\tau^{-1}$, $\eta=K^{-s}\in (0,1)$, $\gamma_k=K^{-r}\tau\in (0,1)$, $\theta=K^{-q}$, $\beta=K^{-q}\in (0,1)$, defining $\alpha=\min\{a_f/2,(a_g-r)/2,r\}\geq 0$, and noting that $p_K=\mathcal O(K^{-\varsigma}+K^{-a_g})=\mathcal O(K^{-\varsigma})$ whenever $a_g\geq \varsigma$, and replacing them into \eqref{eq:bound-G-error} we obtain
\begin{align*}
&\frac{1}{K}\sum_{k=0}^{K-1}\mathbb E[\mathcal G_k]=\mathcal O\Big(K^{2s-1}+K^{-(q+s)}+K^{-\alpha}+K^{s+q-2\alpha} + K^{-r}+K^{-q-r/2}+K^{-\varsigma}+K^{s+q-\varsigma}\Big)
\end{align*}
where the last term arises from the coefficient $\frac{1}{4L_g\eta\beta}=\mathcal O(K^{s+q})$ multiplying the averaged squared bias in \eqref{eq:bound-G-error}, whose bad-region contribution is of order $p_K$.
Therefore, letting $\tau=K^{-1/2}$, $s=q=\frac{1}{4}$, $r=\frac{1}{2}$, $a_f=1$, and $a_g=\frac{3}{2}$, we get $\alpha=\frac{1}{2}$, hence,
\begin{align*}
    &\frac{1}{K}\sum_{k=0}^{K-1}\mathbb E\left[\norm*{\mathbf d_{k}}^2\right] = \mathcal{O}\left(K^{-\frac{1}{2}} +K^{\frac{1}{2}-\varsigma}\right),\quad \frac{1}{K}\sum_{k=0}^{K-1}\mathbb E\left[\norm*{\nabla g(\mathbf x_{k})}^2\right] = \mathcal{O}\left(K^{-\frac{1}{2}} +K^{\frac{1}{2}-\varsigma} \right).
\end{align*}
Moreover, for $\varsigma\in (\frac{1}{2},1]$ the term $K^{\frac{1}{2}-\varsigma}$ dominates $K^{-\frac{1}{2}}$ and the above rate reduces to $\mathcal O(K^{-\delta})$ with $\delta=\varsigma-\frac{1}{2}$. Hence, to achieve $(\epsilon,\epsilon)$-stationary solution the required number of sample gradients for the upper-level function can be computed as $\sum_{k=0}^{K-1}N_f=K^{2}= \mathcal O(\epsilon^{-2/\delta})$, and for the lower-level function, we obtain $\sum_{k=0}^{K-1}N_g=K^3= \mathcal O(\epsilon^{-3/\delta})$. Finally, if $\mathcal R_k(\tau,\theta)$ is empty then Assumption \ref{assump:rare-visit} holds with $\varsigma=1$ and $\delta=\frac{1}{2}$, which implies that the number of sample gradients for the upper-level and lower-level functions are $\mathcal O(\epsilon^{-4})$ and $\mathcal O(\epsilon^{-6})$, respectively, otherwise the rate depends on $\varsigma$.
\end{proof}

\section{Oracle Complexity Analysis of Inexact Stochastic Approach}\label{section:inexact}

In this section, we analyze the application of two stochastic primal-dual algorithms for solving subproblem \eqref{eq:QP}, which leads to an inexact stochastic framework. Building on the error analysis of the gap function in \eqref{eq:bound-G-error}, together with the inequality $\|\mathbb  E_k[e_d]\|\leq \sqrt{\mathbb E_k[\|e_d\|^2]}$, we obtain
\begin{align}
\frac{1}{K}\sum_{k=0}^{K-1}\mathbb E[\mathcal G_k]
&\le
\frac{4\Delta f+8\beta\Delta g}{\eta K}
+\frac{5\Delta g}{2L_g\eta^2 K}
+4G_f^2\beta L_g\eta + \frac{4G_f}{K}\sum_{k=0}^{K-1}\sqrt{\mathbb E \left[\norm*{e_d}^2\right]} \nonumber\\
&\quad +
\Big(8L_g\eta\beta+4L_f\eta+\tfrac92+\frac{1}{4L_g\eta\beta}\Big)
\frac{1}{K}\sum_{k=0}^{K-1}\mathbb E\left[\norm*{e_d}^2 \right] \nonumber \\
&\quad +
\frac{4G_f}{K}
\sum_{k=0}^{K-1}
\sqrt{(1+2L_g\eta\beta) \mathbb E\left[\norm*{e_d}^2 \right]}.
\end{align}
By selecting the parameters $\eta=\beta=K^{-1/4}$, it follows that achieving the convergence rate $\mathcal O(K^{-1/2})$ requires the average error accumulation to satisfy $\frac{1}{K}\sum_{k=0}^{K-1}\mathbb E_k[\|e_d\|^2]=\mathcal O(K^{-1})$. 

This result highlights that any subroutine used to solve \eqref{eq:QP} must guarantee $\mathbb E_k[\|e_d\|^2]=\mathcal O(K^{-1})$ in order to ensure $\frac{1}{K}\sum_{k=0}^{K-1}\mathbb E[\mathcal G_k] = \mathcal O(K^{-1/2})$. To the best of our knowledge, the literature on stochastic constrained optimization with strongly convex objectives remains limited, with only two first-order methods currently available \cite{zhao2022accelerated,boob2023stochastic}. In what follows, we briefly review these methods and then specialize them to the subproblem in \eqref{eq:saddle-point}. This allows us to analyze the overall oracle complexity of the resulting inexact approach.

\paragraph{Strongly Convex Minimization with Functional Constrained Optimization.}

In \citep{boob2023stochastic}, the following composite functional constrained problem is considered:
\begin{equation}
    \min_{\mathbf x \in X} \ \psi_0(\mathbf x) := f_0(\mathbf x)+\chi_0(\mathbf x)
    \qquad
    \mathrm{s.t.} \qquad
    \psi_i(\mathbf x) := f_i(\mathbf x)+\chi_i(\mathbf x) \leq 0, \quad i=1,\ldots,m,
    \label{eq:fc_problem}
\end{equation}
where \(X\subseteq \mathbb{R}^n\) is compact and convex with diameter $D_X>0$, \(f_i\) are convex
possibly nonsmooth functions, and \(\chi_i\) are convex simple functions. The objective is assumed to be strongly convex with modulus \(\alpha_0>0\). Both the objective and the constraints are in expectation form and can be accessed through stochastic first-order information. 

Specializing their method to single constraint $m=1$ and smooth setting $\chi_0(\mathbf x)=\chi_i(\mathbf x)=0$, given \(\mathbf x\in X\), the oracle returns
\(G_0(\mathbf x,\xi)\), \(G_1(\mathbf x,\xi)\), and \(F(\mathbf x,\xi)\) satisfying
\begin{align*}
&\mathbb{E}[G_0(\mathbf x,\xi)] = \nabla f_0(\mathbf x), \qquad\qquad\quad
    \mathbb{E}[G_1(\mathbf x,\xi)] = \nabla f_1(\mathbf x), \\
&\mathbb{E}[F(\mathbf x,\xi)] = f_1(\mathbf x),
    \qquad\qquad\qquad
    \mathbb{E}\!\left[\|G_0(\mathbf x,\xi)-f_0'(\mathbf x)\|_*^2\right] \leq \sigma_0^2,\\
&\mathbb{E}\!\left[\|G_1(\mathbf x,\xi)-f_1'(\mathbf x)\|_*^2\right] \leq \sigma^2,
    \qquad
    \mathbb{E}\!\left[\|F(\mathbf x,\xi)-f_1(\mathbf x)\|_2^2\right] \leq \sigma_f^2 .
\end{align*}
Moreover, $f_0,f_1$ have Lipschitz gradients with constants $L_0,L_f$, respectively. 

In this paper, Constraint Extrapolation (ConEx) method is proposed. Let $
    \widehat{F}_t
    :=
    F(\mathbf x_{t-1},\widehat{\xi}_{t-1})
    +
    G(\mathbf x_{t-1},\widehat{\xi}_{t-1})^\top(\mathbf x_t-\mathbf x_{t-1})$ 
denote a stochastic linearization of the constraint vector \(f(\mathbf x_t)\). Starting from
\((\mathbf x_0,\mathbf y_0)\), with \(\mathbf x_{-1}=\mathbf x_0\), ConEx performs, for
\(t=0,\ldots,T-1\),
\begin{align*}
    s_t
    &=
    (1+\theta_t)\bigl(\widehat{F}_t\bigr)
    -
    \theta_t\bigl(\widehat{F}_{t-1}\bigr),
    \\
    \mathbf y_{t+1}
    &=
    \arg\min_{y\geq 0}
    \left\{
        -\langle s_t,\mathbf y\rangle
        +
        \frac{\tau_t}{2}\|\mathbf y-\mathbf y_t\|_2^2
    \right\}
    =
    \left[\mathbf y_t+\frac{1}{\tau_t}s_t\right]_+,
    \\
    \mathbf x_{t+1}
    &= \mathbf x_t - \eta \left(G_0(\mathbf x_t,\xi_t) + \mathbf y_{t+1} G_1(\mathbf x_t,\xi_t)\right).
\end{align*}

When the objective function is a strongly convex objective, Theorem~1 in their paper establishes the following convergence rate result 
\begin{equation}\label{eq:rate-conex}
\mathbb{E}\!\left[\|\mathbf x^\star-\mathbf x_T\|^2\right] \leq \mathcal O\left(\frac{t_0^2D_X^2+t_0|\mathbf y^*|^2}{T^2}+\frac{t_0 |\mathbf y^*|^2\sigma^2}{T}+\frac{\sqrt{t_0}\sigma_X |\mathbf y^*|^2}{B\sqrt{T}}\right)
\end{equation}
where $t_0=\frac{4(L_0+BL_f)}{\alpha_0}$, $\sigma_X=(\sigma_f^2+D_X^2\sigma^2)^{1/2}$, $\mathbf y^*$ denotes the dual optimal solution, and $B$ denotes the dual optimal bound, i.e., $|\mathbf y^*|\leq B$.

\paragraph{Complexity analysis of ConEx as a subroutine of DBGD.}

We now specialize the previous result to the subproblem \eqref{eq:QP}. In this setting, the dual optimal solution satisfies $\mathbf y^*=\lambda_k$, and the problem-dependent quantities can be identified as 
\begin{align*}
B=\beta+G_f\|\nabla g(\mathbf x_k)\|^{-1},\quad \sigma_f^2=\nu_g^2\|\mathbf d\|^2+2\beta_k^2\nu_g^2(\|\nabla g(\mathbf x_k)\|^2+\mathbb E_k[\|\nabla \tilde g(\mathbf x_k,\xi_g)\|^2]),\quad \sigma^2=\nu_g^2.
\end{align*}
Focusing on the dominant term in \eqref{eq:rate-conex}, we characterize the number of inner iterations required to control the inexactness error and derive the resulting overall sample complexity.

\begin{theorem}
Consider the algorithm update $\mathbf x_{k+1}=\mathbf x_k - \eta_k \tilde{\mathbf d}_k$ where $\tilde{\mathbf d}_k$ is an approximate solution of subproblem \eqref{eq:saddle-point} by implementing ConEx method with $T_k=\mathcal O(\|\nabla g(\mathbf x_k)\|^{-3} K^2)$ inner iterations. Let $\eta_k=\eta=K^{-1/4}$ and $\beta_k=\beta=K^{-1/4}$, then 
$$\frac{1}{K}\sum_{k=0}^{K-1}\mathbb E\left[\norm*{\mathbf d_{k}}^2\right] =\mathcal O\left(K^{-\frac{1}{2}}\right)
    ,\quad \frac{1}{K}\sum_{k=0}^{K-1}\mathbb E\left[\norm*{\nabla g(\mathbf x_{k})}^2\right] = \mathcal{O}\left(K^{-\frac{1}{2}}\right).$$
Moreover, after $K$ iterations overall sample complexity of the inexact stochastic primal-dual method using ConEx method is $\mathcal O(K^2\sum_{k=0}^{K-1}\|\nabla g(\mathbf x_k)\|^{-3})$. 
\end{theorem}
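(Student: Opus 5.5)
The plan is to reuse the outer-loop analysis of Proposition~\ref{thm:theorem1}, which has already been reduced to an error requirement. From the displayed bound on $\frac1K\sum_k\mathbb E[\mathcal G_k]$ with $\eta=\beta=K^{-1/4}$, the terms not involving $e_d$ are $\mathcal O(K^{-1/2})$. The coefficient $\frac{1}{4L_g\eta\beta}=\mathcal O(K^{1/2})$ multiplies $\frac1K\sum_k\mathbb E\|e_d\|^2$, while the square-root terms contribute $\mathcal O\big(\frac1K\sum_k\sqrt{\mathbb E\|e_d\|^2}\big)$. Hence it suffices to guarantee, conditionally on $\mathcal F_k$, that $\mathbb E_k\|e_d\|^2=\mathcal O(K^{-1})$ at every outer iteration. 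Then the squared-error term is $\mathcal O(K^{1/2}\cdot K^{-1})=\mathcal O(K^{-1/2})$ and the square-root term is $\mathcal O(K^{-1/2})$. The bounds on $\frac1K\sum\mathbb E\|\mathbf d_k\|^2$ and on $\frac1K\sum\mathbb E\|\nabla g(\mathbf x_k)\|^2$ then follow exactly as before, using $\|\mathbf d_k\|^2\le 2\mathcal G_k$ and $\|\nabla g(\mathbf x_k)\|^2\le \frac{4L_g\eta}{\beta}\mathcal G_k$ with $\eta/\beta=1$.

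The second step is to obtain this accuracy from ConEx applied to the subproblem \eqref{eq:QP}. The subproblem is $1$-strongly convex, so $\alpha_0=1$, and $e_d=\tilde{\mathbf d}_k-\mathbf d_k$ is exactly $\mathbf x_T-\mathbf x^\star$ in \eqref{eq:rate-conex}. I would substitute the identified quantities:
\begin{itemize}
\item $|\mathbf y^*|=\lambda_k\le B=\beta+G_f\|\nabla g(\mathbf x_k)\|^{-1}$ by Lemma~\ref{lem:lambda-ineq};
\item $t_0=4(1+BL_g)=\mathcal O(\|\nabla g(\mathbf x_k)\|^{-1})$, treating the regime where $\|\nabla g\|$ is small, since otherwise the bounds are only better;
\item $\sigma^2=\nu_g^2$, with $\sigma_X$ bounded using Assumption~\ref{assum:bounded-stoch-g} and a compact domain of diameter $D_X=\mathcal O(G_f+BC_g)$ that contains $\mathbf d_k$.
\end{itemize}
The dominant term is $t_0|\mathbf y^*|^2\sigma^2/T=\mathcal O(\|\nabla g(\mathbf x_k)\|^{-3}/T)$. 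The $1/T^2$ terms are of lower order once $T\gtrsim t_0$. For the $\sqrt{t_0}\sigma_X|\mathbf y^*|^2/(B\sqrt T)$ term, I would show that it is dominated in the same regime, or absorb it into the first term by arguing that $|\mathbf y^*|\le B$ cancels one factor. Requiring $\mathcal O(\|\nabla g\|^{-3}/T_k)\le K^{-1}$ gives $T_k=\mathcal O(\|\nabla g(\mathbf x_k)\|^{-3}K)$. To be safe against the slower $1/\sqrt T$ term, I would then check that the stated choice $T_k=\mathcal O(\|\nabla g(\mathbf x_k)\|^{-3}K^2)$ suffices for all terms. Under that larger choice, the $1/\sqrt T$ term is $\mathcal O(\|\nabla g\|^{-\cdot}K^{-1})$ after matching the powers of $\|\nabla g\|$.

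The last step is counting samples. Each ConEx inner iteration uses $\mathcal O(1)$ stochastic gradients of $f$ and $g$. The total cost is therefore $\sum_{k=0}^{K-1}T_k=\mathcal O\big(K^2\sum_k\|\nabla g(\mathbf x_k)\|^{-3}\big)$, as claimed.

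The main obstacle is bookkeeping the non-dominant terms of \eqref{eq:rate-conex}, especially the $1/\sqrt{T}$ term. Its dependence on $\|\nabla g(\mathbf x_k)\|$ through $t_0$, $\sigma_X$ and $D_X$ must not exceed the $\|\nabla g\|^{-3}$ scaling once $T_k\propto K^2\|\nabla g\|^{-3}$. I would first try to show that this term is at most $\mathcal O(\|\nabla g\|^{-3/2}T^{-1/2})\le K^{-1}$ under the chosen $T_k$. A second subtlety is that ConEx requires a compact $X$. I would handle this by choosing $X$ to be a ball around $\nabla f(\mathbf x_k)$ of radius $\mathcal O(\lambda_k C_g)$, which contains $\mathbf d_k$ and therefore does not change the solution of \eqref{eq:QP}.
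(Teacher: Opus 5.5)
Your route matches the paper's: use the averaged bound on $\mathcal G_k$ with $\eta=\beta=K^{-1/4}$ to reduce everything to $\mathbb E_k\|e_d\|^2=\mathcal O(K^{-1})$ per outer step. Then read off $T_k$ from \eqref{eq:rate-conex} with $B=\beta+G_f\|\nabla g(\mathbf x_k)\|^{-1}$ and $t_0=\mathcal O(B)$, and sum. The binding term is the $1/\sqrt{T}$ one, not the $1/T$ one you call dominant (that one only asks for $T\gtrsim B^3K$). With $\sigma_X=\mathcal O(1)$ it reads $\sqrt{t_0}\,\sigma_X|\mathbf y^*|^2/(B\sqrt{T})\lesssim B^{3/2}T^{-1/2}$. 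Requiring this to be at most $K^{-1}$ is exactly where the paper's $T_k=\mathcal O(B^3K^2)$ comes from.

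\textbf{Gap: your compact set $X$.} The step you need for that term fails with the $X$ you propose. A ball of radius $\mathcal O(\lambda_k C_g)$ has $D_X=\Theta(B)$ when $\|\nabla g(\mathbf x_k)\|$ is small. Then $\sigma_X^2=\sigma_f^2+D_X^2\nu_g^2$ is $\Theta(B^2)$; moreover $\sigma_f^2$ contains $\nu_g^2\|\mathbf d\|^2$ taken over $X$. The $1/\sqrt T$ term therefore behaves like $B^{5/2}T^{-1/2}$. For $T_k\propto B^3K^2$ this is $\mathcal O(B/K)$, not $\mathcal O(K^{-1})$. You would need $T_k\propto B^5K^2$, i.e.\ $\|\nabla g(\mathbf x_k)\|^{-5}$ in the complexity, so the hoped-for bound $\mathcal O(\|\nabla g\|^{-3/2}T^{-1/2})$ is unreachable as written.

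\textbf{Fix.} The missing observation is that $\mathbf d_k$ is bounded independently of $\|\nabla g(\mathbf x_k)\|$. By Lemma~\ref{lem:lambda-ineq}, $\lambda_k\|\nabla g(\mathbf x_k)\|\le\beta\|\nabla g(\mathbf x_k)\|+G_f$, so $\|\mathbf d_k\|\le 2G_f+\beta C_g$. Take $X$ to be a ball centered at the origin with radius slightly larger than $2G_f+\beta C_g$. Unlike a ball centered at the unknown $\nabla f(\mathbf x_k)$, this is implementable, and it leaves the solution of \eqref{eq:QP} unchanged. It also gives $D_X,\sigma_X=\mathcal O(1)$ uniformly. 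Your accounting then closes at $T_k=\mathcal O(\|\nabla g(\mathbf x_k)\|^{-3}K^2)$, and the sample count follows.

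As an aside, the constraint in \eqref{eq:QP} is linear in $\mathbf d$, so its gradient Lipschitz constant is $0$ and one could even take $t_0=\mathcal O(1)$. The choice $t_0=\mathcal O(B)$, which you and the paper use, is valid but pessimistic.
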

\begin{proof}
Based on the convergence rate of ConEx in \eqref{eq:rate-conex}, we have that at each iteration $k$, $\mathbb E[\|e_d\|^2]=\mathbb E[\|\mathbf d_k - \tilde{\mathbf d}_k\|^2]=\mathcal O(K^{-1})$ requires $T_k=\mathcal O(B^3 K^2)=\mathcal O(\|\nabla g(\mathbf x_k)\|^{-3} K^2)$ inner iterations. Therefore, the total sample gradients of the inexact stochastic primal-dual method using ConEx after $K$ iterations can be computed as $\sum_{k=0}^{K-1}T_k=\mathcal O(K^2\sum_{k=0}^{K-1}\|\nabla g(\mathbf x_k)\|^{-3})$. 
\end{proof}

\paragraph{Stochastic Restart for Strongly Convex Convex-Concave Saddle-Point Problems \cite{zhao2022accelerated}.}

In \cite{zhao2022accelerated}, the following stochastic convex--concave saddle-point problem is studied:
\[
    \min_{\mathbf x\in X}\max_{\mathbf y\in Y}
    \left\{
        S(\mathbf x,\mathbf y) := f(\mathbf x)+g(\mathbf x)+\Phi(\mathbf x,\mathbf y)-h(\mathbf y)
    \right\},
\]
where $X$ and $Y$ are convex and compact sets with diameters $\Omega_x,\Omega_y$, respectively, $f$ is $L$-smooth and
$\mu$-strongly convex with $\mu>0$, $g$ and $h$ admit tractable
Bregman proximal projections, and $\Phi$ is smooth convex--concave
with Lipschitz parameters $L_{xx},L_{yx},L_{yy}$. The algorithm has
access only to unbiased stochastic first-order oracles for
$\nabla f$, $\nabla_x\Phi$, and $\nabla_y\Phi$, with variance bounds $\sigma_{x,f}^2$, $\sigma_{x,\Phi}^2$, and $\sigma_{y,\Phi}^2$.
The goal is to find a primal--dual pair with a small duality gap  $G(\mathbf x,\mathbf y)
    \triangleq 
    \sup_{\mathbf y'\in Y} S(\mathbf x,\mathbf y') - \inf_{\mathbf x'\in X} S(\mathbf x',\mathbf y)$.

The proposed method is a multi-stage stochastic restart scheme,
Algorithm~2S in the paper. At each stage $k$, a localized variant of the stochastic primal-dual hybrid gradient subroutine, Algorithm~1R, is run on the restricted primal set $X_k \triangleq \{\mathbf x\in X:\|\mathbf x-\mathbf x_k\|\le R_k/2\}$, using a rescaled Bregman distance. After $J_k$ iterations, the method sets $\mathbf x_{k+1}$ to the output primal iterate and shrinks the radius as $R_{k+1} = \frac{R_k}{\sqrt{2}}$. 

From Theorem~4.3 in their paper and the discussions therein, we can conclude Stochastic Restart Primal-dual (SRPD) method has the following convergence rate guarantee:
\begin{align}\label{eq:restart}
\mathbb{E}\!\left[\|\mathbf x^\star-\mathbf x_T\|^2\right] \leq \mathcal O\left(\frac{L_{yx}\Omega_y^2}{T^2}+\frac{L_{yy}{\Omega_{y}^2}}{T}+\frac{(\sigma_{x,f}+\sigma_{x,\Phi})^2}{\mu T}+\frac{\sigma_{y,\Phi}\Omega_{y}\log(T)}{\sqrt{T}}\right).
\end{align}

\paragraph{Complexity analysis of SRPD as a subroutine of DBGD.} 
We now specialize the previous result to the subproblem \eqref{eq:saddle-point}. In this setting, $g\equiv 0$, $h\equiv 0$, $f=\frac{1}{2}\|d-\nabla f(\mathbf x_k)\|^2$, and $\Phi(\mathbf x,\mathbf y)=-\lambda(\nabla g(\mathbf x_k)^\top \mathbf d - \phi(\mathbf x_k))$ then the problem-dependent quantities can be identified as 
\begin{align*}
&\Omega_y=\beta+G_f\|\nabla g(\mathbf x_k)\|^{-1},\quad \sigma_{x,\Phi}^2=\lambda^2\nu_g^2,\\ &\sigma_{y,\Phi}^2=\nu_g^2\|\mathbf d\|^2+2\beta_k^2\nu_g^2(\|\nabla g(\mathbf x_k)\|^2+\mathbb E_k[\|\nabla \tilde g(\mathbf x_k,\xi_g)\|^2]).
\end{align*}
Focusing on the dominant term in \eqref{eq:restart}, we characterize the number of inner iterations required to control the inexactness error and derive the resulting overall sample complexity.

\begin{theorem}
Consider the algorithm update $\mathbf x_{k+1}=\mathbf x_k - \eta_k \tilde{\mathbf d}_k$ where $\tilde{\mathbf d}_k$ is an approximate solution of subproblem \eqref{eq:saddle-point} by implementing SRPD method with $T_k=\mathcal O(\|\nabla g(\mathbf x_k)\|^{-2} K^2)$ inner iterations. Let $\eta_k=\eta=K^{-1/4}$ and $\beta_k=\beta=K^{-1/4}$, then 
$$\frac{1}{K}\sum_{k=0}^{K-1}\mathbb E\left[\norm*{\mathbf d_{k}}^2\right] =\mathcal O\left(K^{-\frac{1}{2}}\right)
    ,\quad \frac{1}{K}\sum_{k=0}^{K-1}\mathbb E\left[\norm*{\nabla g(\mathbf x_{k})}^2\right] = \mathcal{O}\left(K^{-\frac{1}{2}}\right).$$
Moreover, after $K$ iterations overall sample complexity of the inexact stochastic primal-dual method using SRPD method is $\mathcal O(K^2\sum_{k=0}^{K-1}\|\nabla g(\mathbf x_k)\|^{-2})$.
\end{theorem}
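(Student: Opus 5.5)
The proof will mirror the one for the ConEx theorem, with the SRPD rate \eqref{eq:restart} replacing \eqref{eq:rate-conex}. The outer-loop analysis is already in place. Starting from the bound \eqref{eq:bound-G-error} specialized as at the beginning of this section (using $\|\mathbb E_k[e_d]\|\le\sqrt{\mathbb E_k[\|e_d\|^2]}$), I take $\eta=\beta=K^{-1/4}$. Then the deterministic terms $\frac{4\Delta f+8\beta\Delta g}{\eta K}$, $\frac{5\Delta g}{2L_g\eta^2K}$ and $4G_f^2\beta L_g\eta$ are all $\mathcal O(K^{-1/2})$. The error terms carry coefficients of order $G_f$ (multiplying $\sqrt{\mathbb E\|e_d\|^2}$) and $\frac{1}{4L_g\eta\beta}=\mathcal O(K^{1/2})$ (multiplying $\mathbb E\|e_d\|^2$). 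So it suffices to guarantee $\mathbb E_k[\|e_d\|^2]=\mathcal O(K^{-1})$ at every outer iteration. With this, the square-root terms are $\mathcal O(K^{-1/2})$ and the squared terms are $\mathcal O(K^{1/2}\cdot K^{-1})=\mathcal O(K^{-1/2})$. Hence $\frac1K\sum_k\mathbb E[\mathcal G_k]=\mathcal O(K^{-1/2})$. From $\|\mathbf d_k\|^2\le 2\mathcal G_k$ and $\|\nabla g(\mathbf x_k)\|^2\le \frac{4L_g\eta}{\beta}\mathcal G_k$ with $\eta/\beta=1$, both stated averages are $\mathcal O(K^{-1/2})$.

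The inner-loop step is to show that $T_k=\mathcal O(\|\nabla g(\mathbf x_k)\|^{-2}K^2)$ SRPD iterations give $\mathbb E\|e_d\|^2=\mathcal O(K^{-1})$. Here $e_d=\tilde{\mathbf d}_k-\mathbf d_k$, where $\tilde{\mathbf d}_k$ is the SRPD primal output and $\mathbf d_k$ is the unique solution of the $1$-strongly convex subproblem, so $\mu=1$. I substitute the identified constants into \eqref{eq:restart}. The dual domain can be restricted to $[0,\Omega_y]$ with $\Omega_y=\beta+G_f\|\nabla g(\mathbf x_k)\|^{-1}$, which contains $\lambda_k$ by Lemma~\ref{lem:lambda-ineq}. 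The coupling is bilinear, so $L_{yy}=0$ and $L_{yx}=\|\nabla g(\mathbf x_k)\|\le C_g$. The variances are $\sigma_{x,\Phi}^2\le\Omega_y^2\nu_g^2$ and $\sigma_{x,f}^2\le \nu_f^2$. For $\sigma_{y,\Phi}$, I use that the primal domain is compact: $\|\mathbf d\|$ is bounded by $G_f$ plus $\Omega_y C_g$, so $\sigma_{y,\Phi}$ is at most $\nu_g\,\mathcal O(1+\Omega_y)$.

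The terms of \eqref{eq:restart} then scale as follows:
\[
\frac{C_g\Omega_y^2}{T},\qquad \frac{(\nu_f+\Omega_y\nu_g)^2}{T},\qquad \frac{\sigma_{y,\Phi}\Omega_y\log T}{\sqrt T}.
\]
The last one dominates. Making it $\mathcal O(K^{-1})$ requires $\sqrt{T_k}\gtrsim \Omega_y^2K$ up to logarithms, i.e.\ $T_k\gtrsim \Omega_y^4K^2$.

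This is the main obstacle. A literal count gives $\|\nabla g\|^{-4}$, not the claimed $\|\nabla g\|^{-2}$. To match the statement I would take the ``dominant term'' exactly as the paper's discussion frames it. I would treat $\sigma_{y,\Phi}$ as $\mathcal O(1)$, using that along SRPD iterates the primal variable stays in the localized ball $X_k$ around $\mathbf d_k$. In addition, $\|\mathbf d_k\|$ is bounded independently of $\lambda_k$: the QP solution satisfies $\|\mathbf d_k\|\le \|\nabla f\|+\beta\|\nabla g\|$ by its projection structure. With this, the required inner iteration count is $T_k\gtrsim\Omega_y^2K^2$, up to the logarithmic factor absorbed into $\mathcal O$. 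Since $\Omega_y^2=\mathcal O(\|\nabla g(\mathbf x_k)\|^{-2})$ when $\|\nabla g\|$ is small, this gives $T_k=\mathcal O(\|\nabla g(\mathbf x_k)\|^{-2}K^2)$.

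Finally, each inner iteration uses $\mathcal O(1)$ stochastic gradients. Summing over $k$ gives total sample complexity $\sum_k T_k=\mathcal O(K^2\sum_{k=0}^{K-1}\|\nabla g(\mathbf x_k)\|^{-2})$, as claimed.
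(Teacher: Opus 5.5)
Your proposal is correct and follows the paper's route: with $\eta=\beta=K^{-1/4}$ you reduce the outer $\mathcal O(K^{-1/2})$ rate to the per-iteration requirement $\mathbb E\|e_d\|^2=\mathcal O(K^{-1})$. You then read off $T_k=\mathcal O(\Omega_y^2K^2)$ from the dominant $\sigma_{y,\Phi}\Omega_y\log T/\sqrt T$ term of \eqref{eq:restart}, using $\Omega_y=\mathcal O(\|\nabla g(\mathbf x_k)\|^{-1})$ and $\sigma_{y,\Phi}=\mathcal O(1)$, and sum over $k$.

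Your apparent $\Omega_y^4$ obstacle came only from replacing $\|\nabla g(\mathbf x_k)\|$ by $C_g$ in $\lambda\|\nabla g(\mathbf x_k)\|$. The fix you found, $\|\mathbf d_k\|\le\|\nabla f(\mathbf x_k)\|+\beta\|\nabla g(\mathbf x_k)\|$, is best used to take SRPD's compact primal domain $X$ as a ball of radius $\mathcal O(G_f+\beta C_g)$. That gives $\sigma_{y,\Phi}=\mathcal O(1)$ uniformly on $X$, without appealing to the localized balls, which are centered at stage iterates rather than at $\mathbf d_k$.
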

\begin{proof}
Based on the convergence rate of SRPD in \eqref{eq:restart}, we have that at each iteration $k$, $\mathbb E[\|e_d\|^2]=\mathbb E[\|\mathbf d_k - \tilde{\mathbf d}_k\|^2]=\mathcal O(K^{-1})$ requires $T_k=\mathcal O(\|\nabla g(\mathbf x_k)\|^{-2} K^2)$ inner iterations. Therefore, the total sample gradients for upper- and lower-level objectives of the inexact stochastic method can be computed as $\sum_{k=0}^{K-1}T_k=\mathcal O(K^2\sum_{k=0}^{K-1}\|\nabla g(\mathbf x_k)\|^{-2})$. 
\end{proof}

\begin{remark}
Comparing our proposed method in Algorithm \ref{alg:alg1} with the inexact stochastic approaches, we observe several notable advantages. First, our method employs a simpler update rule, operating within a single-loop structure, in contrast to the nested procedures required by inexact stochastic counterparts. 
More importantly, our method achieves substantially improved sample complexity. Specifically, it reduces the best sample complexities of inexact approaches from $\mathcal O(K^2\sum_{k=0}^{K-1}\|\nabla g(\mathbf x_k)\|^{-2})$ for both the upper- and lower-level problems to $\mathcal O(K^2)$ and $\mathcal O(K^{3/2}\sum_{k=0}^{K-1}\|\nabla g(\mathbf x_k)\|^{-2})$, respectively.
\end{remark}

\section{Proofs for Penalty-Regularized SDBPG}\label{sec:pr-sdbpg-proof}

\begin{proof}[Proof of Lemma~\ref{lem:pr-lambda-bounded}]
The numerator of $\hat{\lambda}_k$ satisfies $|\beta\sigma_k^2 - \nabla g(\mathbf{x}_k)^\top \nabla f(\mathbf{x}_k)| \leq \beta\sigma_k^2 + G_f\|\nabla g(\mathbf{x}_k)\|$, where $\sigma_k^2 = \|\nabla g(\mathbf{x}_k)\|^2 + \gamma$. Splitting:
\[
\frac{\mu\beta\sigma_k^2}{(1+\mu)\|\nabla g(\mathbf{x}_k)\|^2 + \gamma} \leq \frac{\mu\beta(\|\nabla g(\mathbf{x}_k)\|^2 + \gamma)}{(1+\mu)\|\nabla g(\mathbf{x}_k)\|^2 + \gamma} \leq \mu\beta,
\]
and by AM-GM, $(1+\mu)\|\nabla g(\mathbf{x}_k)\|^2 + \gamma \geq 2\sqrt{(1+\mu)\gamma}\|\nabla g(\mathbf{x}_k)\|$, so
\[
\frac{\mu G_f\|\nabla g(\mathbf{x}_k)\|}{(1+\mu)\|\nabla g(\mathbf{x}_k)\|^2 + \gamma} \leq \frac{\mu G_f}{2\sqrt{(1+\mu)\gamma}}.
\]
Combining gives $|\hat{\lambda}_k| \leq \mu\beta + \frac{\mu G_f}{2\sqrt{(1+\mu)\gamma}} = C_\Lambda$.
\end{proof}

We now present the key auxiliary lemmas required for the proof of Theorem~\ref{thm:pr-sdbpg}. Due to the uniform boundedness of the regularized multiplier (Lemma~\ref{lem:pr-lambda-bounded}), all bounds hold \emph{unconditionally} without requiring Assumption~\ref{assump:rare-visit}.

\begin{lemma}\label{lem:G1}
Define
$\tilde\beta:=\frac{\mu\beta}{2(1+\mu)}$, and $\Delta_0:=\frac{(1+\mu)G_f^2}{2\mu\beta}$.
Then, for all \(k\),
\[
\nabla g(\mathbf x_k)^\top \mathbf d_k
\ge
\tilde\beta\|\nabla g(\mathbf x_k)\|^2-\Delta_0.
\]
\end{lemma}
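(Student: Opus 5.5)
The plan is a direct algebraic computation from the closed form \eqref{eq:d-reg}, split according to whether the clipping $[\cdot]_+$ is active. Write $a\triangleq\|\nabla g(\mathbf x_k)\|^2$, $b\triangleq\nabla g(\mathbf x_k)^\top\nabla f(\mathbf x_k)$, $s\triangleq a+\gamma$ and $D\triangleq(1+\mu)a+\gamma$. Then $\nabla g(\mathbf x_k)^\top\mathbf d_k=b+\hat\lambda_k a$. The only external input is the bound $|b|\le G_f\sqrt a$, which follows from Cauchy--Schwarz and Assumption~\ref{assum:bounded-stoch-g}.

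\emph{Case $\hat\lambda_k=0$.} Here the bracket in \eqref{eq:d-reg} is nonpositive, so $b\ge\beta s\ge\beta a$. Since $\tilde\beta=\frac{\mu\beta}{2(1+\mu)}\le\beta$ and $\Delta_0\ge0$, we get $\nabla g(\mathbf x_k)^\top\mathbf d_k=b\ge\tilde\beta a-\Delta_0$ immediately.

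\emph{Case $\hat\lambda_k>0$.} Substitute $\hat\lambda_k=\mu(\beta s-b)/D$ and regroup:
\begin{align*}
b+\hat\lambda_k a
=b\Big(1-\frac{\mu a}{D}\Big)+\frac{\mu\beta s a}{D}
=\frac{s}{D}\,(b+\mu\beta a),
\end{align*}
where we used $D-\mu a=s$. The ratio satisfies $s/D\in[\frac{1}{1+\mu},1]$.

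Next, apply Young's inequality to the negative part of $b$:
\begin{align*}
b\ge -G_f\sqrt a\ge -\frac{\mu\beta}{2}a-\frac{G_f^2}{2\mu\beta},
\end{align*}
so that $b+\mu\beta a\ge\frac{\mu\beta}{2}a-\frac{G_f^2}{2\mu\beta}$. Multiply by $s/D$. The nonnegative term uses the lower bound $s/D\ge\frac1{1+\mu}$, giving $\frac{\mu\beta}{2(1+\mu)}a=\tilde\beta a$. The constant term uses the upper bound $s/D\le1$, giving at least $-\frac{G_f^2}{2\mu\beta}\ge-\Delta_0$.

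The one delicate point is that $b+\mu\beta a$ may have either sign, so one must not bound $s/D$ by the same endpoint throughout. The remedy is to apply the two endpoint bounds separately to the two pieces of the lower bound, each in the direction that keeps the inequality valid. Combining the two cases yields the claim for all $k$, with room to spare in the constant, since $\frac{G_f^2}{2\mu\beta}\le\frac{(1+\mu)G_f^2}{2\mu\beta}=\Delta_0$.
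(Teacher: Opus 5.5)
Your proposal is correct and matches the paper's proof essentially step for step. It uses the same split on whether the clipping is active, the same factorization $\nabla g(\mathbf x_k)^\top\mathbf d_k=\frac{s}{D}(b+\mu\beta a)$, and the same device of applying the lower endpoint $\frac{1}{1+\mu}$ of $s/D$ to the nonnegative piece and the upper endpoint $1$ to the negative piece; the only difference is that you apply Young's inequality before multiplying by $s/D$ rather than after, which yields the slightly sharper constant $\frac{G_f^2}{2\mu\beta}\le\Delta_0$.
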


\begin{proof}
For simplicity, write
$f_k:=\nabla f(\mathbf x_k),
g_k:=\nabla g(\mathbf x_k),
s_k:=\|g_k\|^2,
a_k:=g_k^\top f_k$.
The deterministic PR-SDBPG direction is
$\mathbf d_k=f_k+\hat\lambda_k g_k$,
where
$\hat\lambda_k
=
\left[
\frac{
\mu\left(\beta(s_k+\gamma)-a_k\right)
}{
(1+\mu)s_k+\gamma
}
\right]_+$ .
We consider two cases.

\textbf{Case 1: clipping is active.}
In this case \(\hat\lambda_k=0\), so \(\mathbf d_k=f_k\). The clipping condition implies
$a_k\ge \beta(s_k+\gamma)\ge \beta s_k$.
Therefore,
\[
g_k^\top \mathbf d_k
=
a_k
\ge
\beta s_k
\ge
\tilde\beta s_k-\Delta_0,
\]
because \(\beta\ge \tilde\beta\) and \(\Delta_0\ge 0\).

\textbf{Case 2: clipping is inactive.}
In this case, 
$\hat\lambda_k
=
\frac{
\mu\left(\beta(s_k+\gamma)-a_k\right)
}{
(1+\mu)s_k+\gamma
}$.
Therefore,
\[
g_k^\top \mathbf d_k
=
a_k+\hat\lambda_k s_k.
\]
Substituting the expression for \(\hat\lambda_k\), and rearranging, we obtain
\[
g_k^\top \mathbf d_k
=
\frac{(s_k+\gamma)(a_k+\mu\beta s_k)}
{(1+\mu)s_k+\gamma}.
\]
Using Cauchy--Schwarz and the boundedness of \(\nabla f\), we have
$a_k\ge -G_f\sqrt{s_k}$.
Thus,
\[
g_k^\top \mathbf d_k
\ge
\frac{(s_k+\gamma)(\mu\beta s_k-G_f\sqrt{s_k})}
{(1+\mu)s_k+\gamma}.
\]
Now use
$\frac{s_k+\gamma}{(1+\mu)s_k+\gamma}\ge \frac{1}{1+\mu}$
for the positive term \(\mu\beta s_k\), and
$\frac{s_k+\gamma}{(1+\mu)s_k+\gamma}\le 1$
for the negative term \(-G_f\sqrt{s_k}\). This gives
\[
g_k^\top \mathbf d_k
\ge
\frac{\mu\beta}{1+\mu}s_k
-
G_f\sqrt{s_k}.
\]
By Young's inequality, we have
\[
g_k^\top \mathbf d_k
\ge
\frac{\mu\beta}{2(1+\mu)}s_k
-
\frac{(1+\mu)G_f^2}{2\mu\beta}
=
\tilde\beta s_k-\Delta_0.
\]
This proves the claim.
\end{proof}
\begin{lemma}[Bias of the PR-SDBPG direction]\label{lem:G2}
Suppose Assumptions~\ref{assum:stoch_grad} and~\ref{assump:stoch-bound} hold, and assume
$0<\bar\gamma:=\frac{\gamma}{1+\mu}<1$.
Let
$\mathbf d_k=\nabla f(\mathbf x_k)+\hat\lambda_k\nabla g(\mathbf x_k)$, and
 $\tilde{\mathbf d}_k=\nabla \tilde f_k+\tilde\lambda_k\nabla \tilde g_k$,
where \(\hat\lambda_k\) and \(\tilde\lambda_k\) are defined by the penalty-regularized multiplier in
Algorithm~\ref{alg:pr-sdbpg}. Let \(e_d=\tilde{\mathbf d}_k-\mathbf d_k\). Then
\[
\|\mathbb E_k[e_d]\|
\le
\frac{\mu}{1+\mu}\frac{\nu_f}{\sqrt{N_f}}
+
\frac{\mu}{1+\mu}
\left(
\beta+4\beta\mu+2G_f\sqrt{\frac{1+\mu}{\gamma}}
\right)
\frac{\nu_g}{\sqrt{N_g}} .
\]
\end{lemma}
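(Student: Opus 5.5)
The plan is to reduce the PR-SDBPG multiplier to the form of the map $\Phi_\gamma$ in Lemma~\ref{lem:Lip-Phi}, after a rescaling of the denominator, and then repeat the bias argument of Lemma~\ref{lem:bias-ed-gradg}-part(a). In that argument the perturbation term does not appear here, because $\mathbf d_k$ is itself the regularized direction.

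\textbf{Step 1 (rewriting the multiplier).} Write $s=\|u\|^2$ with $u=\nabla g(\mathbf x_k)$ and $v=\nabla f(\mathbf x_k)$. Divide the numerator and denominator of $\hat\lambda_k$ by $1+\mu$ and set $\bar\gamma=\gamma/(1+\mu)$, so the denominator becomes $s+\bar\gamma$. Since $\gamma-\bar\gamma=\mu\bar\gamma$, we have $\beta(s+\gamma)=\beta(s+\bar\gamma)+\beta\mu\bar\gamma$. Using positive homogeneity of $[\cdot]_+$, this gives
\[
\hat\lambda_k\,u=\frac{\mu}{1+\mu}\left[\beta-\frac{u^\top v-c}{\|u\|^2+\bar\gamma}\right]_+u=\frac{\mu}{1+\mu}\,\Phi_{\bar\gamma}(u,v),\qquad c=-\beta\mu\bar\gamma .
\]
The stochastic quantity $\tilde\lambda_k\nabla\tilde g_k$ equals $\frac{\mu}{1+\mu}\Phi_{\bar\gamma}(\nabla\tilde g_k,\nabla\tilde f_k)$ by the same identity. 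Hence
\[
e_d=(\nabla\tilde f_k-\nabla f(\mathbf x_k))+\tfrac{\mu}{1+\mu}\big(\Phi_{\bar\gamma}(\nabla\tilde g_k,\nabla\tilde f_k)-\Phi_{\bar\gamma}(\nabla g(\mathbf x_k),\nabla f(\mathbf x_k))\big).
\]

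\textbf{Step 2 (Lipschitz bound with a negative shift).} Here $c<0$, while Lemma~\ref{lem:Lip-Phi} is stated for $c\ge 0$. This sign issue is the main point to check. Inspecting that proof, $c$ enters only through $|x^\top z-c|\le\|x\|\|z\|+|c|$. Therefore the same Lipschitz estimate holds with $c$ replaced by $|c|=\beta\mu\bar\gamma$, and the constant becomes
\[
\beta+\frac{4|c|}{\bar\gamma}+\frac{2\|z\|}{\sqrt{\bar\gamma}}=\beta+4\beta\mu+2\|z\|\sqrt{\frac{1+\mu}{\gamma}} .
\]
We apply it with $z=\nabla f(\mathbf x_k)$, the deterministic argument. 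By Jensen's inequality and Assumption~\ref{assump:stoch-bound}, $\|\nabla f(\mathbf x_k)\|\le\mathbb E\|\nabla\tilde f(\mathbf x_k,\xi_f)\|\le G_f$. This makes the constant deterministic given $\mathcal F_k$, which is essential for pulling it out of the conditional expectation.

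\textbf{Step 3 (taking the conditional expectation).} The term $\mathbb E_k[\nabla\tilde f_k-\nabla f(\mathbf x_k)]$ vanishes by unbiasedness (Assumption~\ref{assum:stoch_grad}). For the remaining term, Jensen's inequality, Step 2, and the mini-batch variance bounds $\mathbb E_k\|\nabla\tilde g_k-\nabla g\|\le\nu_g/\sqrt{N_g}$ and $\mathbb E_k\|\nabla\tilde f_k-\nabla f\|\le\nu_f/\sqrt{N_f}$ give
\[
\|\mathbb E_k[e_d]\|\le\frac{\mu}{1+\mu}\left[\Big(\beta+4\beta\mu+2G_f\sqrt{\tfrac{1+\mu}{\gamma}}\Big)\frac{\nu_g}{\sqrt{N_g}}+\frac{\nu_f}{\sqrt{N_f}}\right],
\]
which is exactly the claimed bound.

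The hypothesis $\bar\gamma<1$ is not needed for this estimate. I expect it is retained only for later use of the lemma.
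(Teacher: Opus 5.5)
Your argument is correct and matches the paper's proof: you rewrite $\hat\lambda_k\nabla g(\mathbf x_k)$ and $\tilde\lambda_k\nabla\tilde g_k$ as $\frac{\mu}{1+\mu}\Phi_{\bar\gamma}(\cdot,\cdot)$, apply Lemma~\ref{lem:Lip-Phi} with the deterministic $z=\nabla f(\mathbf x_k)$, and finish with unbiasedness and the mini-batch variance bounds. You are also right that $\bar\gamma<1$ is not used.

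There is one sign slip in Step~1. Since $\frac{\beta(s+\gamma)-u^\top v}{s+\bar\gamma}=\beta-\frac{u^\top v-\beta\mu\bar\gamma}{s+\bar\gamma}$, the shift is $c=\beta\mu\bar\gamma=\frac{\beta\mu\gamma}{1+\mu}\ge 0$, not $-\beta\mu\bar\gamma$. So Lemma~\ref{lem:Lip-Phi} applies verbatim, and your Step~2 extension to $|c|$, while valid, is not needed.
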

\begin{proof}
For simplicity, write
$u=\nabla \tilde g_k, v=\nabla \tilde f_k,
w=\nabla g(\mathbf x_k), z=\nabla f(\mathbf x_k)$.
Recall that the stochastic PR-SDBPG multiplier is
$\tilde\lambda_k
=
\left[
\frac{
\mu\left(\beta(\|u\|^2+\gamma)-u^\top v\right)
}{
(1+\mu)\|u\|^2+\gamma
}
\right]_+$ .
Define
$\bar\gamma=\frac{\gamma}{1+\mu}$, and
$c=\beta\gamma-\beta\bar\gamma
=
\frac{\beta\mu\gamma}{1+\mu}$.
Then, we can show
\[
\tilde\lambda_k 
=
\frac{\mu}{1+\mu}
\left[
\beta
-
\frac{u^\top v-c}{\|u\|^2+\bar\gamma}
\right]_+.
\]
Multiplying both sides by $u$ and defining $\Phi_{\bar\gamma}(u,v)
=
\left[
\beta
-
\frac{u^\top v-c}{\|u\|^2+\bar\gamma}
\right]_+u$, we have
\[
\tilde\lambda_k u
=
\frac{\mu}{1+\mu}
\Phi_{\bar\gamma}(u,v),
\]
Similarly,
$\hat\lambda_k w
=
\frac{\mu}{1+\mu}
\Phi_{\bar\gamma}(w,z)$.

Now decompose
$e_d
=
\nabla \tilde f_k-\nabla f(\mathbf x_k)
+
\tilde\lambda_k\nabla \tilde g_k
-
\hat\lambda_k\nabla g(\mathbf x_k)$.
Taking conditional expectation with respect to \(\mathcal F_k\), and using the unbiasedness of
\(\nabla \tilde f_k\), we obtain
\[
\|\mathbb E_k[e_d]\|
\le
\frac{\mu}{1+\mu}
\mathbb E_k
\left[
\|\Phi_{\bar\gamma}(u,v)-\Phi_{\bar\gamma}(w,z)\|
\right].
\]

By Lemma~\ref{lem:Lip-Phi} and Assumptions~\ref{assump:stoch-bound} and \ref{assum:stoch_grad}, 

\[
\|\mathbb E_k[e_d]\|
\le
\frac{\mu}{1+\mu}\frac{\nu_f}{\sqrt{N_f}}
+
\frac{\mu}{1+\mu}
\left(
\beta+4\beta\mu
+
2G_f\sqrt{\frac{1+\mu}{\gamma}}
\right)
\frac{\nu_g}{\sqrt{N_g}}.
\]
This proves the desired result.
\end{proof}
\begin{lemma}[Second moment of the PR-SDBPG direction error]\label{lem:G3}
Suppose Assumptions~\ref{assum:stoch_grad}, \ref{assum:bounded-stoch-g}, and~\ref{assump:stoch-bound} hold, and assume
$0<\bar\gamma:=\frac{\gamma}{1+\mu}<1$.
Let
$\mathbf d_k=\nabla f(\mathbf x_k)+\hat\lambda_k\nabla g(\mathbf x_k)$ and $
\tilde{\mathbf d}_k=\nabla \tilde f_k+\tilde\lambda_k\nabla \tilde g_k$,
and define \(e_d=\tilde{\mathbf d}_k- \mathbf d_k\). Let
$A_\gamma
=
\beta+4\beta\mu+2G_f\sqrt{\frac{1+\mu}{\gamma}}$.
Then,
\[
\mathbb E_k[\|e_d\|^2]
\le
\left(
2+\frac{4\mu^2}{(1+\mu)^2}
\right)\frac{\nu_f^2}{N_f}
+
\frac{4\mu^2}{(1+\mu)^2}
A_\gamma^2
\frac{\nu_g^2}{N_g}.
\]
\end{lemma}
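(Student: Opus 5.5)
The plan is to mirror the proof of Lemma~\ref{lem:G2}. The argument uses the rescaled representation from that proof together with the joint Lipschitz bound of Lemma~\ref{lem:Lip-Phi}. I would write $u=\nabla\tilde g_k$, $v=\nabla\tilde f_k$, $w=\nabla g(\mathbf x_k)$, $z=\nabla f(\mathbf x_k)$, with $\bar\gamma=\gamma/(1+\mu)$ and $c=\beta\mu\gamma/(1+\mu)$. Then $\tilde\lambda_k u=\frac{\mu}{1+\mu}\Phi_{\bar\gamma}(u,v)$ and $\hat\lambda_k w=\frac{\mu}{1+\mu}\Phi_{\bar\gamma}(w,z)$. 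The error decomposes as
\[
e_d=(v-z)+\tfrac{\mu}{1+\mu}\bigl(\Phi_{\bar\gamma}(u,v)-\Phi_{\bar\gamma}(w,z)\bigr),
\]
and by $\|a+b\|^2\le 2\|a\|^2+2\|b\|^2$,
\[
\mathbb E_k\|e_d\|^2\le 2\,\mathbb E_k\|v-z\|^2+\tfrac{2\mu^2}{(1+\mu)^2}\,\mathbb E_k\|\Phi_{\bar\gamma}(u,v)-\Phi_{\bar\gamma}(w,z)\|^2 .
\]
The first term is at most $2\nu_f^2/N_f$, using Assumption~\ref{assum:stoch_grad} and averaging over the independent mini-batch samples.

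For the second term, I apply Lemma~\ref{lem:Lip-Phi} in the variable order matching the $\Phi$ there, with first argument $u$ versus $w$ and second argument $v$ versus $z$. This gives
\[
\|\Phi_{\bar\gamma}(u,v)-\Phi_{\bar\gamma}(w,z)\|\le\Bigl(\beta+\tfrac{4c}{\bar\gamma}+\tfrac{2\|z\|}{\sqrt{\bar\gamma}}\Bigr)\|u-w\|+\|v-z\| .
\]
The Lipschitz factor depends only on the deterministic $z$, not on the random $v$. This is what lets us avoid any independence argument between the multiplier and $\nabla\tilde g_k$. I then compute the constant. We have $4c/\bar\gamma=4\beta\mu$, and $2\|z\|/\sqrt{\bar\gamma}\le 2G_f\sqrt{(1+\mu)/\gamma}$ by Assumption~\ref{assum:bounded-stoch-g}. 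So the factor is at most $A_\gamma$. Squaring with $(a+b)^2\le 2a^2+2b^2$ and taking $\mathbb E_k$ gives
\[
\mathbb E_k\|\Phi_{\bar\gamma}(u,v)-\Phi_{\bar\gamma}(w,z)\|^2\le 2A_\gamma^2\nu_g^2/N_g+2\nu_f^2/N_f .
\]

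Combining the pieces gives
\[
\mathbb E_k\|e_d\|^2\le \Bigl(2+\tfrac{4\mu^2}{(1+\mu)^2}\Bigr)\tfrac{\nu_f^2}{N_f}+\tfrac{4\mu^2}{(1+\mu)^2}A_\gamma^2\tfrac{\nu_g^2}{N_g},
\]
which is exactly the claimed bound. I do not expect a real obstacle here. The main care point is orienting Lemma~\ref{lem:Lip-Phi} so that the norm in the Lipschitz constant is $\|z\|$, the true gradient, which is uniformly bounded. The hypothesis $\bar\gamma<1$ is not needed for this computation; it is only carried over from Lemma~\ref{lem:G2}.
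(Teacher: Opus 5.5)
Your proposal is correct and follows the paper's proof essentially step for step. It uses the same rescaled representation $\tilde\lambda_k u=\frac{\mu}{1+\mu}\Phi_{\bar\gamma}(u,v)$ with $c=\beta\mu\gamma/(1+\mu)$, the same $2\|a\|^2+2\|b\|^2$ split, and the same application of Lemma~\ref{lem:Lip-Phi}, oriented so that the constant involves the deterministic $\|\nabla f(\mathbf x_k)\|\le G_f$; your remark that $\bar\gamma<1$ is not actually used is also accurate.
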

\begin{proof}
For simplicity, write
$u=\nabla \tilde g_k,
v=\nabla \tilde f_k,
w=\nabla g(\mathbf x_k),
z=\nabla f(\mathbf x_k)$.
As in Lemma~\ref{lem:G2}, define
$\bar\gamma=\frac{\gamma}{1+\mu}$ and $
c=\beta\gamma-\beta\bar\gamma
=
\frac{\beta\mu\gamma}{1+\mu}$.
Then, defining $\Phi_{\bar\gamma}(u,v)
=
\left[
\beta-
\frac{u^\top v-c}{\|u\|^2+\bar\gamma}
\right]_+u $, the penalty-regularized dual-gradient product can be written as
\[
\tilde\lambda_k u
=
\frac{\mu}{1+\mu}\Phi_{\bar\gamma}(u,v),
\qquad
\hat\lambda_k w
=
\frac{\mu}{1+\mu}\Phi_{\bar\gamma}(w,z),
\]

Therefore,
\[
e_d
=
v-z
+
\frac{\mu}{1+\mu}
\left(
\Phi_{\bar\gamma}(u,v)-\Phi_{\bar\gamma}(w,z)
\right).
\]
Using \(\|a+b\|^2\le 2\|a\|^2+2\|b\|^2\), we obtain
\[
\mathbb E_k[\|e_d\|^2]
\le
2\mathbb E_k[\|v-z\|^2]
+
2\left(\frac{\mu}{1+\mu}\right)^2
\mathbb E_k
\left[
\|\Phi_{\bar\gamma}(u,v)-\Phi_{\bar\gamma}(w,z)\|^2
\right].
\]

Next, using \((a+b)^2\le 2a^2+2b^2\) on Lemma~\ref{lem:Lip-Phi}, and taking conditional expectation along with Assumptions \ref{assum:stoch_grad} and \ref{assum:bounded-stoch-g}, we can show


\[
\mathbb E_k
\left[
\|\Phi_{\bar\gamma}(u,v)-\Phi_{\bar\gamma}(w,z)\|^2
\right]
\le
2A_\gamma^2\frac{\nu_g^2}{N_g}
+
2\frac{\nu_f^2}{N_f},
\]
where $A_\gamma \triangleq\beta+4\beta\mu
+
2G_f\sqrt{\frac{1+\mu}{\gamma}}$. Substituting this into the bound for \(\mathbb E_k[\|e_d\|^2]\), we obtain
\[
\mathbb E_k[\|e_d\|^2]
\le
\left(
2+\frac{4\mu^2}{(1+\mu)^2}
\right)
\frac{\nu_f^2}{N_f}
+
\frac{4\mu^2}{(1+\mu)^2}
A_\gamma^2
\frac{\nu_g^2}{N_g}.
\]
This proves the claim.
\end{proof}

\begin{proof}[Proof of Theorem~\ref{thm:pr-sdbpg}]
We prove the result in two steps. First, we show that the Lyapunov descent controls the
regularized DBGD direction \(\mathbf d_k\). Second, we use a separate descent argument on \(g\)
to control \(\|\nabla g(\mathbf x_k)\|^2\).

Throughout the proof, we denote
$f_k:=\nabla f(\mathbf  x_k), g_k:=\nabla g(\mathbf x_k),
s_k:=\|g_k\|^2, a_k:=g_k^\top f_k$.

From Lemma~\ref{lem:pr-lambda-bounded}, the multiplier is uniformly bounded, i.e, $0\le \hat\lambda_k\le C_\Lambda$.
Define
$c:=C_\Lambda+1$.
Then
$c-\hat\lambda_k\ge 1$.
We also define the Lyapunov function
$\Phi_k:=f(\mathbf x_k)+c\,g(\mathbf x_k)$,
and the constants
$L_\Phi:=L_f+cL_g$ and $
G_\Phi:=G_f+cC_g$.

By Assumption~\ref{assum:bounded-stoch-g} and Jensen's inequality,
\[
\|\nabla f(\mathbf x_k)+c\nabla g(\mathbf x_k)\|\le G_\Phi.
\]

We first prove a deterministic cross-term bound. Let
$y_k:=g_k^\top \mathbf d_k$.
We claim that
\[
(c-\hat\lambda_k)y_k
\ge
-\delta_c,
\quad \text{where}\quad
\delta_c
:=
\left(C_\Lambda+1-\mu\beta\right)\frac{G_f^2}{4\mu\beta}.
\]
Indeed, if \(y_k\ge 0\), then since \(c-\hat\lambda_k\ge 0\), we immediately have
$(c-\hat\lambda_k)y_k\ge 0$.
Now suppose \(y_k<0\). In this case, clipping cannot be active. To see this, if
\(\hat\lambda_k=0\), then \(\mathbf d_k=f_k\), and the clipping condition implies
\[
a_k=g_k^\top f_k>\beta(s_k+\gamma)\ge 0.
\]
Thus \(y_k=g_k^\top \mathbf d_k=a_k>0\), which contradicts \(y_k<0\). Therefore,
\(\hat\lambda_k=\lambda_k^{\rm reg}\). 

Using \(\mathbf d_k=f_k+\hat\lambda_k g_k\), we obtain
\[
y_k
=
g_k^\top \mathbf d_k
=
a_k+\hat\lambda_k s_k.
\]
Substituting the expression for \(\hat\lambda_k\), and rearranging, we get
\[
y_k
=
\frac{
(s_k+\gamma)(a_k+\mu\beta s_k)
}{
(1+\mu)s_k+\gamma
}.
\]
Since \(y_k<0\), it follows that
$a_k+\mu\beta s_k<0$.
Using this inequality in the expression for \(\hat\lambda_k\), we have
\[
\hat\lambda_k
=
\frac{
\mu\left(\beta(s_k+\gamma)-a_k\right)
}{
(1+\mu)s_k+\gamma
}
>
\frac{
\mu\left(\beta(s_k+\gamma)+\mu\beta s_k\right)
}{
(1+\mu)s_k+\gamma
}.
\]
Since
$\beta(s_k+\gamma)+\mu\beta s_k
=
\beta\left((1+\mu)s_k+\gamma\right)$,
we conclude that
$\hat\lambda_k>\mu\beta$.
Therefore, in the case \(y_k<0\),
\[
0\le c-\hat\lambda_k\le c-\mu\beta
=
C_\Lambda+1-\mu\beta.
\]
Next, using \(a_k\ge -G_f\sqrt{s_k}\), we have
\[
y_k
=
\frac{
(s_k+\gamma)(a_k+\mu\beta s_k)
}{
(1+\mu)s_k+\gamma
}
\ge
a_k+\mu\beta s_k
\ge
-G_f\sqrt{s_k}+\mu\beta s_k,
\]
where we used
$\frac{s_k+\gamma}{(1+\mu)s_k+\gamma}\le 1$
together with \(a_k+\mu\beta s_k<0\). Therefore,
\[
-y_k
\le
G_f\sqrt{s_k}-\mu\beta s_k
\le
\max_{r\ge 0}\left\{G_f r-\mu\beta r^2\right\}
=
\frac{G_f^2}{4\mu\beta}.
\]
Combining the previous two bounds gives
\[
(c-\hat\lambda_k)y_k
\ge
-\left(C_\Lambda+1-\mu\beta\right)\frac{G_f^2}{4\mu\beta}
=
-\delta_c.
\]

We now turn to the Lyapunov descent. Since \(f\) and \(g\) have Lipschitz gradients,
$\Phi(\mathbf x)=f(\mathbf x)+c g(\mathbf x)$
has \(L_\Phi=L_f+cL_g\)-Lipschitz gradient. Using
$\mathbf x_{k+1}=\mathbf x_k-\eta\tilde{\mathbf d}_k=\mathbf x_k-\eta(\mathbf d_k+e_d)$ and \(\|\nabla\Phi(\mathbf x_k)\|\le G_\Phi\),
we obtain
\[
\mathbb E_k[\Phi_k-\Phi_{k+1}]
\ge
\eta \nabla\Phi(\mathbf x_k)^\top \mathbf d_k
-
\eta G_\Phi\|\mathbb E_k[e_d]\|
-
\frac{L_\Phi\eta^2}{2}
\mathbb E_k\|\mathbf d_k+e_d\|^2.
\]
Moreover,
$\nabla\Phi(\mathbf x_k)^\top \mathbf d_k
=
(f_k+c g_k)^\top \mathbf d_k$.
Therefore,
\[
(f_k+c g_k)^\top \mathbf d_k
=
(\mathbf d_k-\hat\lambda_k g_k+c g_k)^\top \mathbf d_k
=
\|\mathbf d_k\|^2+(c-\hat\lambda_k)g_k^\top \mathbf d_k.
\]
By the cross-term bound above,
\[
(f_k+c g_k)^\top \mathbf d_k
\ge
\|\mathbf d_k\|^2-\delta_c.
\]
and,
\[
\mathbb E_k\|\mathbf d_k+e_d\|^2
\le
2\|\mathbf d_k\|^2+2\mathbb E_k\|e_d\|^2.
\]
Thus,
\[
\mathbb E_k[\Phi_k-\Phi_{k+1}]
\ge
\eta\|\mathbf d_k\|^2
-\eta\delta_c
-\eta G_\Phi\|\mathbb E_k[e_d]\|
-
L_\Phi\eta^2\|\mathbf d_k\|^2
-
L_\Phi\eta^2\mathbb E_k\|e_d\|^2.
\]
Choosing $\eta\le \frac{1}{2L_\Phi}$, and rearranging gives
\[
\|\mathbf d_k\|^2
\le
\frac{2}{\eta}\mathbb E_k[\Phi_k-\Phi_{k+1}]
+
2\delta_c
+
2G_\Phi\|\mathbb E_k[e_d]\|
+
2L_\Phi\eta\mathbb E_k\|e_d\|^2.
\]

Taking total expectation, summing over \(k=0,\ldots,K-1\), dividing by \(K\), and defining 
$B_{\max}:=\max_{0\le k\le K-1}\|\mathbb E_k[e_d]\|$ and $
V_{\max}:=\max_{0\le k\le K-1}\mathbb E_k\|e_d\|^2$, and let $\Delta_\Phi:=\Phi_0-\Phi^\star$, where $\Phi^\star:=\inf_{\mathbf x} \Phi(\mathbf x)$, we obtain
\[
\frac{1}{K}\sum_{k=0}^{K-1}\mathbb E\|\mathbf d_k\|^2
\le
\frac{2\Delta_\Phi}{\eta K}
+
2\delta_c
+
2G_\Phi B_{\max}
+
2L_\Phi\eta V_{\max}.
\]


By Lemmas~\ref{lem:G2} and~\ref{lem:G3}, there exist absolute constants
\(\bar C_1,\bar C_2,\bar C_3,\bar C_4>0\) such that
\[
B_{\max}
\le
\bar C_1\frac{\nu_f}{\sqrt{N_f}}
+
\bar C_2 A_\gamma \frac{\nu_g}{\sqrt{N_g}},\quad
\text{and}\quad
V_{\max}
\le
\bar C_3\frac{\nu_f^2}{N_f}
+
\bar C_4 A_\gamma^2\frac{\nu_g^2}{N_g}.
\]

Now choose
$\mu=1, \gamma=1, \beta=\frac{4G_f^3}{\epsilon}$.
Then,
$A_\gamma=\mathcal{O}\left(\frac{G_f^3}{\epsilon}\right),
C_\Lambda=\mathcal{O}\left(\frac{G_f^3}{\epsilon}\right),
c=\mathcal{O}\left(\frac{G_f^3}{\epsilon}\right)$.
Moreover,
$G_\Phi=G_f+cC_g
=
\mathcal{O}\left(\frac{G_f^3C_g}{\epsilon}\right)$ and $
L_\Phi=L_f+cL_g$.
The cross-term constant satisfies
\[
\delta_c
=
\left(C_\Lambda+1-\mu\beta\right)\frac{G_f^2}{4\mu\beta}
=
\mathcal{O}(\epsilon).
\]
Choose the batch sizes so that
$G_\Phi B_{\max}\le \mathcal{O}(\epsilon)$.
It is sufficient to take
$N_f
=
\mathcal{O}\left(
\frac{\nu_f^2G_\Phi^2}{\epsilon^2}
\right)$ and $
N_g
=
\mathcal{O}\left(
\frac{A_\gamma^2\nu_g^2G_\Phi^2}{\epsilon^2}
\right)$.
Under the above parameter choices, this becomes
\[
N_f
=
\mathcal{O}\left(
\frac{\nu_f^2G_f^6C_g^2}{\epsilon^4}
\right),
\qquad
N_g
=
\mathcal{O}\left(
\frac{G_f^{12}\nu_g^2C_g^2}{\epsilon^6}
\right).
\]
With these choices,
$B_{\max}=\mathcal{O}\left(\frac{\epsilon}{G_\Phi}\right)$ and 
$V_{\max}
=
\mathcal{O}\left(\frac{\epsilon^2}{G_\Phi^2}\right)$. Since \(\eta\le 1/(2L_\Phi)\), thus,
$L_\Phi\eta V_{\max}=\mathcal{O}(\epsilon)$.
Therefore,
\[
\frac{1}{K}\sum_{k=0}^{K-1}\mathbb E\|\mathbf d_k\|^2
\le
\frac{2\Delta_\Phi}{\eta K}+\mathcal{O}(\epsilon).
\]
Choosing
$K
=
\mathcal{O}\left(\frac{\Delta_\Phi}{\eta\epsilon}\right)$, for \(k^*\sim \mathrm{Unif}\{0,\ldots,K-1\}\),

\[
\mathbb E\|\mathbf d_{k^*}\|^2\le \mathcal{O}(\epsilon).
\]

It remains to control \(\|\nabla g(\mathbf x_k)\|^2\). By Assumption \ref{assump:gradf-g-lip},
\[
\mathbb E_k[g(\mathbf x_{k+1})]
\le
g(\mathbf x_k)
-\eta g_k^\top \mathbf d_k
-\eta g_k^\top \mathbb E_k[e_d]
+
\frac{L_g\eta^2}{2}\mathbb E_k\|\mathbf d_k+e_d\|^2.
\]
Using Lemma~\ref{lem:G1},
and applying
\[
|g_k^\top \mathbb E_k[e_d]|
\le
C_g\|\mathbb E_k[e_d]\|
\le
C_g B_{\max}.
\]
and,
\[
\mathbb E_k\|\mathbf d_k+e_d\|^2
\le
2\|\mathbf d_k\|^2+2\mathbb E_k\|e_d\|^2.
\]
Therefore, rearranging gives
\[
\tilde\beta\|g_k\|^2
\le
\frac{1}{\eta}\mathbb E_k[g(\mathbf x_k)-g(\mathbf x_{k+1})]
+
\Delta_0
+
C_gB_{\max}
+
L_g\eta\|\mathbf d_k\|^2
+
L_g\eta\mathbb E_k\|e_d\|^2.
\]
Taking total expectation, summing over \(k=0,\ldots,K-1\), dividing by \(K\tilde\beta\), and defining $\Delta_g:=g(\mathbf x_0)-g^\star$ we obtain
\[
\frac{1}{K}\sum_{k=0}^{K-1}\mathbb E\|g_k\|^2
\le
\frac{\Delta_g}{\eta K\tilde\beta}
+
\frac{\Delta_0}{\tilde\beta}
+
\frac{C_gB_{\max}}{\tilde\beta}
+
\frac{L_g\eta}{\tilde\beta}
\left(
\frac{1}{K}\sum_{k=0}^{K-1}\mathbb E\|\mathbf d_k\|^2
+
V_{\max}
\right).
\]

With \(\mu=1\), we have
$\tilde\beta=\frac{\beta}{4}=\mathcal{O}(\frac{G_f^3}{\epsilon})$.
Moreover,
$\Delta_0=\frac{G_f^2}{\beta}=\mathcal{O}(\epsilon)$,
and therefore
$\frac{\Delta_0}{\tilde\beta}=\mathcal{O}(\epsilon^2)$.
Using the chosen batch sizes,
$\frac{C_gB_{\max}}{\tilde\beta}=\mathcal{O}(\epsilon)$.
Also, since
$\frac{1}{K}\sum_{k=0}^{K-1}\mathbb E\|\mathbf d_k\|^2\le \mathcal{O}(\epsilon)$,
 and \(V_{\max}\) is controlled by the chosen batch sizes, we get
\[
\frac{L_g\eta}{\tilde\beta}
\left(
\frac{1}{K}\sum_{k=0}^{K-1}\mathbb E\|\mathbf d_k\|^2
+
V_{\max}
\right)
\le
\mathcal{O}(\epsilon).
\]
Finally, choosing
$K
=
\mathcal{O}\left(\frac{\Delta_g}{\eta\tilde\beta\epsilon}\right)$
ensures
$\frac{\Delta_g}{\eta K\tilde\beta}\le \mathcal{O}(\epsilon)$.
Hence,
\[
\frac{1}{K}\sum_{k=0}^{K-1}\mathbb E\|\nabla g(\mathbf x_k)\|^2
\le
\mathcal{O}(\epsilon).
\]
Therefore, for \(k^*\sim\mathrm{Unif}\{0,\ldots,K-1\}\),
\[
\mathbb E\|\nabla g(\mathbf x_{k^*})\|^2\le \mathcal{O}(\epsilon).
\]

Combining the two bounds, we obtain
\[
\mathbb E\|\mathbf d_{k^*}\|^2\le \mathcal{O}(\epsilon),
\qquad
\mathbb E\|\nabla g(\mathbf x_{k^*})\|^2\le \mathcal{O}(\epsilon).
\]

The required number of iterations is
\[
K
=
\mathcal{O}\left(
\frac{\Delta_\Phi}{\eta\epsilon}
+
\frac{\Delta_g}{\eta\tilde\beta\epsilon}
\right).
\]
Using $\eta=\frac{\epsilon}{5G_f^3L_\Phi}$, $
\tilde\beta=\frac{G_f^3}{\epsilon}$,
$L_\Phi=L_f+cL_g$, $\Delta_\Phi=\mathcal O(c)$, and $
c=\mathcal{O}\left(\frac{G_f^3}{\epsilon}\right)$,
we obtain, up to problem-dependent constants,
$K=\mathcal{O}(G_f^9L_g\epsilon^{-4})$.
Therefore, 
\[
f-\text{total sample complexity}:K N_f
=
\mathcal{O}(\epsilon^{-4})\cdot \mathcal{O}(\epsilon^{-4})
=
\mathcal{O}(\epsilon^{-8})
\]
\[
g-\text{total sample complexity}:K N_g
=
\mathcal{O}(\epsilon^{-4})\cdot \mathcal{O}(\epsilon^{-6})
=
\mathcal{O}(\epsilon^{-10}).
\]
\end{proof}

\section{Proofs for Variance-Reduced Penalty-Regularized SDBPG}\label{app:vr_pr_sdbpg}


The convergence guarantee of PR-SDBPG in Theorem~\ref{thm:pr-sdbpg} uses large mini-batches to
control the stochastic direction error. In particular, the dominant condition is
$
G_\Phi B_{\max}\lesssim \mathcal{O}(\epsilon),
$,
where \(G_\Phi=G_f+cC_g\) and \(c=C_\Lambda+1\). Since \(c=\mathcal{O}(\epsilon^{-1})\),
the stochastic error is amplified by the Lyapunov weight, leading to the
per-iteration lower-level batch size \(N_g=\mathcal{O}(\epsilon^{-6})\).

In this section, we show that this dependence can be improved by incorporating
STORM-type gradient tracking~\cite{cutkosky2019momentum}. The key reason variance
reduction is effective for PR-SDBPG is that the penalty-regularized direction map
is uniformly Lipschitz: the denominator
$
(1+\mu)\|\nabla g\|^2+\gamma
$
is always lower bounded by \(\gamma>0\). This avoids the degeneracy that appears in
the original SDBPG direction near lower-level stationary points.

\paragraph{Additional assumption.}
We use the following standard mean-square smoothness assumption for stochastic
gradients.

\begin{assumption}[Mean-square smooth stochastic gradients]\label{ass:ms_smooth}
For \(h\in\{f,g\}\), there exists \(\mathcal L_h>0\) such that, for all \(\mathbf x,\mathbf y\),
\[
\mathbb E_\xi
\left[
\|\nabla \tilde h(\mathbf x;\xi)-\nabla \tilde h(\mathbf y;\xi)\|^2
\right]
\le
\mathcal L_h^2\|\mathbf x-\mathbf y\|^2.
\]
\end{assumption}


\begin{algorithm}[t!]
\caption{Variance-Reduced PR-SDBPG}
\label{alg:vr_pr_sdbpg}
\begin{algorithmic}[1]
\State Input: step size \(\eta\), parameters \(\beta,\mu,\gamma\), batch sizes \(N_f,N_g\), STORM parameters \(\alpha_f,\alpha_g\).
\State Initialize \(\mathbf x_0\in\mathbb R^n\).
\State Draw initial batches and set
\[
v_f^0=\frac1{N_f}\sum_{i=1}^{N_f}\nabla \tilde f(\mathbf x_0;\xi_{f,i}^0),
\qquad
v_g^0=\frac1{N_g}\sum_{i=1}^{N_g}\nabla \tilde g(\mathbf x_0;\xi_{g,i}^0).
\]
\For{\(k=0,1,\ldots,K-1\)}
\State Compute the regularized multiplier
\[
\tilde\lambda_k
=
\left[
\frac{
\mu\left(\beta(\|v_g^k\|^2+\gamma)-(v_g^k)^\top v_f^k\right)
}{
(1+\mu)\|v_g^k\|^2+\gamma
}
\right]_+ .
\]
\State Set
\[
\tilde{\mathbf d}_k=v_f^k+\tilde\lambda_k v_g^k.
\]
\State Update
\[
\mathbf x_{k+1}=\mathbf x_k-\eta\tilde{\mathbf d}_k.
\]
\State For each \(h\in\{f,g\}\), draw a fresh independent batch \(\{\zeta_{h,i}^k\}_{i=1}^{N_h}\) and update
\[
v_h^{k+1}
=
\frac1{N_h}\sum_{i=1}^{N_h}\nabla \tilde h(\mathbf x_{k+1};\zeta_{h,i}^k)
+
(1-\alpha_h)
\left[
v_h^k
-
\frac1{N_h}\sum_{i=1}^{N_h}\nabla \tilde h(\mathbf x_k;\zeta_{h,i}^k)
\right].
\]
\EndFor
\end{algorithmic}
\end{algorithm}
The same mini-batch \(\{\zeta_{h,i}^k\}_{i=1}^{N_h}\) is used to evaluate both
\(\nabla \tilde h(\mathbf x_{k+1};\zeta_{h,i}^k)\) and
\(\nabla \tilde h(\mathbf x_k;\zeta_{h,i}^k)\); this shared-sample difference is essential
for the STORM tracking bound.
The update in Line~8 is the standard STORM recursion. Conditional on the past,
\[
\mathbb E_k[v_h^{k+1}-\nabla h(\mathbf x_{k+1})]
=
(1-\alpha_h)(v_h^k-\nabla h(\mathbf x_k)).
\]
This contraction property is used in the tracking analysis below.

\subsection{Lipschitz property of the regularized direction map}

Define, for \(p,q\in\mathbb R^n\),
\[
\Lambda(p,q)
:=
\left[
\frac{
\mu\left(\beta(\|q\|^2+\gamma)-q^\top p\right)
}{
(1+\mu)\|q\|^2+\gamma
}
\right]_+,
\qquad
\Psi(p,q):=p+\Lambda(p,q)q.
\]
Thus, the deterministic direction and the variance-reduced stochastic direction are
\[
\mathbf d_k=\Psi(\nabla f(\mathbf x_k),\nabla g(\mathbf x_k)),
\qquad
\tilde{\mathbf d}_k=\Psi(v_f^k,v_g^k).
\]

\begin{lemma}[Lipschitz property of the PR-SDBPG direction map]\label{lem:psi_lip}
Suppose \(0<\bar\gamma:=\gamma/(1+\mu)<1\). Define
\[
\Lambda(p,q)
:=
\left[
\frac{
\mu\left(\beta(\|q\|^2+\gamma)-q^\top p\right)
}{
(1+\mu)\|q\|^2+\gamma
}
\right]_+,
\qquad
\Psi(p,q):=p+\Lambda(p,q)q.
\]
Assume \(\|p_2\|\le G_f\). Then, for any \(p_1,p_2,q_1,q_2\),
\[
\|\Psi(p_1,q_1)-\Psi(p_2,q_2)\|
\le
L_\Psi^f\|p_1-p_2\|
+
L_\Psi^g\|q_1-q_2\|,
\]
where
$
L_\Psi^f:=1+\frac{\mu}{1+\mu},
$
and
$
L_\Psi^g
:=
\frac{\mu}{1+\mu}
\left(
\beta+4\beta\mu
+
2G_f\sqrt{\frac{1+\mu}{\gamma}}
\right).
$
In particular, for \(\mu=1\) and \(\gamma=1\),
\(
L_\Psi^f=\Theta(1),
L_\Psi^g=\Theta(\beta).
\)
\end{lemma}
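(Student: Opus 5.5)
Following the proofs of Lemmas~\ref{lem:G2} and~\ref{lem:G3}, I would reduce everything to Lemma~\ref{lem:Lip-Phi}. Set $\bar\gamma=\gamma/(1+\mu)$ and $c=\beta\gamma-\beta\bar\gamma=\frac{\beta\mu\gamma}{1+\mu}\ge 0$. Dividing numerator and denominator of $\Lambda$ by $1+\mu$ gives, for all $p,q$,
\[
\Lambda(p,q)\,q=\frac{\mu}{1+\mu}\left[\beta-\frac{q^\top p-c}{\|q\|^2+\bar\gamma}\right]_+q=\frac{\mu}{1+\mu}\Phi_{\bar\gamma}(q,p).
\]
Hence
\[
\Psi(p_1,q_1)-\Psi(p_2,q_2)=(p_1-p_2)+\frac{\mu}{1+\mu}\big(\Phi_{\bar\gamma}(q_1,p_1)-\Phi_{\bar\gamma}(q_2,p_2)\big).
\]

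Next I would apply Lemma~\ref{lem:Lip-Phi} with $u=q_1$, $v=p_1$, $w=q_2$, $z=p_2$, using $\gamma\leftarrow\bar\gamma$ and the constant $c$ above. This gives
\[
\|\Phi_{\bar\gamma}(q_1,p_1)-\Phi_{\bar\gamma}(q_2,p_2)\|\le\Big(\beta+\tfrac{4c}{\bar\gamma}+\tfrac{2\|p_2\|}{\sqrt{\bar\gamma}}\Big)\|q_1-q_2\|+\|p_1-p_2\|.
\]
The order of the split in that lemma matters here: its coefficient depends only on $\|z\|=\|p_2\|$, which is why the hypothesis is placed on $p_2$ (the deterministic gradient) and not on the stochastic $p_1$.

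To simplify the constants, note $c/\bar\gamma=\beta\mu$, so $4c/\bar\gamma=4\beta\mu$. Also $\|p_2\|\le G_f$ gives $2\|p_2\|/\sqrt{\bar\gamma}\le 2G_f\sqrt{(1+\mu)/\gamma}$. Multiplying by $\mu/(1+\mu)$ and adding the $\|p_1-p_2\|$ term from the first piece yields $L_\Psi^f=1+\frac{\mu}{1+\mu}$ and the stated $L_\Psi^g$. The hypothesis $\bar\gamma<1$ is not actually needed for the inequality; it is only inherited from the earlier lemmas.

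For the special case $\mu=\gamma=1$, we get $L_\Psi^f=3/2$ and $L_\Psi^g=\tfrac12(5\beta+2\sqrt2 G_f)$. With $\beta=4G_f^3/\epsilon$ large, the $\beta$ term dominates, so $L_\Psi^g=\Theta(\beta)$.

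The main obstacle is verifying the rescaling identity exactly, in particular that the clipping $[\cdot]_+$ commutes with the positive factor $\mu/(1+\mu)$. Once that holds, the rest is direct substitution into Lemma~\ref{lem:Lip-Phi}.
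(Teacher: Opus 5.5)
Your proposal is correct and matches the paper's proof step for step. The paper uses the same rescaling $\bar\gamma=\gamma/(1+\mu)$, $c=\beta\mu\gamma/(1+\mu)$ to write $\Lambda(p,q)q=\frac{\mu}{1+\mu}\Phi_{\bar\gamma}(q,p)$, and then applies the triangle inequality and Lemma~\ref{lem:Lip-Phi} with $z=p_2$. Your side remarks are accurate refinements of points the paper leaves unstated: the hypothesis $\bar\gamma<1$ is never used, and $L_\Psi^g=\Theta(\beta)$ requires $\beta\gtrsim G_f$, which holds for $\beta=4G_f^3/\epsilon$.
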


\begin{proof}
Let
\(
\bar\gamma=\frac{\gamma}{1+\mu},
c=\beta\gamma-\beta\bar\gamma
=
\frac{\beta\mu\gamma}{1+\mu}.
\)
Then
\[
\Lambda(p,q)
=
\frac{\mu}{1+\mu}
\left[
\beta-
\frac{q^\top p-c}{\|q\|^2+\bar\gamma}
\right]_+.
\]
Multiply both side by $q$, and defining $\Phi_{\bar\gamma}(q,p)
=
\left[
\beta-
\frac{q^\top p-c}{\|q\|^2+\bar\gamma}
\right]_+q$, we obtain,
\[
\Lambda(p,q)q
=
\frac{\mu}{1+\mu}\Phi_{\bar\gamma}(q,p),
\]
Now, we can rewrite $\Psi$ as,
$\Psi(p,q)
=
p+\frac{\mu}{1+\mu}\Phi_{\bar\gamma}(q,p)$.
Therefore,
\[
\begin{aligned}
\|\Psi(p_1,q_1)-\Psi(p_2,q_2)\|
&\le
\|p_1-p_2\|
+
\frac{\mu}{1+\mu}
\|\Phi_{\bar\gamma}(q_1,p_1)-\Phi_{\bar\gamma}(q_2,p_2)\|.
\end{aligned}
\]
By Lemma~\ref{lem:Lip-Phi}, using \(\|p_2\|\le G_f\),
\[
\|\Phi_{\bar\gamma}(q_1,p_1)-\Phi_{\bar\gamma}(q_2,p_2)\|
\le
\left(
\beta+\frac{4c}{\bar\gamma}
+
\frac{2G_f}{\sqrt{\bar\gamma}}
\right)
\|q_1-q_2\|
+
\|p_1-p_2\|.
\]
Since
\(
\frac{c}{\bar\gamma}=\beta\mu,
\frac{1}{\sqrt{\bar\gamma}}
=
\sqrt{\frac{1+\mu}{\gamma}},
\)
we obtain
\[
\|\Phi_{\bar\gamma}(q_1,p_1)-\Phi_{\bar\gamma}(q_2,p_2)\|
\le
\left(
\beta+4\beta\mu
+
2G_f\sqrt{\frac{1+\mu}{\gamma}}
\right)
\|q_1-q_2\|
+
\|p_1-p_2\|.
\]
Substituting this bound into the previous inequality gives
\[
\|\Psi(p_1,q_1)-\Psi(p_2,q_2)\|
\le
\left(1+\frac{\mu}{1+\mu}\right)\|p_1-p_2\|
+
\frac{\mu}{1+\mu}
\left(
\beta+4\beta\mu
+
2G_f\sqrt{\frac{1+\mu}{\gamma}}
\right)
\|q_1-q_2\|.
\]
This proves the claim.
\end{proof}
\subsection{STORM tracking error}

Define the tracking errors 
$e_{h,k}:=v_h^k-\nabla h(\mathbf x_k),
\qquad h\in\{f,g\}$. 
We also define 
$\bar\sigma_h^2
:=
\frac1K\sum_{k=0}^{K-1}\mathbb E\|e_{h,k}\|^2.$

For the variance-reduced method, we let \(\mathcal F_k\) denote the history up to
iteration \(k\), including \(\mathbf x_k,v_f^k,v_g^k\). Conditional expectations
\(\mathbb E_k[\cdot]\) in this section are taken with respect to this filtration.
\begin{lemma}[Average STORM tracking error]\label{lem:storm_tracking}
Suppose Assumption~\ref{ass:ms_smooth} and the bounded-variance oracle assumption hold.
Let \(v_h^k\) be generated by Algorithm~\ref{alg:vr_pr_sdbpg}, and suppose
\(
0<\alpha_h\le 1.
\)
Then
\[
\bar\sigma_h^2
\le
\frac{2\nu_h^2}{\alpha_h K N_h}
+
\frac{4\alpha_h\nu_h^2}{N_h}
+
\frac{4\mathcal L_h^2\eta^2}{\alpha_h N_h}
\left(
\frac1K\sum_{k=0}^{K-1}\mathbb E\|\tilde{\mathbf d}_k\|^2
\right).
\]
In particular, if
\(
\alpha_h=\frac{\mathcal L_h\eta}{\nu_h}\le 1,
\)
then
\[
\bar\sigma_h^2
\le
\frac{2\nu_h^3}{\mathcal L_h\eta K N_h}
+
\frac{4\mathcal L_h\eta\nu_h}{N_h}
+
\frac{4\mathcal L_h\eta\nu_h}{N_h}
\left(
\frac1K\sum_{k=0}^{K-1}\mathbb E\|\tilde{\mathbf d}_k\|^2
\right).
\]
\end{lemma}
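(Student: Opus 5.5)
The proof is the standard STORM tracking argument. It runs a one-step recursion for $\mathbb E\|e_{h,k}\|^2$, then telescopes the recursion over $k$.

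\emph{Step 1: decompose the error.} From the update in Line~8 we write
\[
e_{h,k+1}=(1-\alpha_h)e_{h,k}+\alpha_h\,\zeta^{(1)}_{h,k}+(1-\alpha_h)\,\zeta^{(2)}_{h,k}.
\]
Here $\zeta^{(1)}_{h,k}:=\frac1{N_h}\sum_i\nabla\tilde h(\mathbf x_{k+1};\zeta^k_{h,i})-\nabla h(\mathbf x_{k+1})$ is the mini-batch noise at $\mathbf x_{k+1}$. The second term is
\[
\zeta^{(2)}_{h,k}:=\frac1{N_h}\sum_i\big[\nabla\tilde h(\mathbf x_{k+1};\zeta^k_{h,i})-\nabla\tilde h(\mathbf x_k;\zeta^k_{h,i})\big]-\big(\nabla h(\mathbf x_{k+1})-\nabla h(\mathbf x_k)\big),
\]
the centered shared-sample difference. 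The fresh batch is independent of $\mathcal F_k$ and of $\mathbf x_{k+1}$, because $\mathbf x_{k+1}$ is determined by $\mathcal F_k$. Both $\zeta^{(1)}_{h,k}$ and $\zeta^{(2)}_{h,k}$ therefore have zero conditional mean, and the cross term with $e_{h,k}$ vanishes. This gives
\[
\mathbb E_k\|e_{h,k+1}\|^2\le(1-\alpha_h)^2\|e_{h,k}\|^2+2\alpha_h^2\mathbb E_k\|\zeta^{(1)}_{h,k}\|^2+2\mathbb E_k\|\zeta^{(2)}_{h,k}\|^2 .
\]
The variances of i.i.d. averages scale as $1/N_h$. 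Using Assumption~\ref{assum:stoch_grad} and Assumption~\ref{ass:ms_smooth}, together with the fact that centering does not increase variance, we get $\mathbb E_k\|\zeta^{(1)}_{h,k}\|^2\le\nu_h^2/N_h$ and $\mathbb E_k\|\zeta^{(2)}_{h,k}\|^2\le\mathcal L_h^2\eta^2\|\tilde{\mathbf d}_k\|^2/N_h$, since $\mathbf x_{k+1}-\mathbf x_k=-\eta\tilde{\mathbf d}_k$.

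\emph{Step 2: telescope.} We use $(1-\alpha_h)^2\le 1-\alpha_h$ and take total expectations. Summing over $k=0,\dots,K-1$ and moving $\mathbb E\|e_{h,K}\|^2\ge0$ to the left gives
\[
\alpha_h\sum_{k=0}^{K-1}\mathbb E\|e_{h,k}\|^2\le\mathbb E\|e_{h,0}\|^2+\frac{2\alpha_h^2\nu_h^2K}{N_h}+\frac{2\mathcal L_h^2\eta^2}{N_h}\sum_{k=0}^{K-1}\mathbb E\|\tilde{\mathbf d}_k\|^2 .
\]
The initialization is a plain mini-batch, so $\mathbb E\|e_{h,0}\|^2\le\nu_h^2/N_h$. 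Dividing by $\alpha_hK$ yields the stated bound, with constants $1\le2$, $2\le4$ and $2\le4$. The authors' factors of $2$ and $4$ leave slack, for example for using $\|a+b\|^2\le2\|a\|^2+2\|b\|^2$ once more.

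\emph{Step 3: specialize.} Substitute $\alpha_h=\mathcal L_h\eta/\nu_h$. The first term becomes $2\nu_h^3/(\mathcal L_h\eta KN_h)$. The second term becomes $4\mathcal L_h\eta\nu_h/N_h$. In the third term, $\mathcal L_h^2\eta^2/\alpha_h=\mathcal L_h\eta\nu_h$, which gives $4\mathcal L_h\eta\nu_h/N_h$ times the average of $\mathbb E\|\tilde{\mathbf d}_k\|^2$.

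\emph{Main obstacle.} The delicate step is the measurability bookkeeping in Step~1. We must verify that $\mathbf x_{k+1}$ is $\mathcal F_k$-measurable, since $\tilde{\mathbf d}_k$ depends only on $v_f^k$ and $v_g^k$, and that the fresh batch is independent of $\mathcal F_k$. This is what kills the cross terms and lets the shared-sample difference be bounded by $\mathcal L_h^2\|\mathbf x_{k+1}-\mathbf x_k\|^2/N_h$. Everything after that is routine algebra.
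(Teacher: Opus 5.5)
Your proof is correct and follows the paper's argument. The paper uses the same one-step recursion: a conditionally mean-zero decomposition that kills the cross term with $e_{h,k}$, the bounds $\nu_h^2/N_h$ and $\mathcal L_h^2\eta^2\|\tilde{\mathbf d}_k\|^2/N_h$ from bounded variance and mean-square smoothness, and a telescope using $\mathbb E\|e_{h,0}\|^2\le\nu_h^2/N_h$. It likewise obtains constants $1,2,2$ before enlarging them to $2,4,4$. Your only difference is a cosmetic regrouping. You attach the $\alpha_h$-weighted noise to $\mathbf x_{k+1}$ and weight the shared-sample difference by $1-\alpha_h$, whereas the paper uses the noise at $\mathbf x_k$ and an unweighted difference term; the two give the same bound.
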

\begin{proof}
Let \(h_k:=\nabla h(\mathbf x_k)\). From the STORM update,
\[
v_h^{k+1}
=
\frac1{N_h}\sum_{i=1}^{N_h}\nabla \tilde h(\mathbf x_{k+1};\zeta_{h,i}^k)
+
(1-\alpha_h)
\left[
v_h^k
-
\frac1{N_h}\sum_{i=1}^{N_h}\nabla \tilde h(\mathbf x_k;\zeta_{h,i}^k)
\right].
\]
Subtracting \(h_{k+1}\), we obtain
\[
e_{h,k+1}
=
(1-\alpha_h)e_{h,k}
+
\Delta_{h,k}
+
\alpha_h\xi_{h,k},
\]
where
\[
\Delta_{h,k}
:=
\frac1{N_h}\sum_{i=1}^{N_h}
\left[
\nabla \tilde h(\mathbf x_{k+1};\zeta_{h,i}^k)
-
\nabla \tilde h(\mathbf x_k;\zeta_{h,i}^k)
-
(h_{k+1}-h_k)
\right],
\]
and
\[
\xi_{h,k}
:=
\frac1{N_h}\sum_{i=1}^{N_h}
\left[
\nabla \tilde h(\mathbf x_k;\zeta_{h,i}^k)-h_k
\right].
\]

Both \(\Delta_{h,k}\) and \(\xi_{h,k}\) have conditional mean zero. Hence,
$\mathbb E_k\left[
e_{h,k}^\top(\Delta_{h,k}+\alpha_h\xi_{h,k})
\right]=0$.
Therefore,
\[
\mathbb E_k\|e_{h,k+1}\|^2
=
(1-\alpha_h)^2\|e_{h,k}\|^2
+
\mathbb E_k\|\Delta_{h,k}+\alpha_h\xi_{h,k}\|^2.
\]
Using \((1-\alpha_h)^2\le 1-\alpha_h\) and
$\|\Delta_{h,k}+\alpha_h\xi_{h,k}\|^2
\le
2\|\Delta_{h,k}\|^2+2\alpha_h^2\|\xi_{h,k}\|^2$,
we obtain
\[
\mathbb E_k\|e_{h,k+1}\|^2
\le
(1-\alpha_h)\|e_{h,k}\|^2
+
2\mathbb E_k\|\Delta_{h,k}\|^2
+
2\alpha_h^2\mathbb E_k\|\xi_{h,k}\|^2.
\]
By bounded variance,
$\mathbb E_k\|\xi_{h,k}\|^2\le \frac{\nu_h^2}{N_h}$, and
by Assumption~\ref{ass:ms_smooth},
\[
\mathbb E_k\|\Delta_{h,k}\|^2
\le
\frac{\mathcal L_h^2}{N_h}\|\mathbf x_{k+1}-\mathbf x_k\|^2
=
\frac{\mathcal L_h^2\eta^2}{N_h}\|\tilde{\mathbf d}_k\|^2.
\]
Thus,
\[
\mathbb E_k\|e_{h,k+1}\|^2
\le
(1-\alpha_h)\|e_{h,k}\|^2
+
\frac{2\alpha_h^2\nu_h^2}{N_h}
+
\frac{2\mathcal L_h^2\eta^2}{N_h}\|\tilde{\mathbf d}_k\|^2.
\]
Summing over \(k=0,\ldots,K-1\), using
\(
\mathbb E\|e_{h,0}\|^2\le \frac{\nu_h^2}{N_h},
\)
and dividing by \(\alpha_h K\), gives the desired bound after enlarging constants.
\end{proof}

\begin{proof}[Proof of Theorem~\ref{rem:vr-pr-sdbpg}]
Let
$\mathbf  d_k:=\Psi(\nabla f(\mathbf  x_k),\nabla g(\mathbf  x_k))$ and $
\tilde {\mathbf d}_k:=\Psi(v_f^k,v_g^k)$,
and define the direction error
$e_k:=\tilde {\mathbf d}_k-\mathbf d_k$.

We first control the averaged exact direction norm
$D:=\frac1K\sum_{k=0}^{K-1}\mathbb E\|\mathbf  d_k\|^2$,
and then use a separate descent argument on \(g\) to control
$\frac1K\sum_{k=0}^{K-1}\mathbb E\|\nabla g(\mathbf  x_k)\|^2$.

By Lemma~\ref{lem:psi_lip}, with
$p_1=v_f^k, q_1=v_g^k,
p_2=\nabla f(\mathbf x_k), q_2=\nabla g(\mathbf x_k)$,
and using Assumption \ref{assump:gradf-g-lip}, we have
\[
\|e_k\|^2
\le
2(L_\Psi^f)^2\|v_f^k-\nabla f(\mathbf  x_k)\|^2
+
2(L_\Psi^g)^2\|v_g^k-\nabla g(\mathbf  x_k)\|^2.
\]
Define
$\bar E^2
:=
\frac1K\sum_{k=0}^{K-1}\mathbb E\|e_k\|^2$.
Then, by the definition of \(\bar\sigma_f^2\) and \(\bar\sigma_g^2\),
\[
\bar E^2
\le
2(L_\Psi^f)^2\bar\sigma_f^2
+
2(L_\Psi^g)^2\bar\sigma_g^2.
\]

We now derive a Lyapunov descent bound. Since
\(
\Phi(\mathbf  x):=f(\mathbf  x)+c g(\mathbf  x)
\)
has \(L_\Phi=L_f+cL_g\)--Lipschitz gradient, and since
$\mathbf  x_{k+1}=\mathbf  x_k-\eta\tilde {\mathbf d}_k=\mathbf x_k-\eta(\mathbf d_k+e_k)$,
we have
\[
\Phi_k-\Phi_{k+1}
\ge
\eta\nabla\Phi(\mathbf  x_k)^\top \mathbf d_k
+
\eta\nabla\Phi(\mathbf  x_k)^\top e_k
-
\frac{L_\Phi\eta^2}{2}\|\tilde {\mathbf d}_k\|^2.
\]
Using
$\nabla\Phi(\mathbf x_k)^\top \mathbf d_k
=
\|\mathbf  d_k\|^2
+
(c-\hat\lambda_k)\nabla g(\mathbf  x_k)^\top \mathbf  d_k$,
and the cross-term bound from the proof of Theorem~\ref{thm:pr-sdbpg},
\[
(c-\hat\lambda_k)\nabla g(\mathbf  x_k)^\top \mathbf  d_k
\ge
-\delta_c,
\]
where
\(
\delta_c
:=
(C_\Lambda+1-\mu\beta)\frac{G_f^2}{4\mu\beta}
=
\mathcal{O}(\epsilon),
\)
we obtain
\[
\nabla\Phi(\mathbf  x_k)^\top \mathbf  d_k
\ge
\|\mathbf  d_k\|^2-\delta_c.
\]
Moreover, since
\(
\|\nabla\Phi(\mathbf  x_k)\|\le G_\Phi
\)
and
\(
\|\tilde {\mathbf d}_k\|^2
=
\|\mathbf d_k+e_k\|^2
\le
2\|\mathbf  d_k\|^2+2\|e_k\|^2,
\)
we get
\[
\Phi_k-\Phi_{k+1}
\ge
\eta\|\mathbf d_k\|^2
-\eta\delta_c
-\eta G_\Phi\|e_k\|
-
L_\Phi\eta^2\|\mathbf d_k\|^2
-
L_\Phi\eta^2\|e_k\|^2.
\]
Since \(\eta\le 1/(2L_\Phi)\), this implies
\[
\Phi_k-\Phi_{k+1}
\ge
\frac{\eta}{2}\|\mathbf d_k\|^2
-
\eta\delta_c
-
\eta G_\Phi\|e_k\|
-
L_\Phi\eta^2\|e_k\|^2.
\]
Taking expectation, summing over \(k=0,\ldots,K-1\), and dividing by \(K\), we obtain
\[
D
\le
\frac{2\Delta_\Phi}{\eta K}
+
2\delta_c
+
2G_\Phi\sqrt{\bar E^2}
+
2L_\Phi\eta\bar E^2.
\]
In addition, if we define
$M
:=
\frac1K\sum_{k=0}^{K-1}\mathbb E\|\tilde {\mathbf  d}_k\|^2$,
then,
$M
\le
2D+2\bar E^2$.

Next, we control \(\bar E^2\) using the STORM tracking bound. By Lemma~\ref{lem:storm_tracking}, the choices of \(N_f,N_g\), and the choice of \(K\), the transient term satisfies
\[
\frac{\nu_h^3}{\mathcal L_h\eta K N_h}
\le
\frac{\mathcal L_h\eta\nu_h}{N_h},
\qquad h\in\{f,g\}.
\]
Therefore, for sufficiently large constants \(C_1,C_2\), Lemma~\ref{lem:storm_tracking} gives
\[
\bar E^2
\le
\frac{C\epsilon^2}{G_\Phi^2}(1+M),
\]
for an absolute constant \(C>0\). Let
$a:=\frac{C\epsilon^2}{G_\Phi^2}$.
For sufficiently small \(\epsilon\), we have \(a\le 1/8\). Since
$M\le 2D+2\bar E^2$,
we have
$\bar E^2
\le
a(1+2D+2\bar E^2)$.

Rearranging and using \(a\le 1/8\) yields
\[
\bar E^2
\le
2a(1+2D).
\]
Consequently, there exists an absolute constant \(C'>0\) such that
\[
G_\Phi\sqrt{\bar E^2}
\le
C'\epsilon\sqrt{1+D}.
\]

Substituting this bound into the inequality for \(D\), and using
$\frac{2\Delta_\Phi}{\eta K}=\mathcal{O}(\epsilon)$ and $
\delta_c=\mathcal{O}(\epsilon)$,
we obtain
\[
D
\le
\mathcal{O}(\epsilon)
+
C'\epsilon\sqrt{1+D}
+
2L_\Phi\eta\bar E^2.
\]
Since
$L_\Phi\eta=\mathcal{O}(\epsilon)$
and
$\bar E^2\le 2a(1+2D)$,
the last term is bounded by \(\mathcal{O}(\epsilon)(1+D)\). Hence,
\[
D
\le
\mathcal{O}(\epsilon)
+
\mathcal{O}(\epsilon)\sqrt{1+D}
+
\mathcal{O}(\epsilon)(1+D).
\]
Using \(\sqrt{1+D}\le 1+D\), we get
$D
\le
\mathcal{O}(\epsilon)+\mathcal{O}(\epsilon)D$.

For sufficiently small \(\epsilon\), the \(\mathcal{O}(\epsilon)D\) term can be absorbed into the left-hand side. Therefore,
$D=\mathcal{O}(\epsilon)$.
It follows that
$\bar E^2
=
\mathcal{O}\left(\frac{\epsilon^2}{G_\Phi^2}\right)$ and $
M
=
\mathcal{O}(\epsilon)+\mathcal{O}\left(\frac{\epsilon^2}{G_\Phi^2}\right)$.
In particular,
\[
\frac1K\sum_{k=0}^{K-1}\mathbb E\|\mathbf d_k\|^2
=
D
\le
\mathcal{O}(\epsilon).
\]

It remains to control the lower-level stationarity measure. By \(L_g\)-smoothness of \(g\),
\[
g(\mathbf  x_k)-g(\mathbf  x_{k+1})
\ge
\eta\nabla g(\mathbf  x_k)^\top \mathbf d_k
+
\eta\nabla g(\mathbf  x_k)^\top e_k
-
\frac{L_g\eta^2}{2}\|\tilde {\mathbf  d}_k\|^2.
\]
Using Lemma~\ref{lem:G1}, and using
\[
\nabla g(\mathbf  x_k)^\top \mathbf d_k
\ge
\tilde\beta\|\nabla g(\mathbf x_k)\|^2-\Delta_0,
\]
where
\[
\tilde\beta:=\frac{\mu\beta}{2(1+\mu)},
\qquad
\Delta_0:=\frac{(1+\mu)G_f^2}{2\mu\beta}.
\]
and,
\[
|\nabla g(\mathbf x_k)^\top e_k|
\le
C_g\|e_k\|.
\]
Taking expectation, summing over \(k=0,\ldots,K-1\), and dividing by \(K\tilde\beta\), we obtain
\[
\frac1K\sum_{k=0}^{K-1}
\mathbb E\|\nabla g(\mathbf x_k)\|^2
\le
\frac{\Delta_g}{\eta K\tilde\beta}
+
\frac{\Delta_0}{\tilde\beta}
+
\frac{C_g\sqrt{\bar E^2}}{\tilde\beta}
+
\frac{L_g\eta}{2\tilde\beta}M.
\]

We now bound each term on the right-hand side. By the choice of \(K\),
$\frac{\Delta_g}{\eta K\tilde\beta}
\le
\mathcal{O}(\epsilon)$.
Moreover, since \(\mu=1\), we have
$\tilde\beta=\frac{\beta}{4}=\frac{G_f^3}{\epsilon}$, $
\Delta_0=\frac{G_f^2}{\beta}=\mathcal{O}(\epsilon)$,
and hence
\(
\frac{\Delta_0}{\tilde\beta}
=
\mathcal{O}(\epsilon^2).
\)
Furthermore, since
\(
\sqrt{\bar E^2}
=
\mathcal{O}\left(\frac{\epsilon}{G_\Phi}\right),
\)
we get
\(
\frac{C_g\sqrt{\bar E^2}}{\tilde\beta}
\le
\mathcal{O}(\epsilon).
\)
Finally,
$\frac{L_g\eta}{\tilde\beta}
=
\mathcal{O}(\epsilon^2)$ and $
M=\mathcal{O}(\epsilon)+\mathcal{O}\left(\frac{\epsilon^2}{G_\Phi^2}\right)$.
Therefore,
$\frac{L_g\eta}{2\tilde\beta}M
\le
\mathcal{O}(\epsilon)$.

Combining these bounds gives 
$\frac1K\sum_{k=0}^{K-1}
\mathbb E\|\nabla g(\mathbf x_k)\|^2
\le
\mathcal{O}(\epsilon)$. 

Sampling \(k^*\sim\mathrm{Unif}\{0,\ldots,K-1\}\), we have
\[
\mathbb E\|\mathbf  d_{k^*}\|^2
=
\frac1K\sum_{k=0}^{K-1}\mathbb E\|\mathbf  d_k\|^2
\le
\mathcal{O}(\epsilon),\quad
\mathbb E\|\nabla g(\mathbf x_{k^*})\|^2
=
\frac1K\sum_{k=0}^{K-1}\mathbb E\|\nabla g(\mathbf x_k)\|^2
\le
\mathcal{O}(\epsilon).
\]

It remains to verify the claimed sample complexities. Under the chosen parameters,
$c=\mathcal{O}(\epsilon^{-1}),
L_\Phi=\mathcal{O}(\epsilon^{-1}),
\Delta_\Phi\le \Delta_f+c\Delta_g$.
Therefore,
$K=\mathcal{O}(\epsilon^{-4})$
up to problem-dependent constants. Moreover,
$L_\Psi^f=\mathcal{O}(1),
L_\Psi^g=\mathcal{O}(\epsilon^{-1}),
G_\Phi=\mathcal{O}(\epsilon^{-1}),
\eta=\mathcal{O}(\epsilon^2)$.
Thus,
$N_f=\mathcal{O}(\epsilon^{-2})$ and $
N_g=\mathcal{O}(\epsilon^{-4})$.
Consequently, the total sample complexities are 
$K N_f=\mathcal{O}(\epsilon^{-6})$ and $K N_g=\mathcal{O}(\epsilon^{-8})$.
\end{proof}

\vspace{-5mm}
\section{Detailed Experimental Setup}
\label{app:exp_details}

We provide complete details of the experimental setup used in the numerical evaluation, including datasets, model architectures, loss functions, and implementation specifics.

We consider a stochastic nonconvex bilevel formulation arising in large language model (LLM) unlearning. The problem is given by
\begin{align*}
    \min_{\theta \in \Theta} \quad & f(\theta) := \mathbb{E}_{\xi_f}[\ell_{\mathrm{ret}}(y \mid x; \theta)]
    \quad \text{s.t.} \quad 
    \Theta = \arg\min_{\theta} \; g(\theta) := \mathbb{E}_{\xi_g}[\ell_{\mathrm{for}}(y \mid x; \theta)],
\end{align*}
where $\xi_f$ and $\xi_g$ are sampled from the retain and forget datasets, respectively. The retain objective $\ell_{\mathrm{ret}}$ is implemented as the standard cross-entropy loss ($\ell_{ret}(y \mid x; \theta) = -\log \pi(y \mid x; \theta)$). The forget objective is instantiated using the negative preference optimization (NPO) loss defined as
\[
\ell_{\mathrm{NPO},\beta'}(y \mid x; \theta)
=
\frac{2}{\beta'}\log\!\left(1+\left(\frac{\pi(y \mid x; \theta)}{\pi(y \mid x; \theta_0)}\right)^{\beta'}\right),
\]
where $\theta_0$ denotes a fixed reference (pre-trained) model and $\beta' \ge 0$ controls the strength of the forgetting penalty. Both objectives are nonconvex due to the underlying neural network architecture.

All experiments are conducted on Google Colab using cloud-based NVIDIA G4 GPUs. The code is developed from a personal computer running macOS Sonoma on an Apple M2 Pro chip with 8GB memory. We conduct experiments on two benchmark public datasets obtained through the Hugging Face platform. The first dataset is TOFU\footnote{\url{https://huggingface.co/datasets/locuslab/TOFU}}\citep{maini2024tofu} (Task of Fictitious Unlearning), which contains $4000$ question-answer pairs generated from biographies of $200$ fictitious authors. Each sample is of the form $(x, y)$, where $x$ is a natural language query and $y$ is the corresponding answer. We use the \texttt{forget05} configuration, where $5\%$ of the data is designated as the forget set, resulting in $200$ forget samples and $3800$ retain samples.

The second dataset is MUSE-News\footnote{\url{https://huggingface.co/datasets/muse-bench/MUSE-News}}\citep{shi2024muse} (Machine Unlearning Six-way Evaluation), which consists of news-based question-answer pairs designed for evaluating unlearning performance. We use the \texttt{knowmem} subset, which contains two disjoint splits: \texttt{forget\_qa} with $100$ samples and \texttt{retain\_qa} with $100$ samples.

We use publicly available pre-trained models obtained through the Hugging Face platform, which are accessed under their corresponding Hugging Face license and usage agreement. Access to these models requires acceptance of the model provider’s terms of use. For the TOFU dataset, we use the instruction-tuned model \texttt{Llama-3.2-1B-Instruct}\footnote{\url{https://huggingface.co/meta-llama/Llama-3.2-1B-Instruct}}\citep{dorna2025openunlearning}, a $1.23$B-parameter autoregressive transformer. For the MUSE-News dataset, we use \texttt{Llama-2-7B}\footnote{\url{https://huggingface.co/meta-llama/Llama-2-7b}}\citep{shi2024muse}, a $7$B-parameter autoregressive transformer trained on large-scale corpora. We employ Low-Rank Adaptation (LoRA) to fine-tune the pre-trained model, updating only a small set of low-rank adaptation parameters while keeping the base model frozen, which substantially reduces the number of trainable parameters and improves computational efficiency.

In the TOFU experiment, the stochastic implementations (SDBPG, PR-SDBPG, VR-PR-SDBPG) use mini-batches of size $4$ from both the retain and forget datasets, resulting in $8$ samples per iteration. The algorithm is run for $1500$ iterations (total $12{,}000$ samples). The deterministic variant (DDBPG) uses a full batch of size $200$ from the forget dataset and a fixed subset of size $400$ (out of $3800$) from the retain dataset, resulting in $600$ samples per iteration, and is run for $60$ iterations. A fully deterministic implementation using the entire retain dataset ($3800$ samples) would be computationally prohibitive in this setting.

In the MUSE-News experiment, the stochastic methods use mini-batches of size $2$ from both datasets, resulting in $4$ samples per iteration, and is run for $1500$ iterations (total $6000$ samples). The deterministic method uses full-batch gradients over all $100$ retain, and $100$ forget samples, resulting in $200$ samples per iteration, and is run for $80$ iterations. Although full-batch gradients are feasible in this case, they remain significantly more expensive per iteration.


The hyperparameters in both experiments are set to
$\eta_k=\eta = 5 \times 10^{-6}, 
\beta_k=\beta = 0.2, 
\gamma_k=\gamma = 10^{-4}$, $\mu=1$, $\alpha_{STORM}=0.2$, $\beta' = 0.05$, which are tuned to achieve the best empirical performance for the considered methods.


\vspace{-5mm}

\paragraph{Additional comparison versus iterations.}
For completeness, we also report the convergence behavior of the methods as a function of the number of iterations (see Fig.~\ref{fig:iter}). While the deterministic method appears to converge in fewer iterations, this observation is expected and does not reflect a true computational advantage. In particular, each deterministic iteration uses substantially larger batches compared to the stochastic methods ($600$ samples vs.\ $8$ samples in TOFU, and 200 samples vs.\ $4$ samples in MUSE-News). As a result, iteration counts are not directly comparable across methods. This further motivates our primary evaluation based on the number of processed samples and wall-clock time, which provides a more meaningful measure of computational efficiency.

Finally, we would like to highlight that in this experiment, lower-level stationarity is achieved by the end of the algorithm run. This indicates that the iterates enter the bad region $\widetilde{R}_k$ only for a small fraction of iterations, during which upper-level stationarity is also attained; hence, the observed behavior is consistent with Assumption~\ref{assump:rare-visit}. Therefore, the improved performance of SDBPG is consistent with the theoretical guarantee in Theorem~\ref{thm:theorem2}.

\begin{figure}[t]
\centering

\begin{subfigure}{0.24\linewidth}
    \centering
    \includegraphics[width=\linewidth]{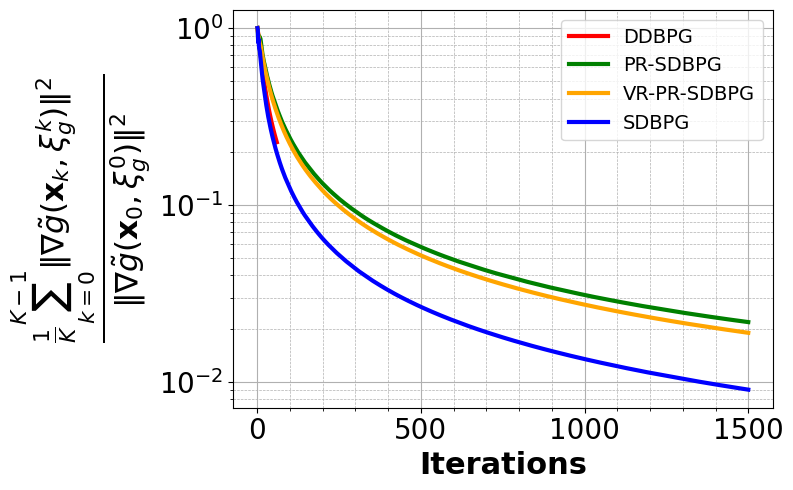}
    \caption{}
\end{subfigure}
\hfill
\begin{subfigure}{0.24\linewidth}
    \centering
    \includegraphics[width=\linewidth]{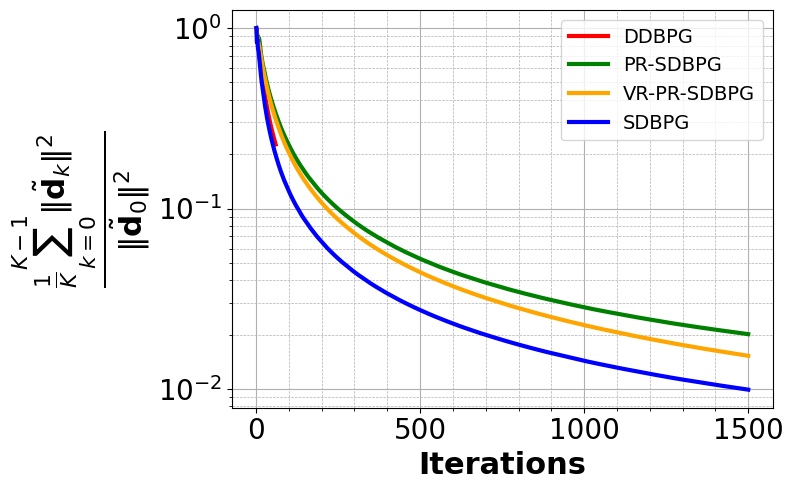}
    \caption{}
\end{subfigure}
\begin{subfigure}{0.24\linewidth}
    \centering
    \includegraphics[width=\linewidth]{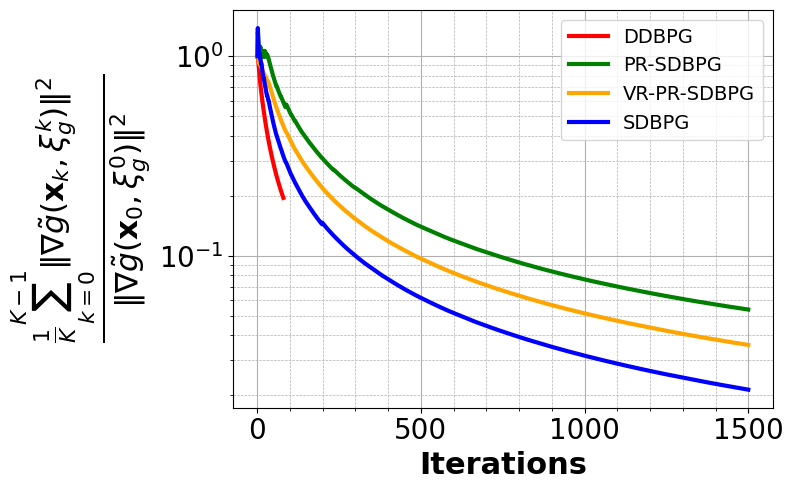}
    \caption{}
\end{subfigure}
\hfill
\begin{subfigure}{0.24\linewidth}
    \centering
    \includegraphics[width=\linewidth]{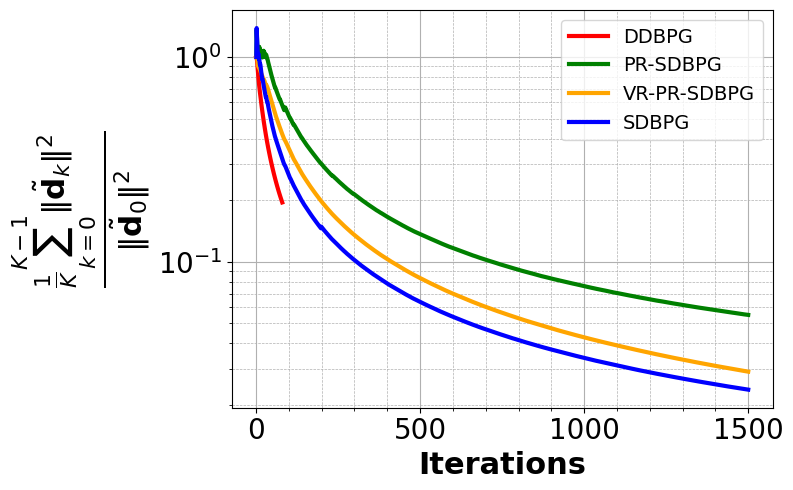}
    \caption{}
\end{subfigure}
\vspace{-2mm}
\caption{Comparison of SDBPG (blue), PR-SDBPG (green), and VR-PR-SDBPG (orange) with the deterministic counterpart DDBPG (red) versus iterations on
TOFU (a-b) and MUSE-News (c-d).
}
\label{fig:iter}
\end{figure}

\end{document}